\newcommand{\nips}[1]{foo}
\newcommand{\notnips}[1]{bar}
\renewcommand{\nips}[1]{#1}%
\renewcommand{\notnips}[1]{}%
\renewcommand{\nips}[1]{}%
\renewcommand{\notnips}[1]{#1}%
\long\def\@makecaption#1#2{
  \vskip 0.8ex
  \setbox\@tempboxa\hbox{\small {\bf #1:} #2}
  \parindent 1.5em  
  \dimen0=\hsize
  \advance\dimen0 by -3em
  \ifdim \wd\@tempboxa >\dimen0
  \hbox to \hsize{
    \parindent 0em
    \hfil 
    \parbox{\dimen0}{\def\baselinestretch{0.96}\small
      {\bf #1.} #2
    } 
    \hfil}
  \else \hbox to \hsize{\hfil \box\@tempboxa \hfil}
  \fi
}
\def\usenewertable{1}
\title{Lower Bounds for Finding Stationary Points II:\\First-Order Methods}
\author{Yair Carmon ~~~ John C.\ Duchi ~~~ Oliver Hinder ~~~ Aaron Sidford \\
  \texttt{\{\href{mailto:yairc@stanford.edu}{yairc},
    \href{mailto:jduchi@stanford.edu}{jduchi},
    \href{mailto:ohinder@stanford.edu}{ohinder},
    \href{mailto:sidford@stanford.edu}{sidford}\}@stanford.edu}}
\date{}
\providecommand{\compactfunc}{\Psi}
\begin{document}

\maketitle

\begin{abstract}
 We establish lower bounds on the complexity of finding
 $\epsilon$-stationary points of smooth, non-convex
 high-dimensional functions using first-order methods. We prove 
 that deterministic first-order methods, even applied to arbitrarily smooth
 functions, cannot achieve convergence rates in $\epsilon$ better than
 $\epsilon^{-8/5}$, which is within $\epsilon^{-1/15}\log\frac{1}{\epsilon}$ of 
 the best known rate for such methods. Moreover, for functions with Lipschitz 
 first and second
 derivatives, we prove no deterministic first-order method can achieve 
 convergence rates better than
 $\epsilon^{-12/7}$, while $\epsilon^{-2}$ is a lower bound for
 functions with only Lipschitz gradient. For \emph{convex} functions with 
 Lipschitz gradient, accelerated gradient descent achieves the rate 
 $\epsilon^{-1}\log\frac{1}{\epsilon}$, showing that finding stationary points 
 is  easier given convexity.
\end{abstract}

%
\section{Introduction}

We study the oracle complexity of finding approximate
stationary points of a smooth function $f : \R^d \to \R$, that is,
a point $x$ such that
\begin{equation}
  \norm{\nabla f(x)} \le \epsilon.
  \label{eqn:stationarity}
\end{equation}
In Part I of this series~\cite{NclbPartI}, we establish the complexity
of finding an $\epsilon$-stationary point~\eqref{eqn:stationarity} for
algorithms that, at a query point $x$, have access
to all derivatives of $f$. In contrast, in this paper we focus on
\emph{first-order methods}, which only query function values and gradients.

First-order methods are important in large-scale optimization for many
reasons. Perhaps the two most salient are that each iteration is often
inexpensive, and that on many problems, the number of iterations grows 
slowly (or not at all) 
with the problem dimension $d$. %
 From a theoretical
perspective, the latter property is captured by %
\emph{dimension-free} convergence rates, %
where the worst case iteration count
depends polynomially on the desired accuracy and measures of function
regularity but
has no explicit dependence on $d$. 
In non-convex optimization problems, regularity often comes
by assuming bounded function value at the initial point $x\ind{0}$, i.e.\
$f(x\ind{0}) - \inf_x f(x) \le \DeltaF$ for some $\DeltaF >0$, and that
$\grad f$ is $\SmGrad$-Lipschitz continuous. Under these 
conditions, classical gradient descent finds an
$\epsilon$-stationary point in $2\SmGrad\DeltaF
\epsilon^{-2}$ iterations~\cite{Nesterov04}, a dimension-free guarantee.

Developing first-order methods for finding stationary points of non-convex
functions with improved dimension-free rates of convergence is an area of
active research~\cite{CarmonDuHiSi16, AgarwalAlBuHaMa17, CarmonDuHiSi17,
  Allen17b}.  Under the additional assumption of Lipschitz second
derivatives, we~\cite{CarmonDuHiSi16} and \citet{AgarwalAlBuHaMa17} propose
randomized first-order methods with nearly dimension free rate
$\epsilon^{-7/4} \log\frac{d}{\epsilon}$ (ignoring other problem-dependant
constants). In a later paper~\cite{CarmonDuHiSi17}, we propose a
deterministic accelerated gradient-based method with complexity
$\epsilon^{-7/4} \log\frac{1}{\epsilon}$, and under the further assumption of
that $f$ has Lipschitz third derivatives, we show the same 
method attains rates of $\epsilon^{-5/3} \log\frac{1}{\epsilon}$.  This
raises the main question we address in this paper: how much further can this
$\epsilon$ dependence can be improved, and what Lipschitz continuity
assumptions are necessary?

\ifdefined\usenewtable
\newcolumntype{L}[1]{>{\raggedright\arraybackslash}p{#1}}
\begin{table}[t]
  \begin{center}
    \vspace{6pt}
    \begin{tabular}{llll}
      \toprule
      $f$ has Lipschitz & Upper bound & Lower bound & Gap  \\ 
      \midrule
      $p$th order derivative
      & $\DeltaF \epsilon^{-(p+1)/p}$ \cite{BirginGaMaSaTo17}
      & $\DeltaF \epsilon^{-({p+1})/{p}}$ (Part I~\cite{NclbPartI})
      & $O(1)$ \\ 
      gradient and Hessian
      &  $\DeltaF \epsilon^{-7/4}\log\frac{1}{\epsilon}$ \cite{CarmonDuHiSi17}
      & $\DeltaF \epsilon^{-12/7}$ (Thm.~\ref{thm:final})
      & $\epsilon^{-1/28}$ \\
      $q$th-order deriv.\ for $q\le p$, $p\ge3$
      &  $\DeltaF \epsilon^{-5/3}\log\frac{1}{\epsilon}$
      \cite{CarmonDuHiSi17}
      & $\DeltaF \epsilon^{-8/5}$ (Thm.~\ref{thm:final})
      & $\epsilon^{-1/15}$ \\
      gradient + $f$ convex
      & $\sqrt{\DeltaF} \epsilon^{-1} \log\frac{1}{\epsilon}$
      (Prop.~\ref{prop:complexity-stationary-convex})
      & $\sqrt{\DeltaF} \epsilon^{-1}$ (Thm.~\ref{thm:convex-lb})
      & $\log\frac{1}{\epsilon}$ \\ 
      \bottomrule
    \end{tabular} 
  \end{center}
  \caption{\label{table:summary} The number of iterations required to find
    $\epsilon$-stationary points of high dimensional functions in terms of
    $\epsilon$ and the function value gap $\DeltaF \defeq f(x\ind{0}) -
    \inf_x f(x)$.  In the first row, the upper bounds are achieved by
    deterministic $p$th-order methods and the lower bounds apply to all
    randomized methods of arbitrary order. In the other rows, deterministic
    algorithms achieve the upper bounds and the lower bounds apply to
    deterministic first-order methods. The gap column ignores logarithmic
    factors.}
\end{table}
\fi
\ifdefined\usenewertable
\begin{table}[t]
  \begin{center}
    \begin{tabular}{lllll}
      \toprule
      Oracle & $f$ has Lipschitz & Upper bound & Lower bound & Gap  \\ 
      \midrule
      Gen. & $p$th-order derivative &  $O\left(\epsilon^{-(p+1)/p}\right)$ 
      \cite{BirginGaMaSaTo17} & $\Omega\left(\epsilon^{-({p+1})/{p}}\right)$ 
      \hyperlink{LinkPartI}{Part I}  & $O(1)$ \\ 
      F.O. & gradient and Hessian & 
      $\Otil{\epsilon^{-7/4}}$ \cite{CarmonDuHiSi17}
      & $\Omega\left(\epsilon^{-12/7}\right)$ Thm.~\ref{thm:final}
      & $\Otil{\epsilon^{-1/28}}$ \\
      F.O. & $q$th derivative $\forall q\le p$, $p\ge3$ %
      &  $\Otil{\epsilon^{-5/3}}$ \cite{CarmonDuHiSi17}
      & $\Omega\left(\epsilon^{-8/5}\right)$ Thm.~\ref{thm:final}
      & $\Otil{\epsilon^{-1/15}}$ \\
      F.O. & gradient + $f$ convex
      & $\Otil{\epsilon^{-1}}$ 
      Prop.~\ref{prop:complexity-stationary-convex}
      & $\Omega\left(\epsilon^{-1}\right)$ Thm.~\ref{thm:convex-lb}
      & $\Otil{1}$ 
      \\ 
      \bottomrule
    \end{tabular} 
    \vspace{6pt}

  \caption{\label{table:summary} The number of iterations required to find
    $\epsilon$-stationary points of high dimensional functions $f$, where 
    $f(x\ind{0})-\inf_x f(x) \le O(1)$. The first
    column indexes the type of oracle access: general (all derivatives) or
    first-order (function value and gradient). In the first row, deterministic 
    $p$th-order methods achieve the upper bounds, and the lower
    bounds apply to all randomized methods of arbitrary order. In the other
    rows, the lower bounds apply to all deterministic first-order methods, and 
    such methods achieve the upper bounds.}
  \end{center}
\end{table}
\else
\newcolumntype{L}[1]{>{\raggedright\arraybackslash}p{#1}}
\begin{table}[t]
  \caption{\label{table:summary}
    The number of iterations required to find $\epsilon$-stationary
    points of high dimensional functions}
  \begin{center}
    \vspace{6pt}
    \begin{tabular}{lllll}
      \toprule
      & $f$ has Lipschitz & Upper bound & Lower bound & Gap  \\ 
      \midrule
      A & $p$th order derivative &  $O\left(\epsilon^{-(p+1)/p}\right)$ 
      \cite{BirginGaMaSaTo17} & $\Omega\left(\epsilon^{-({p+1})/{p}}\right)$ 
      Part I \cite{NclbPartI}  & $O(1)$ \\ 
      B & gradient and Hessian &  $\Otil{\epsilon^{-7/4}}$ \cite{CarmonDuHiSi17} & $\Omega\left(\epsilon^{-12/7}\right)$ Thm.~\ref{thm:final} & $\Otil{\epsilon^{-1/28}}$ \\
      C & $q$th derivative $\forall q\le p$, $p\ge3$ %
      &  $\Otil{\epsilon^{-5/3}}$ \cite{CarmonDuHiSi17} & $\Omega\left(\epsilon^{-8/5}\right)$ Thm.~\ref{thm:final} & $\Otil{\epsilon^{-1/15}}$ \\
      D &gradient + $f$ convex & $\Otil{\epsilon^{-1}}$ 
      Prop.~\ref{prop:complexity-stationary-convex}~~ & 
      $\Omega\left(\epsilon^{-1}\right)$ Thm.~\ref{thm:convex-lb} & $\Otil{1}$ 
      \\ 
      \bottomrule
    \end{tabular} 
    \vspace{6pt}

  \parbox{0.95\textwidth}{
    \footnotesize{\textbf{Notes}~~ The bounds apply for functions $f$ with
      $f(x\ind{0}) - \inf_x f(x) \le O(1)$.  In row A the upper bounds are
      achieved by deterministic $p$th-order methods and the lower bounds 
      apply to all randomized methods of arbitrary order. In rows B--D the 
      upper (lower) bounds are achieved by (apply to all) deterministic 
      first-order methods.}}
  \vspace{-12pt}
  \end{center}
\end{table}
\fi

\subsection{Our contributions}

In Table~\ref{table:summary} we summarize our results, along with
corresponding known upper bounds. We establish lower bounds on the
\emph{worst-case oracle complexity} of finding $\epsilon$-stationary points, 
where algorithms may access $f$ only through queries to an information 
oracle, that returns the value and some
number of (or potentially all) derivatives of $f$ at the queried point.
A lower bound $T_\epsilon$ means that for every algorithm $\alg$, there
exists a function $f$ in the allowed function class (\eg functions
with $f(x\ind{0})-\inf_x f(x) \le \DeltaF$ and
$\SmGrad$-Lipschitz gradient) for which $\alg$ requires at least
$T_\epsilon$ oracle queries before
returning an $\epsilon$-stationary point of $f$.

In Part I~\cite{NclbPartI} of this series we prove that no algorithm, even
one given all derivatives of $f$ at each iteration, can improve on the
$\epsilon^{-2}$ rate of gradient descent for the class of functions with
bounded initial value and Lipschitz continuous gradient. Therefore, in
distinction with the convex case, acceleration of gradient descent for
non-convex optimization~\cite{CarmonDuHiSi17} fundamentally depends on
higher-order smoothness assumptions. We further show that, for the class of
functions with $p$th order Lipschitz derivatives, no method can improve the
rate $\epsilon^{-(p+1)/p}$ achieved by a $p$th-order
method~\cite{BirginGaMaSaTo17}. However, this does not get at the crux of
the issue we consider here---what is the best possible rate for
\emph{first-order} methods, given that higher-order derivatives are
Lipschitz?

In this paper we show that the $\epsilon$-dependencies we establish in our
work~\cite{CarmonDuHiSi17} are almost tight. More precisely, consider the
function class with $\Smp$-Lipschitz derivatives for all $q \in \{1, \dots,
p \}$, where $p \in \N$; for this class there does not exist a deterministic
first-order algorithm with iteration complexity 
better than $\epsilon^{-8/5}$. If $p = 2$
this complexity lower bound strengthens to $\epsilon^{-12/7}$. 
In the following diagram, we compare the exponents of $1/\epsilon$ in our 
lower bounds and known upper bounds (smaller is better).

\newcommand{\ticksize}{5 pt}
\newcommand{\highlightsize}{3 pt}
\newcommand{\bracketsize}{0.004}
\newcommand{\diagramfrac}[2]{$\displaystyle\frac{#1}{#2}$}
\newcommand{\diagramnum}[1]{$\displaystyle #1$}
\begin{center}
  \begin{tikzpicture}[x=1.6\textwidth,y=1.6\textwidth]

  \draw (3/2,0) -- (2,0);

  \draw[fill=violet!50, fill opacity=1, draw opacity=0] (8/5,-\highlightsize) 
  rectangle (5/3, \highlightsize);
  \draw[fill=teal!50, fill opacity=1, draw opacity=0] (12/7,-\highlightsize) 
  rectangle (7/4, \highlightsize);
  \filldraw (3/2,0) circle (2pt);
  \filldraw (2,0) circle (2pt);

  \foreach \x in {3/2, 8/5, 5/3, 12/7, 7/4, 2}
  \draw (\x ,\ticksize) -- (\x ,-\ticksize);

  \draw (3/2,0) node[below=\ticksize] {\diagramfrac{3}{2}};
  \draw (3/2,0) node[above=\ticksize] 
  {\footnotesize\parbox{\widthof{Newton's method}}{\centering
             cubic-regularized Newton's method \\ $p=2$}};
  \draw (8/5,0) node[below=\ticksize] {\diagramfrac{8}{5}};
  \draw (8/5,\ticksize)--(8/5+\bracketsize,\ticksize);
  \draw (8/5,-\ticksize)--(8/5+\bracketsize,-\ticksize);
  \draw (8/10+5/6,0) node[above=\ticksize, color=violet] 
  {\footnotesize\parbox{\widthof{Newton's method}}{\centering
      first-order methods \\ $p\ge 3$}};
  \draw (5/3,0) node[below=\ticksize] {\diagramfrac{5}{3}};
  \draw (5/3,\ticksize)--(5/3-\bracketsize,\ticksize);
  \draw (5/3,-\ticksize)--(5/3-\bracketsize,-\ticksize);
  \draw (12/7,0) node[below=\ticksize] {\diagramfrac{12}{7}};
  \draw (12/7,\ticksize)--(12/7+\bracketsize,\ticksize);
  \draw (12/7,-\ticksize)--(12/7+\bracketsize,-\ticksize);
  \draw (12/14+7/8,0) node[above=\ticksize,color=teal] 
  {\footnotesize\parbox{\widthof{Newton's method}}{\centering
      first-order methods \\ $p=2$}};
  \draw (7/4,0) node[below=\ticksize] {\diagramfrac{7}{4}};
  \draw (7/4,\ticksize)--(7/4-\bracketsize,\ticksize);
  \draw (7/4,-\ticksize)--(7/4-\bracketsize,-\ticksize);
  \draw (2,0) node[below=\ticksize] {\diagramnum{2}};
  \draw (2,0) node[above=\ticksize] 
  {\footnotesize\parbox{\widthof{gradient}}{\centering
      gradient descent \\ $p=1$}};
  \end{tikzpicture}
\end{center}

\noindent
Thus, we establish two separations. First, \emph{no} deterministic first-order 
method can 
achieve the rate of
convergence $\epsilon^{-3/2}$ of Newton's method. Second, the rate 
$\epsilon^{-5/3} \log
\frac{1}{\epsilon}$ we achieve~\cite{CarmonDuHiSi17} \emph{requires} the
assumption of Lipschitz third derivatives, as first-order methods assuming 
only Lipschitz Hessian must compute at least $\epsilon^{-12/7}$ function
values and gradients to find an $\epsilon$-stationary point.
We also show that the optimal rate for finding $\epsilon$-stationary points
of \emph{convex} functions with bounded initial value (\ie $f(x\ind{0}) -
\inf_x f(x) \le \DeltaF$) and $\SmGrad$-Lipschitz gradient is
$\wt{\Theta}(\sqrt{\SmGrad\DeltaF}\epsilon^{-1})$.\footnote{
  Given a bound $\norms{x\ind{0}-x\opt} \le D$
  where $x\opt \in \argmin f$, as is standard for
  convex optimization, the optimal rate is
  $\wt{\Theta}(\sqrt{\SmGrad D}\epsilon^{-1/2})$~\cite{Nesterov12b}. The two
  rates are not directly comparable.
}
Finding stationary points is thus fundamentally easier for convex
functions.

The starting point of our development is
Nesterov's~\cite[\S~2.1.2]{Nesterov04} ``worst function in the world,''
\begin{equation}
  \fNesterov(x) \defeq \frac{1}{2}(x_1 - 1)^2 
  + \half \sum_{i=1}^{d-1} (x_i - x_{i+1})^2,
  \label{eq:nesterov-chain}
\end{equation}
which is instrumental in proving lower bounds for convex
optimization~\cite{Nesterov04,WoodworthSr16,ArjevaniShSh17} due to its
``chain-like'' structure.
We
establish our $\epsilon^{-1}$ lower bound for finding stationary
points of convex functions by making a minor modification to the
construction~\eqref{eq:nesterov-chain}. To prove our larger lower bounds for
non-convex functions, we augment $\fNesterov$ with a non-convex separable
function $\sum_{i=1}^d \Upsilon(x_i)$, with $\Upsilon:\R\to\R$ carefully
chosen to render Nesterov's ``worst function'' even worse. 

\paragraph{Paper organization}
Throughout, we use \textsc{Pi}.$k$ to reference an item $k$ of Part I of
this sequence~\cite{NclbPartI}, as we build off of many ideas
there. In Section~\ref{sec:recap}
we briefly summarize our framework
(Sections~\exref{sec:prelims}
and~\exref{sec:anatomy}). Section~\ref{sec:convex} begins the new analysis
and contains lower bounds for finding stationary points of \emph{convex}
functions. In Section~\ref{sec:firstorder_construction} we construct our hard
non-convex instance, while in Section~\ref{sec:firstorder-lb} we
use this function
to establish our main result: a lower bound on the complexity of finding
stationary points using deterministic first-order
methods. %
In Sections~\ref{sec:discussion} we discuss some difficulties in sharpening or extending our lower bounds. Section~\ref{sec:concluding-remarks} concludes by situating our work in the current literature and reflecting on its implications for future research.

\paragraph{Notation}
Before continuing, we provide the conventions we adopt throughout the paper;
our notation mirrors Part I~\cite{NclbPartI}, so we describe it only
briefly. For a sequence of vectors, subscripts denote coordinate index,
while parenthesized superscripts denote element index, \ie $x\ind{i}_j$ is the
$j$th coordinate of the $i$th entry in the sequence $\{x\ind{t}\}_{t\in\N}$.  For 
any $p\ge1$ and $p$ times continuously differentiable $f:\R^d
\to \R$, we let $\deriv{p} f(x)$ denote the symmetric tensor of $p$th order 
partial
derivatives of $f$ at point $x$. We let $\inner{\cdot}{\cdot}$ be the Euclidean
inner product on tensors, defined for order $k$ tensors $T$ and $M$ by
$\inner{T}{M} = \sum_{i_1, \ldots, i_k} T_{i_1, \ldots, i_k}M_{i_1, \ldots,
  i_k}$. We use $\otimes$ to denote the Kronecker product and
$\tensordim{k}{d}$ denote $d \times \cdots \times d$, $k$ times,
so that $T \in \R^{\tensordim{k}{d}}$ denotes an order $k$ tensor.

For a vector $v\in\R^d$ we let $\norm{v} \defeq \sqrt{\inner{v}{v}}$ denote the Euclidean ($\ell_2$) norm of $v$.  
For a tensor $T \in
\R^{\tensordim{k}{d}}$, the $\ell_2$-operator norm of $T$ is
$\opnorm{T} \defeq \sup_{\norm{v\ind{i}} \le 1}
\<v\ind{1} \otimes \cdots \otimes v\ind{k}, T\>$, where
we recall~\cite{ZhangLiQi12} that if $T$ is symmetric then
$\opnorm{T} = \sup_{\norm{v} = 1} |\<v^{\otimes k}, T\>|$
where $v^{\otimes k}$ denotes the $k$-th Kronecker power of $v$.
For vectors the
$\ell_2$ and $\ell_2$-operator norms are identical. %

For any $n\in\N$, we let $[n] \defeq \{1, \ldots, n\}$ denote the set of
positive integers less than or equal to $n$. We let $\mc{C}^\infty$ denote
the set of infinitely differentiable functions. We denote the $i$th standard
basis vector by $e\ind{i}$, and let $I_d \in \R^{d\times d}$ denote the $d
\times d$ identity matrix; we drop the subscript $d$ when it is clear from
context.  For any set $\mc{S}$ and functions $g,h : \mc{S} \to
\openright{0}{\infty}$ we write $g \lesssim h$ or $g = O(h)$ if there exists
a numerical constant $c < \infty$
such that $g(s) \le c\cdot h(s)$ for every $s\in\mc{S}$. We
write $g = \Otil{h}$ if $g \lesssim h \log(h + 2)$.

\section{A framework for lower bounds}
\label{sec:recap}

For ease of reference, this section provides a condensed version of
Sections~\exref{sec:prelims} and~\exref{sec:anatomy} of the first part of this 
series~\cite{NclbPartI} that lays out the notation, concepts
and strategy we use to prove lower bounds. Here, we are deliberately
brief; see~\cite{NclbPartI} for motivation, intuition and background 
for our definitions, 
as well as exposition of randomized and higher-order methods.

\subsection{Function classes}\label{sec:recap-funcs}

Typically, one designs optimization algorithms for certain classes of
appropriately regular functions~\cite{Nesterov04, BoydVa04, NemirovskiYu83}.
We thus focus on two notions of regularity that have been important for both
convex and non-convex optimization:
Lipschitzian properties of derivatives and bounds on function value.  A
function $f:\R^d \to \R$ has $\Smp$-Lipschitz $p$th order derivatives if it
is $p$ times continuously differentiable, and for every $x \in \R^d$ and
$v \in \R^d$, $\norm{v} = 1$, the directional projection
$t \mapsto f_{x,v}(t) \defeq f(x + t \cdot v)$ of $f$
satisfies
\begin{equation*}
  \left| f_{x,v}^{(p)}(t) - f_{x,v}^{(p)}(t') \right|
  \le \Smp \left|t - t'\right|
  ~~ \mbox{for~} t, t' \in \R,
\end{equation*}
where $f_{x,v}^{(p)}(\cdot)$ is the $p$th
derivative of $t \mapsto f_{x,v}(t)$. We occasionally refer to a
function with Lipschitz $p$th order derivatives as $p$th-order smooth.

\begin{definition}
  \label{def:intersect-f-class}
  Let $p \ge 1$, $\DeltaF > 0$ and $\Sm{p} > 0$. Then the set
  \begin{equation*}
    \Fclass{p}
  \end{equation*}
  denotes the union, over $d \in \N$, of the collection of 
  $\mc{C}^\infty$ functions $f : \R^d \to \R$ with $\Smp$-Lipschitz $p$th
  derivative and $f(0)-\inf_x f(x) \le \DeltaF$. For positive $\DeltaF$ and
  $\Sm{1}, \ldots, \Sm{p}$ we define
  \begin{equation*}
    \FclassMany{p} \defeq
    \bigcap_{q \le p}\Fclass{q}.
  \end{equation*}

\end{definition}
\noindent
The function classes $\Fclass{p}$ include functions on $\R^d$ for all $d \in
\N$, following the established practice of studying ``dimension free''
problems~\cite{NemirovskiYu83,Nesterov04,NclbPartI}.
 
We also require the following important invariance
notion~\cite[Ch.~7.2]{NemirovskiYu83}.
\begin{definition}[Orthogonal invariance]
  A class of functions $\mc{F}$ is \emph{orthogonally invariant} if for
  every $f \in \mc{F}$, $f : \R^d \to \R$, and every matrix $U \in \R^{d'
    \times d}$ such that $U^\top U = I_d$, the function $f_U: \R^{d'} \to
  \R$ defined by $f_U(x) = f(U^\top x)$ belongs to $\mathcal{F}$.
\end{definition}
\noindent
Every function class we consider is orthogonally invariant.

\subsection{Algorithm classes}\label{sec:recap-algs}

For any dimension $d\in \N$, an \emph{algorithm} $\alg$ (also referred to as
a \emph{method} or \emph{procedure}) maps functions $f:\R^d\to\R$ to a
sequence of \emph{iterates} in $\R^d$; that is, $\alg$ is defined separately
for every finite $d$. We let
\begin{equation*}
  \alg[f] = \{x\ind{t}\}_{t=1}^\infty
\end{equation*}
denote the sequence $x\ind{t} \in \R^d$ of iterates that $\alg$ generates
when operating on $f$.
Throughout this paper, we focus on \emph{first-order
deterministic algorithms}. Such an algorithm $\alg$ is one that,
operating on $f : \R^d \to \R$, produces iterates of the form
\begin{equation*}
  x\ind{i} = \alg\ind{i}
  \left(f(x\ind{1}), \grad f(x\ind{1}),
  \ldots,
  f(x\ind{i-1}), \grad f(x\ind{i-1})\right)
  ~~\text{for }i\in\N,
\end{equation*}
where $\alg\ind{i} : \R^{d(i-1)+i} \to \R^d$ is measurable
(the dependence on dimension $d$ is implicit). We denote the
collection of first-order deterministic algorithms by $\AlgDet\ind{1}$.

Key to our development are \emph{zero-respecting algorithms}
(see Sec.~\exref{sec:prelims-algs} for more information).
For $v \in\R^d$ we let $\support{v} \defeq \{ i \in [d] \mid v_i \neq 0 \}$
denote the support (non-zero indices) of $v$.
Then we say that the sequence
$x\ind{1}, x\ind{2}, \ldots$ is \emph{first-order zero-respecting
  with respect to $f$} if
\begin{equation}
  \label{eq:firstorder-prelim-zr-def}
  \support{x\ind{t}} \subseteq \bigcup_{s < t}
  \support{\grad f(x\ind{s})}~~\mbox{for each } t \in \N.
\end{equation}
The definition~\eqref{eq:firstorder-prelim-zr-def} says that $x\ind{t}_i =
0$ whenever the partial derivatives of $f$ with respect to coordinate $x_i$ are 
zero for all preceding iterations.  Extending the
definition~\eqref{eq:firstorder-prelim-zr-def} in the obvious way, an algorithm
$\alg$ is \emph{first-order zero-respecting} if for any $f : \R^d\to \R$, the
iterate sequence $\alg[f]$ is zero-respecting with respect to $f$. The set
$\AlgZR\ind{1}$ comprises all such first-order algorithms.

\subsection{Complexity measures}\label{sec:recap-complexity}

For a sequence $\{x\ind{t}\}_{t\in\N}$ we define
the \emph{complexity} of the sequence $\{x\ind{t}\}_{t \in \N}$ on
$f$ by
\begin{equation*}
  \TimeEps{\{x\ind{t}\}_{t\in\N}}{f} \defeq
  \inf\left\{t\in\N \mid \normbig{\grad
    f(x\ind{t})} \le \epsilon \right\},
\end{equation*}
the index of the first element in $\{x\ind{t}\}_{t\in\N}$ that is an
$\epsilon$-stationary point of $f$.
The complexity of algorithm $\alg$ on $f$ is simply
the complexity of the sequence $\alg[f]$ on $f$, so
we define
\begin{equation*}
  \TimeEps{\alg}{f} \defeq
  \TimeEps{\alg[f]}{f}.
\end{equation*}
We define the \emph{complexity of algorithm
  class $\mathcal{A}$ on function class $\mathcal{F}$} as
\begin{equation}
  \label{eqn:firstorder-complexity-def}
  \CompEps{\mathcal{A}}{\mathcal{F}} \defeq 
  \inf_{\alg\in\mc{A}}\sup_{f\in\FclassBlank} \TimeEps{\alg}{f}.
\end{equation}
Table~\ref{table:summary} provides upper and lower bounds on the
quantity~\eqref{eqn:firstorder-complexity-def} for different choices of
$\mc{A}$ and $\FclassBlank$. For example, gradient descent guarantees
$\CompEps{\AlgDetFO\cap\AlgZRFO}{\Fclass{1}} \le 2\DeltaF \Sm{1}
\epsilon^{-2}$.

\subsection{How to show a lower bound}
\label{sec:recap-anatomy}

The last step in our preliminaries is to give an overview of our proof
strategy; this is an abbreviated version of
Section~\exref{sec:anatomy}. There, we abstract classical techniques for
lower bounds in convex optimization~\cite{NemirovskiYu83,Nesterov04},
presenting a generic method for proving lower bounds on deterministic
methods (of any order) applied to functions in any orthogonally invariant
class.

Our starting point is what we call a \emph{zero-chain},
which distills the ``chain-like'' structure of Nesterov's
construction~\eqref{eq:nesterov-chain}.
\begin{definition}\label{def:recap-zero-chain}
  A function $f : \R^d \rightarrow \R$ is a \emph{first-order
    zero-chain} if for every $x \in \R^d$,
  \begin{equation*}
    \support{x} \subseteq 
    \{ 1, \dots, i - 1 \}
    ~~ \mbox{implies} ~~
    \support{\grad f(x)} \subset  \{ 1, \dots, i \}.
  \end{equation*}
\end{definition}
\noindent
In Definition~\exref{def:zero-chain}~\cite{NclbPartI}, we extend   
zero-chains to higher orders; in our terminology Nesterov's 
function~\eqref{eq:nesterov-chain} is 
a first-order zero-chain, but not a second-order zero-chain.
A first-order zero-chain limits the rate that zero-respecting
algorithms acquire information from derivatives, forcing them to ``discover'' 
coordinates one by one, as
the following observation makes clear.

\begin{observation}
  \label{obs:recap-zero-chain}
  Let $f : \R^d \rightarrow \R$ be a first-order zero-chain and let
  $x\ind{1} = 0, x\ind{2}, \ldots$ be a first-order zero-respecting sequence
  with respect to $f$. Then $x\ind{t}_j=0$ for $j \ge t$ and all $t\le d$.
\end{observation}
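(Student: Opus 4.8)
The claim is essentially a bookkeeping fact: combining the zero-chain property of $f$ (Definition~\ref{def:recap-zero-chain}) with the zero-respecting property of the sequence (equation~\eqref{eq:firstorder-prelim-zr-def}), one shows by induction on $t$ that $\support{x\ind{t}} \subseteq \{1,\dots,t-1\}$ for all $t \le d$, which immediately gives $x\ind{t}_j = 0$ for $j \ge t$. First I would set up the induction, using the hypothesis $x\ind{1}=0$ as the base case: $\support{x\ind{1}} = \emptyset \subseteq \{1,\dots,0\} = \emptyset$. For the inductive step, assume $\support{x\ind{s}} \subseteq \{1,\dots,s-1\}$ for all $s \le t$ (with $t < d$).

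\textbf{Key steps.} For each $s \le t$, the inductive hypothesis says $\support{x\ind{s}} \subseteq \{1,\dots,s-1\} \subseteq \{1,\dots,t-1\}$, so in particular $\support{x\ind{s}} \subseteq \{1,\dots,(s+1)-1\}$. Applying the first-order zero-chain property of $f$ with $i = s$ (more precisely, with the index chosen so that $\support{x\ind{s}} \subseteq \{1,\dots,i-1\}$, taking $i = s$), we get $\support{\grad f(x\ind{s})} \subseteq \{1,\dots,s\}$. Then the zero-respecting condition~\eqref{eq:firstorder-prelim-zr-def} gives
\begin{equation*}
  \support{x\ind{t+1}} \subseteq \bigcup_{s \le t} \support{\grad f(x\ind{s})}
  \subseteq \bigcup_{s \le t} \{1,\dots,s\} = \{1,\dots,t\},
\end{equation*}
which closes the induction. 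Finally, $\support{x\ind{t}} \subseteq \{1,\dots,t-1\}$ means precisely that $x\ind{t}_j = 0$ for every $j \ge t$, as long as $t \le d$ (so that the indices in question are valid coordinates).

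\textbf{Main obstacle.} There is no real obstacle here; the proof is a short induction. The only point requiring a little care is the indexing convention in the zero-chain definition: one must correctly match the ``$i$'' in Definition~\ref{def:recap-zero-chain} to the iterate index so that the support of $\grad f(x\ind{s})$ is controlled by $\{1,\dots,s\}$ rather than $\{1,\dots,s-1\}$ — this off-by-one is exactly the mechanism that forces coordinates to be discovered one at a time, and it is why the conclusion is $x\ind{t}_j = 0$ for $j \ge t$ (and not $j \ge t-1$). I would also note explicitly that the restriction $t \le d$ is needed only so that the coordinates $\{1,\dots,t-1\}$ actually exist in $\R^d$; the combinatorial argument itself is dimension-agnostic.
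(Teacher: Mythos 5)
Your proof is correct and is exactly the induction the paper intends: the paper states this as an Observation without proof, and the strong induction showing $\support{x\ind{t}} \subseteq \{1,\dots,t-1\}$ — using the zero-chain property with $i=s$ to control $\support{\grad f(x\ind{s})}$ and then the zero-respecting inclusion to bound $\support{x\ind{t+1}}$ — is the standard argument it relies on. Your handling of the off-by-one indexing and the role of $t \le d$ is also right.
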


The important insight, essentially due to \citet{NemirovskiYu83}
 is that by using a resisting oracle~\cite{NemirovskiYu83,
  Nesterov04} that can adversarially rotate the function $f$, any
lower bound for zero-respecting algorithms implies an identical bound for
all deterministic algorithms:
\begin{proposition}
  \label{prop:recap-det-zr}
  Let $\mc{F}$ be an orthogonally invariant function class. Then
  \begin{equation*}
    \CompEps{\AlgDet\ind{1}}{\mathcal{F}} 
    \ge
    \CompEps{\AlgZR\ind{1}}{\mathcal{F}}.
  \end{equation*}
\end{proposition}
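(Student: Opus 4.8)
The plan is to run the classical \emph{resisting-oracle} argument of \citet{NemirovskiYu83} and \citet{Nesterov04} within our framework: I will show that for every $\alg \in \AlgDet\ind{1}$ there is a zero-respecting algorithm $\alg' \in \AlgZR\ind{1}$ with
\[
  \sup_{f\in\mc{F}}\TimeEps{\alg}{f} \;\ge\; \sup_{g\in\mc{F}}\TimeEps{\alg'}{g} \;\ge\; \CompEps{\AlgZR\ind{1}}{\mc{F}},
\]
where the last inequality is immediate from $\alg'\in\AlgZR\ind{1}$ and the definition of $\CompEps{\cdot}{\cdot}$; taking $\inf_{\alg\in\AlgDet\ind{1}}$ on the left-hand side then yields the proposition. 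On input $g$, the algorithm $\alg'$ simulates $\alg$ on an adaptively rotated copy $g_U(y)=g(U^\top y)$ of $g$ and reports the pulled-back iterates $\wt{x}\ind{t}\defeq U^\top x\ind{t}$, where $\{x\ind{t}\}=\alg[g_U]$.

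The heart of the proof is the construction of $U$. Fix $\alg$ and $g:\R^d\to\R$ in $\mc{F}$, and work in an ambient dimension $d'$ chosen large enough (admissible since $\mc{F}$ contains functions of every dimension, and since it suffices to carry the construction out only up to the iteration horizon of interest). I build $U\in\R^{d'\times d}$ with $U^\top U=I_d$ one column at a time, interleaved with the steps of $\alg$ on $g_U$, maintaining after step $t$ the invariant that the columns $\{Ue\ind{j}\}_{j\in D_t}$ form an orthonormal family, where $D_t\defeq\bigcup_{s\le t}\support{\grad g(\wt{x}\ind{s})}$ is the set of coordinates ``discovered'' so far. Four points carry the induction. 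First, \emph{well-definedness}: since $x\ind{t}$ is $\alg\ind{t}$ applied to the data $\{(g(\wt{x}\ind{s}),\,\grad g_U(x\ind{s}))\}_{s<t}$ with $\grad g_U(x\ind{s})=U\grad g(\wt{x}\ind{s})=\sum_{j\in\support{\grad g(\wt{x}\ind{s})}}[\grad g(\wt{x}\ind{s})]_j\,Ue\ind{j}$, and all of this is determined by the columns $\{Ue\ind{j}\}_{j\in D_{t-1}}$ alone, the query $x\ind{t}$ is fixed \emph{before} any new column of $U$ is chosen --- there is no circularity. Second, the \emph{zero-respecting property}: having seen $x\ind{t}$, I choose the as-yet-undetermined columns $\{Ue\ind{j}\}_{j\notin D_{t-1}}$ needed to represent $\grad g(\wt{x}\ind{t})$ from the subspace orthogonal both to the already-fixed columns and to $x\ind{1},\dots,x\ind{t}$; this forces $\inner{Ue\ind{j}}{x\ind{t}}=0$ for every $j\notin D_{t-1}$, i.e.\ $\support{\wt{x}\ind{t}}\subseteq D_{t-1}=\bigcup_{s<t}\support{\grad g(\wt{x}\ind{s})}$, which is exactly~\eqref{eq:firstorder-prelim-zr-def}. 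Third, \emph{initialization}: a possibly nonzero first query $x\ind{1}$ is absorbed by taking the column space of $U$ orthogonal to $x\ind{1}$, so that $\wt{x}\ind{1}=0$ as a zero-respecting sequence requires. Fourth, \emph{room}: since each step forces only boundedly many additional orthogonality constraints, taking $d'$ large enough relative to the horizon keeps the reserve subspace nonempty, and once $D_t=[d]$ the matrix $U$ is completely determined and the zero-respecting condition holds vacuously from then on.

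With $\alg'[g]\defeq\{\wt{x}\ind{t}\}_{t\in\N}$ thus defined --- a first-order zero-respecting algorithm --- the bound follows by bookkeeping. As $U$ has orthonormal columns, $\norm{\grad g_U(x\ind{t})}=\norm{U\grad g(\wt{x}\ind{t})}=\norm{\grad g(\wt{x}\ind{t})}$, so $\TimeEps{\alg'}{g}=\TimeEps{\{\wt{x}\ind{t}\}}{g}=\TimeEps{\{x\ind{t}\}}{g_U}=\TimeEps{\alg}{g_U}$, while $g_U\in\mc{F}$ by orthogonal invariance. Hence $\sup_{f\in\mc{F}}\TimeEps{\alg}{f}\ge\TimeEps{\alg}{g_U}=\TimeEps{\alg'}{g}$ for every $g\in\mc{F}$, giving $\sup_{f\in\mc{F}}\TimeEps{\alg}{f}\ge\sup_{g\in\mc{F}}\TimeEps{\alg'}{g}\ge\CompEps{\AlgZR\ind{1}}{\mc{F}}$; since this holds for all $\alg\in\AlgDet\ind{1}$, taking the infimum over $\alg$ proves $\CompEps{\AlgDet\ind{1}}{\mc{F}}\ge\CompEps{\AlgZR\ind{1}}{\mc{F}}$.

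The main obstacle is making the simultaneous induction of the second paragraph airtight: ensuring that every query of $\alg$ depends only on columns of $U$ revealed so far (no circularity), while every newly revealed column can always be placed orthogonal both to the previous columns and to all previous queries, so that each query's ``leakage'' outside the discovered coordinate block is annihilated. Making this work for an unbounded iteration count is precisely what forces the move to a sufficiently high ambient dimension / per-horizon reduction; everything else --- the regularity of $g_U$ and the preservation of gradient norms under $U$ --- is immediate from orthogonal invariance. (The full argument is carried out in Part~I~\cite{NclbPartI}.)
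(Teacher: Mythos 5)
Your proof is correct and is exactly the resisting-oracle/adaptive-rotation argument that this paper invokes for Proposition~\ref{prop:recap-det-zr} (the paper defers the formal proof to Proposition~\exref{prop:prelims-det-zr} of Part~I, which carries out the same column-by-column construction of $U$, the same orthogonality-to-past-queries device to enforce the zero-respecting property, and the same per-horizon dimension inflation). No substantive differences to report.
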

\noindent
See Proposition~\exref{prop:prelims-det-zr} for a more general version of
this result.

\newcommand{\fbad}{f_\epsilon}

With this proposition, our strategy, inspired by
\citet{Nesterov04}, becomes clear. To prove a lower bound on first-order 
deterministic algorithms for a
function class $\mc{F}$, we find $\fbad : \R^{\T}
\rightarrow \R$ such that {(i)} $\fbad$ is a first-order zero-chain, 
{(ii)} $\fbad \in \mc{F}$, and {(iii)} $\norm{\grad \fbad(x)} 
>\epsilon$ for every $x$ such that $x_{\T}=0$.  Then for 
$\alg\in\AlgZR\ind{1}$ and
$\{x\ind{t}\}_{t\in\N}=\alg[f]$, Observation~\ref{obs:recap-zero-chain}
shows that $x\ind{t}_T=0$ for $t\le \T$, and the large gradient
property~(iii) guarantees the non-stationarity $\norm{\grad \fbad(x\ind{t})}
>\epsilon$ for all $t\le\T$. We immediately obtain the complexity lower
bound
\begin{equation*}
  \CompEps{\AlgZR\ind{1}}{\FclassBlank}
  = \inf_{\alg\in\AlgZR\ind{1}}\sup_{f \in \FclassBlank}\TimeEps{\alg}{f}
  \ge \inf_{\alg\in\AlgZR\ind{1}}\TimeEps{\alg}{\fbad} > T.
\end{equation*}
This strategy highlights the importance of ``dimension freedom,'' because we
take the dimension of $\fbad$ to be at least $T$, which must thus grow
inversely with $\epsilon$.

\section{Lower bounds for finding stationary points of convex functions}
\label{sec:convex}

While for \emph{convex} optimization guarantees of small gradients
are atypical topics of study, we nonetheless begin by considering the
complexity of finding stationary points of smooth convex functions.  This
serves two purposes.
First, it is a baseline for finding stationary points in the non-convex
setting; based on algorithmic upper bounds due to \citet{Nesterov12b}, we
see that convexity makes this task fundamentally easier. Second, our lower
bound construction for convex problems underpins our construction and
analysis for general smooth (non-convex) functions in the sequel, allowing
us to demonstrate our techniques in a simpler setting.
Of course, in convex optimization, it is typically more useful to find
points $x$ with small optimality gap, 
$f(x) \le \inf_{z} f(z) +\epsilon$. 
Convexity allows efficient algorithms for guaranteeing such
optimality, and typically one ignores questions of the magnitude of the
gradient in favor of small optimality or duality
gaps~\cite{BoydVa04}. Nonetheless, in some situations---such as certifying
(near) dual feasibility or small constraint residuals in primal-dual or
operator splitting algorithms~\cite[e.g.][]{BoydPaChPeEc11}---achieving
small gradients is important.

We proceed as follows. In Section~\ref{sec:convex-classes} we define the
class of convex functions under consideration and a quadratic subclass. In
Section~\ref{sec:convex-contstruction}, we construct a hard quadratic
instance, and verify its key properties. Finally,  in Section~\ref{sec:convex-lb}, 
we state, discuss and prove our lower bounds.

\subsection{Convex function classes}\label{sec:convex-classes}

The collections of functions we consider are the following.
\begin{definition}
  \label{def:convex-sets}
  Let $\SmGrad > 0$ and $\DeltaF > 0$. The set
  \begin{equation*}
    \ConvClass
  \end{equation*}
  denotes the union, over $d \in \N$, of the collections of $\mc{C}^\infty$
  convex functions $f : \R^d \to \R$ with $\SmGrad$-Lipschitz gradient and
  $f(0) - \inf_x f(x) \le \DeltaF$.
  Additionally,
  \begin{equation*}
    \QuadClass \subset \ConvClass
  \end{equation*}
  is the set of convex quadratic functions satisfying the above conditions.
\end{definition}

Our results, following \citet{NemirovskiYu83} and \citet{Nesterov04},
demonstrate that for deterministic first-order methods, the class 
$\QuadClass$ is ``hard
enough,'' in that it provides nearly sharp lower bounds for first-order
methods, which immediately apply to $\ConvClass$ and $\Fclass{1}$. We also
have $\QuadClass = \FclassManyCustom{p}{\DeltaF, \SmGrad, 0, \ldots,
	0}$ or any $p \ge 2$.%

 In addition to functions restricted by initial optimality gap, we consider the following initial distance-based definition.
\begin{definition}
  \label{def:distance-convex-sets}
  Let $D > 0$ and $\SmGrad > 0$. The set
  \begin{equation*}
    \ConvClassD
  \end{equation*}
  denotes the union, over $d \in \N$, of the collections of $\mc{C}^\infty$
  convex functions $f : \R^d \to \R$ with $\SmGrad$-Lipschitz gradient
  satisfying $\norm{x\opt} \le D$ for all $x\opt \in \argmin_x f(x)$.
  Additionally, 
  \begin{equation*}
    \QuadClassD \subset \ConvClassD
  \end{equation*}
  is the set of convex quadratic functions satisfying the above
  conditions.
\end{definition}

Standard convergence results in (smooth) convex
optimization~\cite[e.g.][]{Nesterov04} apply to functions with bounded
domain, \ie $f \in \ConvClassD$ rather than $\ConvClass$. This 
is for good
reason: for any pair $\DeltaF, \SmGrad$, any $\epsilon <\DeltaF$, any
first-order zero-respecting or deterministic algorithm $\alg$, and
\emph{any} $T\in\N$, there exists a function $f \in \QuadClass$ with
$\SmGrad$-Lipschitz gradient such for $\{x\ind{t}\}_{t \in \N} = \alg[f]$ we
have
\begin{equation*}
  \inf_{t \in \N} \left\{t \mid f(x\ind{t}) \le \inf_x f(x) + \epsilon
  \right\} > T.
\end{equation*}
(See Appendix~\ref{sec:convex-takes-forever},
Lemma~\ref{lemma:convex-takes-forever} for a proof of this claim.)  Since
this holds for any $T\in\N$ and $\epsilon<\DeltaF$, making even the
slightest function value improvement to functions in $\QuadClass$ may take
arbitrarily long. Thus, when we consider the function classes of
Definition~\ref{def:convex-sets}, we can only hope to give convergence
guarantees in terms of stationarity---as is common in the non-convex case.

\subsection{The worst function in the (convex) world}
\label{sec:convex-contstruction}

We now constructing the functions that are difficult for any
zero-respecting first-order method.  For parameters $\T\in\N$
and $\alpha \le 1$ we define the (unscaled) hard function
$\fhardConv:\R^{\T}\to\R$ by
\begin{equation}
  \label{eq:fhard-conv-def}
  \fhardConv(x) =  \frac{\alpha}{2}(x_1 - 1)^2 
  + \half \sum_{i=1}^{\T-1} (x_i - x_{i+1})^2.
\end{equation}
For $\alpha=1$, $\fhardConv[\T,1]$ this is Nesterov's
``worst function in the world''
\cite[\S~2.1.2]{Nesterov04}.
The
parameter $\alpha$ allows us to control $f(0)$ and thus provides a degree of
freedom in satisfying the constraint $f(0)-\inf_x f(x) \le \Delta$ for our
lower bounds. By inspection,
\begin{equation*}
  \fhardConv(x)= \half x^\top L x - b^\top x + \frac{\alpha}{2} 
\end{equation*}
where
\begin{equation}
  \label{eqn:laplacian}
  L = \left[\begin{array}{ccccc}
      1+\alpha & -1 \\
      -1 & 2 & -1 \\
      & \ddots & \ddots & \ddots\\
      &  & -1 & 2 & -1\\
      &  &  & -1 & 1
    \end{array}\right]
  \in \R^{\T\times\T}
\end{equation}
is the unnormalized graph Laplacian of the simple path
on $\T$ vertices (see~\cite{Chung98}) plus the term $\alpha$ in the
position $L_{11}$, and $b = \alpha
e\ind{1}$.

Let us now verify that $\fhardConv$ meets the three requirements of our
lower bound strategy.

\begin{lemma}\label{lem:convex-props}
  For all $\T\in\N$ and $\alpha \le 1$, $\fhardConv$ has
  the following properties.
  \begin{enumerate}[i.]
  \item \label{item:convex-zero-chain} \textbf{Zero-chain}~~
    $\fhardConv$ is a first-order zero-chain.
  \item \label{item:convex-fclass} \textbf{Membership in function class}~~
    \begin{enumerate}[(a)]
    \item $\fhardConv$ has $4$-Lipschitz continuous gradient.
    \item $\fhardConv(0) - \inf_{x\in\R^{\T}}\fhardConv(x) = \alpha/2$.
    \item The unique 
      minimizer of $\fhardConv(x)$ is
      $x\opt = \ones$, and
      $\norm{x\opt} = \sqrt{\T}$.
    \end{enumerate}
  \item \label{item:convex-hard} \textbf{Large gradient}~~
    For every $x\in\R^{\T}$ such that $x_{\T}=0$,
    $\norms{\grad \fhardConv(x)} > \left(\T-1+\frac{1}{\alpha}\right)^{-3/2}$.
  \end{enumerate}
\end{lemma}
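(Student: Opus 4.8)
\emph{Proof plan.} Parts~(i) and~(ii) I would dispatch by direct computation with the explicit quadratic form $\fhardConv(x) = \half x^\top L x - b^\top x + \alpha/2$, where $b=\alpha e\ind{1}$. For the zero-chain property~(i), I would write out $\del_1\fhardConv(x) = \alpha(x_1-1)+(x_1-x_2)$, $\del_j\fhardConv(x)=(x_j-x_{j-1})+(x_j-x_{j+1})$ for $1<j<\T$, and $\del_\T\fhardConv(x)=x_\T-x_{\T-1}$; since $\del_j\fhardConv$ depends only on coordinates $j-1,j,j+1$, whenever $\support{x}\subseteq\{1,\dots,i-1\}$ each index $j\ge i+1$ has $x_{j-1}=x_j=x_{j+1}=0$, hence $\del_j\fhardConv(x)=0$ and $\support{\grad\fhardConv(x)}\subseteq\{1,\dots,i\}$. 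For~(ii)(a), $\hess\fhardConv\equiv L$, and Gershgorin's circle theorem on the three row types of $L$ (centers $1+\alpha,2,1$, radii $1,2,1$) places every eigenvalue in $[0,4]$ once $\alpha\le1$, giving $\opnorm{\hess\fhardConv}\le4$. For~(ii)(b), $\fhardConv(0)=\alpha/2$ by inspection, while solving $Lx=b$: the interior equations force the consecutive differences $x_{j+1}-x_j$ to be equal and the last equation forces that common value to vanish, so $x$ is constant, and the first equation pins $x=\ones$; thus $\inf\fhardConv=\fhardConv(\ones)=0$ and $\fhardConv(0)-\inf\fhardConv=\alpha/2$. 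For~(ii)(c), I would note that $v^\top L v = v^\top L_{\mathrm{path}}v+\alpha v_1^2$ vanishes only when $v$ is both constant and has $v_1=0$, i.e.\ $v=0$, so $L\succ0$, $\fhardConv$ is strictly convex with unique minimizer $\ones$, and $\norm{\ones}=\sqrt{\T}$.

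The real content is part~(iii), and the plan is to convert the gradient lower bound into an upper bound on $\norm{L^{-1}e\ind{\T}}$. Fix any $x$ and set $g\defeq\grad\fhardConv(x)=Lx-b$; using $L^{-1}b=\ones$ (from~(ii)(b)) I get $x=\ones+L^{-1}g$, so the hypothesis $x_\T=0$ reads exactly $1+\inner{L^{-1}e\ind{\T}}{g}=0$, and Cauchy--Schwarz then yields $\norms{g}\ge 1/\norm{L^{-1}e\ind{\T}}$. To bound the denominator I would solve $Ly=e\ind{\T}$ in closed form using the path structure: the interior rows say $y_{j+1}-y_j$ is constant, the first row says $y_2-y_1=\alpha y_1$, and the last row says $y_\T-y_{\T-1}=1$; chaining these gives $y_1=1/\alpha$ and $y_j=(j-1)+1/\alpha$. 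Hence
\[
  \norm{L^{-1}e\ind{\T}}^2=\sum_{j=1}^{\T}\Big((j-1)+\tfrac1\alpha\Big)^2 < \T\Big((\T-1)+\tfrac1\alpha\Big)^2 \le \Big((\T-1)+\tfrac1\alpha\Big)^3 ,
\]
the last step using $\T\le(\T-1)+1/\alpha$, which holds precisely because $\alpha\le1$. Combining the two displays gives $\norms{\grad\fhardConv(x)}>((\T-1)+1/\alpha)^{-3/2}$ (the degenerate case $\T=1$ reducing to a one-line check).

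The main obstacle is the last paragraph: one genuinely needs the exact inverse action $L^{-1}e\ind{\T}$ rather than a generic operator-norm bound, and then the arithmetic must come out to precisely the exponent $3/2$ and constant $1$ in the statement. The one delicate point is that the factor $\sqrt{\T}$ lost from summing $\T$ squared coordinates has to be reabsorbed into an extra power of $(\T-1)+1/\alpha$, which works only thanks to the normalization $\alpha\le1$; the remaining steps are routine tridiagonal (graph-Laplacian) linear algebra.
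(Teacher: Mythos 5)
Your proof is correct and takes essentially the same route as the paper's: parts~(i) and~(ii) are the same direct computations, and for part~(iii) your vector $L^{-1}e\ind{\T}$ with entries $(j-1)+\tfrac1\alpha$ is exactly the (unnormalized) null vector $z$ of $M^\top$ onto which the paper projects $b$ in its least-squares formulation, so your Cauchy--Schwarz step against the constraint $x_\T=0$ and the paper's projection identity compute the same quantity and finish with the identical bound $\sum_j((j-1)+\tfrac1\alpha)^2 < ((\T-1)+\tfrac1\alpha)^3$. The only caveat, shared verbatim by the paper's own proof, is the degenerate corner $\T=1$, $\alpha=1$, where the strict inequality becomes an equality; this is immaterial to how the lemma is used.
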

\begin{proof}
  Part~\ref{item:convex-zero-chain} is immediate from
  Definition~\ref{def:recap-zero-chain}, since for every $i\in[d]$, $\grad_i
  \fhardConv(x)=0$ whenever
  $x_{i-1}=x_i=x_{i+1}=0$. Part~\ref{item:convex-fclass} is also immediate,
  as $\fhardConv(\ones) = \inf_x \fhardConv(x) = 0$, and $\opnorm{L}\le 4$ 
  (apply the triangle inequity to $\norm{Lv}$). To establish 
  part~\ref{item:convex-hard}, we
  calculate the minimum value of $\norms{\grad \fhardConv(x)}^2$ obtainable
  by any vector $x\in\R^\T$ with $x_\T=0$. Letting $M = L [I_{\T - 1} ~
    0_{\T-1}]^\top \in \R^{\T \times (\T - 1)}$ be the matrix $L$
  of~\eqref{eqn:laplacian} with its last column removed and recalling $b =
  \alpha e\ind{1}$, this becomes the least squares problem, whose solutions 
  is the squared norm of the projection of $b$ to the (one-dimensional) 
  nullspace 
  of $M^\top$:
  \begin{equation}\label{eq:convex-least-squares}
    \inf_{x\in\R^\T, x_\T = 0}
    \norm{\grad \fhardConv(x)}^2 = 
    \inf_{v\in\R^{\T-1}} \norm{M v - b}^2 = b^\top
    \left(I_\T- M(M^{\top}M)^{-1}M^\top\right)b
    =\left(z^\top b\right)^{2},
  \end{equation}
  where 
  $z \in \R^{\T}$ is the
  unique (up to sign) unit-norm solution to $M^\top z = 0$. A calculation
  shows that
  \begin{equation*}
    z_{j}= \frac{j-1+\frac{1}{\alpha}}{\sqrt{\sum_{i=1}^{\T}(i-1+\frac{1}{\alpha})^{2}}}.
  \end{equation*}
  Substituting $z$ and $b$ into
  Eq.~\eqref{eq:convex-least-squares}, we have that $x_\T = 0$ implies
  \begin{equation*}
    \norm{\grad \fhardConv(x)}^2 \ge \frac{1}{{\sum_{i=1}^{\T}(i-1+\frac{1}{\alpha})^{2}}} > \frac{1}{(T-1+\frac{1}{\alpha})^3},
  \end{equation*}
  giving the result.
\end{proof}

\subsection{Scaling argument and final bound}\label{sec:convex-lb}

With our hard instance in place, we provide our lower
bounds for finding stationary points of convex functions. We note that the 
lower bound for the class $\QuadClassD$ also follows from 
the standard lower bounds on finding $\epsilon$-suboptimal points, since for 
every $q\in\QuadClassD$ an $\epsilon$-stationary point is also $\epsilon 
D$-suboptimal.

\begin{theorem}\label{thm:convex-lb}
  Let $\epsilon, \DeltaF, D$, and $\SmGrad$ be positive. Then
  \begin{subequations}
    \begin{equation}
      \CompEps{\AlgDetFO}{\ConvClass} \ge
      \CompEps{\AlgZRFO}{\QuadClass} \ge
      \frac{\sqrt{\SmGrad\DeltaF}}{4}\epsilon^{-1},
      \label{eqn:fval-convex-lower}
    \end{equation}
    and
    \begin{equation}
      \CompEps{\AlgDetFO}{\ConvClassD} \ge 
      \CompEps{\AlgZRFO}{\QuadClassD} \ge 
      \frac{\sqrt{\SmGrad D}}{2}\epsilon^{-1/2}.
      \label{eqn:distance-convex-lower}
    \end{equation}
  \end{subequations}
\end{theorem}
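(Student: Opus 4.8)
The plan is to combine the deterministic-to-zero-respecting reduction of Proposition~\ref{prop:recap-det-zr} with a suitably rescaled copy of the hard instance $\fhardConv$ from Lemma~\ref{lem:convex-props}. First I would dispose of the leftmost inequality in each of~\eqref{eqn:fval-convex-lower} and~\eqref{eqn:distance-convex-lower}: the classes $\ConvClass$ and $\ConvClassD$ are orthogonally invariant, so Proposition~\ref{prop:recap-det-zr} yields $\CompEps{\AlgDetFO}{\ConvClass}\ge\CompEps{\AlgZRFO}{\ConvClass}$ and $\CompEps{\AlgDetFO}{\ConvClassD}\ge\CompEps{\AlgZRFO}{\ConvClassD}$, and because $\QuadClass\subseteq\ConvClass$ and $\QuadClassD\subseteq\ConvClassD$, restricting the supremum in~\eqref{eqn:firstorder-complexity-def} to the smaller class only decreases the complexity. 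Thus it suffices to lower bound $\CompEps{\AlgZRFO}{\QuadClass}$ and $\CompEps{\AlgZRFO}{\QuadClassD}$.

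For both, I would take $f(x)=\lambda\,\fhardConv(x/\eta)$ with $\lambda,\eta>0$, $\alpha\le1$, $\T\in\N$ to be chosen; this remains a $\mc{C}^\infty$ convex quadratic first-order zero-chain, since rescaling alters neither the support structure of iterates and gradients nor positive definiteness of the Hessian. The three properties in Lemma~\ref{lem:convex-props} transform transparently: $\grad f$ is $(4\lambda/\eta^2)$-Lipschitz, $f(0)-\inf f=\lambda\alpha/2$, the unique minimizer is $\eta\ones$ of norm $\eta\sqrt{\T}$, and $x_\T=0$ implies $\norm{\grad f(x)}>(\lambda/\eta)(\T-1+1/\alpha)^{-3/2}$. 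For any parameter choice making $f$ a member of the relevant class with this last quantity at least $\epsilon$, the lower bound is then the standard one: every $\alg\in\AlgZRFO$ has $x\ind{1}=0$, so Observation~\ref{obs:recap-zero-chain} forces $x\ind{t}_\T=0$ for $t\le\T$, hence $\norm{\grad f(x\ind{t})}>\epsilon$ for $t\le\T$ and $\TimeEps{\alg}{f}\ge\T+1$; this holds uniformly in $\alg$, so $\CompEps{\AlgZRFO}{\cdot}\ge\T+1$.

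It remains to choose parameters. For~\eqref{eqn:fval-convex-lower} I would saturate both constraints of $\QuadClass$, setting $\lambda\alpha/2=\DeltaF$ and $4\lambda/\eta^2=\SmGrad$, which gives $\lambda/\eta=\sqrt{\DeltaF\SmGrad/(2\alpha)}$, and then pick $\alpha=1/\T$: now $\T-1+1/\alpha=2\T-1$, and a short computation shows $\norm{\grad f(x)}>\sqrt{\SmGrad\DeltaF}/(4\T)$ whenever $x_\T=0$, so the construction is valid for every $\T\le\sqrt{\SmGrad\DeltaF}\,\epsilon^{-1}/4$. Taking $\T=\floor{\sqrt{\SmGrad\DeltaF}\,\epsilon^{-1}/4}$ (when this is $0$, i.e.\ $\sqrt{\SmGrad\DeltaF}\,\epsilon^{-1}/4<1$, the claim is immediate since $\CompEps{\cdot}{\cdot}\ge1$) gives $\CompEps{\AlgZRFO}{\QuadClass}\ge\T+1\ge\sqrt{\SmGrad\DeltaF}\,\epsilon^{-1}/4$. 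For~\eqref{eqn:distance-convex-lower} I would instead take $\alpha=1$ and saturate $\eta\sqrt{\T}=D$ and $4\lambda/\eta^2=\SmGrad$, so $\lambda/\eta=\SmGrad D/(4\sqrt{\T})$ and $\norm{\grad f(x)}>\SmGrad D/(4\T^2)$ on $\{x_\T=0\}$; this is $\ge\epsilon$ exactly when $\T\le\tfrac12\sqrt{\SmGrad D}\,\epsilon^{-1/2}$, and setting $\T$ to the floor of that quantity yields $\CompEps{\AlgZRFO}{\QuadClassD}\ge\T+1\ge\tfrac12\sqrt{\SmGrad D}\,\epsilon^{-1/2}$.

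The one nontrivial modeling decision is the choice of $\alpha$ in the first bound: if $\alpha$ is held fixed, the factor $(\T-1+1/\alpha)^{-3/2}$ together with the scaling freedom only supports $\T\lesssim\epsilon^{-2/3}$, so $\alpha$ must shrink with $\T$ to reach $\epsilon^{-1}$; the choice $\alpha=1/\T$ balances the $\lambda/\eta\propto\alpha^{-1/2}$ gain against the $(\T+\alpha^{-1})^{-3/2}$ loss and produces the stated constant $1/4$. Everything else is routine scaling arithmetic; I note in passing that~\eqref{eqn:distance-convex-lower} can alternatively be deduced from the classical $\epsilon$-suboptimality lower bound, since an $\epsilon$-stationary point of any $q\in\QuadClassD$ is $\epsilon D$-suboptimal.
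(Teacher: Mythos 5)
Your proposal is correct and follows essentially the same route as the paper: the same reduction via Proposition~\ref{prop:recap-det-zr} together with $\QuadClass\subset\ConvClass$, the same scaled copy of $\fhardConv$ with the same choices $\alpha=1/\T$ (resp.\ $\alpha=1$) and the same resulting constants. The only difference is cosmetic — you saturate the class constraints first and then solve for $\T$, whereas the paper fixes the scale $\sigma$ from the gradient condition first — and your explicit handling of the $\T=0$ edge case is a harmless addition.
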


Let us discuss Theorem~\ref{thm:convex-lb} briefly.  \citet{Nesterov12b}
shows that for any $f \in \ConvClassD$, accelerated gradient descent applied
to a regularized version of $f$ yields a point $x$ satisfying $\norm{\nabla
  f(x)} \le \epsilon$ after at most $O(\sqrt{\SmGrad
  D}\epsilon^{-1/2}\log\frac{\SmGrad D}{\epsilon})$ iterations.  For 
  $f\in\ConvClass$,
a similar technique to Nesterov's, which we provide for completeness in
Appendix~\ref{sec:app-convex-upper-bound}, yields an upper complexity bound
of $O(\sqrt{\SmGrad \DeltaF}\epsilon^{-1}\log\frac{\SmGrad
  \Delta}{\epsilon^2})$. Thus, to within logarithmic
factors both bounds of Theorem~\ref{thm:convex-lb} are sharp.
It is illustrative to compare Theorem~\ref{thm:convex-lb} to our results for
\emph{non-convex} but smooth functions, and we do so in detail in
Sec.~\ref{sec:conclusion-implications}. The comparison shows that finding
stationary points of smooth convex functions with first-order methods is
fundamentally easier than finding stationary points of non-convex functions,
even with higher-order smoothness and using higher-order methods.

While we prove our lower bounds for the algorithm classes $\AlgDetFO$ and
$\AlgZRFO$, similar lower bounds apply to the collection $\AlgRand$ of
all randomized algorithms based on arbitrarily high-order 
derivatives, when applied to worst-case functions from function class
$\ConvClass$. While this is not our focus here,
using the techniques of \citet{WoodworthSr16}
and Section~\exref{sec:fullder-random}, it is possible to construct a
distribution $\P$ on $\ConvClass$ such that for \emph{any} $\alg \in
\AlgRand$, with high probability over $f \sim \P$ we have $\TimeEps{\alg}{f}
\gtrsim \sqrt{\SmGrad \DeltaF} \epsilon^{-1}$.  That is, neither
randomization nor higher-order derivative information can improve
performance on $\ConvClass$.
Such an extension fails for $\QuadClass$, as Newton's method
finds the global minimizer of every $f \in \QuadClass$ in one step. We
know of no tight lower bounds on the complexity of randomized
first-order methods for $\QuadClass$.

\subsection{Proof of Theorem~\ref{thm:convex-lb}}

As we outline in Section~\ref{sec:recap-anatomy}, we establish our lower
bounds by constructing a zero-chain $f:\R^\T\to\R$ such that
$f\in\QuadClass$ (or $\QuadClassD$), and that $\norms{\grad f(x)} >
\epsilon$ for any $x$ such that $x_\T=0$. By
Observation~\ref{obs:recap-zero-chain} we immediately have that for every
$\alg\in\AlgZRFO$, the iterates $\{x\ind{t}\}_{t\in\N} = \alg[f]$ produced
by $\alg$ operating on $f$ satisfy $x\ind{t}_\T =0$ for every $t\le\T$ and
hence $\norms{\grad f(x\ind{t})} > \epsilon$. Consequently,
$\inf_{\alg\in\AlgZRFO}\TimeEps{\alg}{f} \ge 1+\T$, which implies lower bounds
on the required quantities by means of 
$\QuadClass \subset \ConvClass$ and
Proposition~\ref{prop:recap-det-zr}.

To define the difficult zero-chain $f$, we scale $\fhardConv$ using
two scalar parameters $\lambda, \sigma > 0$, which we determine
later, defining
\begin{equation*}
  f(x) \defeq \lambda \sigma^2 \fhardConv(x/\sigma).
\end{equation*} 
We use the parameter $\lambda > 0$ to control the first-order smoothness
of $f$, as $\nabla^2 f(x) = \lambda \nabla^2 \fhardConv(x/\sigma)$, while
the parameter $\sigma$ controls the lower bound on $\norm{\grad f(x)}$ for 
$x_T=0$. We first show how to
choose $\sigma$, depending on $\T$, $\epsilon$, $\alpha$, and $\lambda$. 
By
Lemma~\ref{lem:convex-props}.\ref{item:convex-hard}, for every $x$ with 
$x_T=0$ we have
\begin{equation*}
  \norm{\grad f(x)}
  = \lambda\sigma \norm{\grad \fhardConv(x)}
  > \frac{\lambda\sigma}{(T-1+\frac{1}{\alpha})^{3/2}}.
\end{equation*}
Setting
\begin{equation*}
  \sigma = \frac{1}{\lambda}\left(T-1+\frac{1}{\alpha}\right)^{3/2}\epsilon,
\end{equation*}
guarantees $\norms{\grad f(x)} > \epsilon$ for any $x$ such that
$x_\T=0$ and hence $\norms{\grad f(x\ind{t})} > \epsilon$ for all $t\le \T$.

All that remains is to choose $\lambda, \T$, and $\alpha$ to guarantee that
$f$ belongs to the appropriate quadratic class.  By
Lemma~\ref{lem:convex-props}.\ref{item:convex-fclass}, $f$ has $4\lambda$
Lipschitz gradient, so we take
\begin{equation*}
  \lambda = \SmGrad/4
\end{equation*}
and guarantee that $f$ has $\SmGrad$-Lipschitz gradient.  To guarantee that
$f\in \QuadClass$,
Lemma~\ref{lem:convex-props}.\ref{item:convex-fclass} yields
\begin{equation*}
  f(0) - \inf_x f(x) = \lambda \sigma^2 \alpha/2 =
  \frac{2\alpha}{\SmGrad}\left(T-1+\frac{1}{\alpha}\right)^{3}\epsilon^2,
\end{equation*}
where we have substituted our choice of $\sigma$ and $\lambda$ in the final
equality. Defining
\begin{equation*}
  \alpha = 1/T \le 1 ~~\mbox{we obtain}~~
  f(0) - \inf_x f(x) \le 16 T^2 \epsilon^2 / \SmGrad,
\end{equation*}
so to guarantee $f(0) - \inf_x f(x) \le \DeltaF$, it suffices to choose
\begin{equation*}
  \T = \floor{\frac{\sqrt{\SmGrad\DeltaF}}{4}\epsilon^{-1}}.
\end{equation*}
This gives the first part~\eqref{eqn:fval-convex-lower} of the theorem.  For
inequality~\eqref{eqn:distance-convex-lower}, we must have $f\in
\QuadClassD$. Let $x\opt = \sigma\mathbf{1}$ denote the minimizer of $f$, so
that
\begin{equation*}
  \norm{x^\star} = \sigma\sqrt{\T} =  \frac{4}{\SmGrad}\left(T-1+\frac{1}{\alpha}\right)^{3/2}\epsilon\sqrt{T},
\end{equation*}
where again we have substituted our choices of $\sigma$ and $\lambda$ in the
final equality. Consequently, to guarantee $\norm{x\opt} \le D$ it
suffices to take
\begin{equation*}
  \alpha = 1 ~~\mbox{and}~~
  \T = \floor{\frac{\sqrt{\SmGrad D}}{2}\epsilon^{-1/2}},
\end{equation*}
giving the bound~\eqref{eqn:distance-convex-lower}.

\section{Constructing the non-convex hard 
instance}\label{sec:firstorder_construction}

We now relax the assumption of convexity, and design a first-order zero-chain 
that provides bounds stronger than those of
Theorem~\exref{thm:fullder-final}, when we
restrict the algorithm class to first-order methods. The basis of our
construction is the convex zero-chain~\eqref{eq:fhard-conv-def}, which we
augment with non-convexity to strengthen the gradient lower bound in
Lemma~\ref{lem:convex-props}.\ref{item:convex-hard}, while ensuring that all
derivatives remain Lipschitz continuous. With this in mind, for each $T \in
\N$, we define the unscaled hard instance $\fhardFO: \R^{\T+1} \to \R$ as
\begin{equation}
  \label{eq:fhard-def}
  \fhardFO(x) = \frac{\sqrt{\mu}}{2}(x_1 - 1)^2 
  + \frac{1}{2 }\sum_{i=1}^{\T} (x_{i+1} - x_i)^2 
  + \mu \sum_{i=1}^{\T} \Ups(x_i).
\end{equation}
where the non-convex function $\Ups:\R\to\R$, parameterized by $r\ge1$, is
\begin{equation}
\label{eq:ups-def}
\Ups(x) = 120\int_1^x \frac{t^2(t-1)}{1+(t/r)^2}dt.
\end{equation}
We illustrate the construction $\fhardFO$ in Figure~\ref{fig:construction}; it is  
the sum of the convex hard instance~\eqref{eq:fhard-conv-def} (with 
$\alpha=\sqrt{\mu}$) and a separable non-convex function. In the following 
lemma, which we prove in Appendix~\ref{sec:props-proof}, we list the 
important properties of $\Ups$.

\begin{figure}
	\begin{minipage}{0.5\textwidth}
		\begin{center}
			\includegraphics[width=1\textwidth]{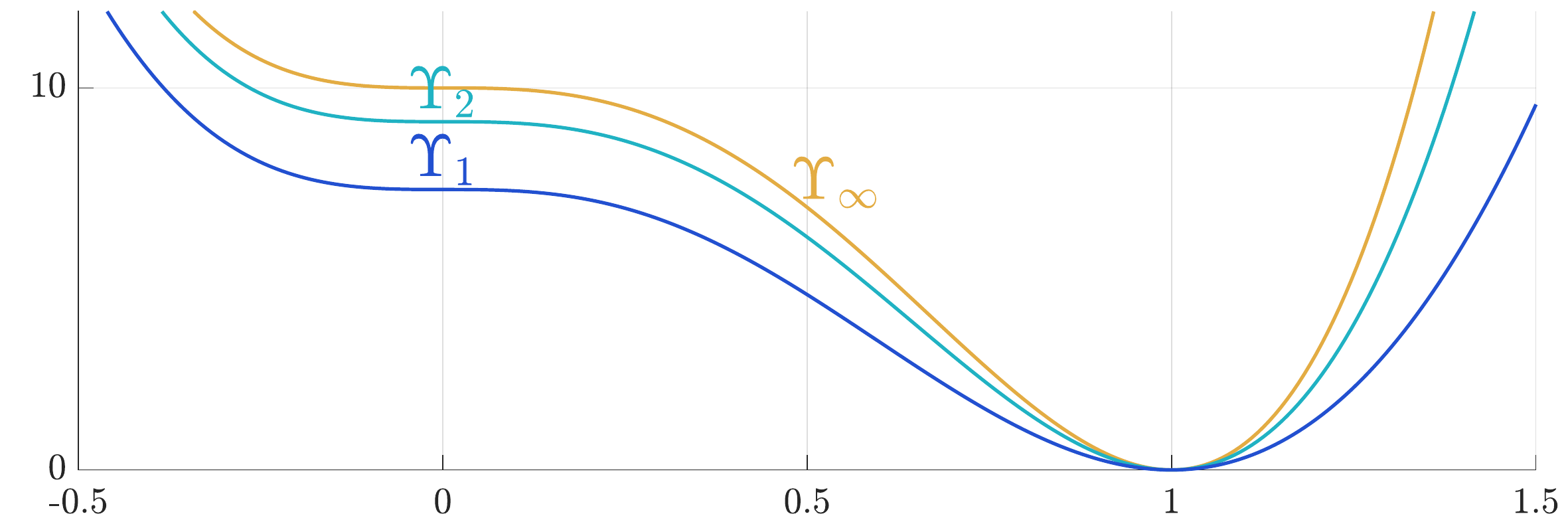}
			
			\includegraphics[width=1\textwidth]{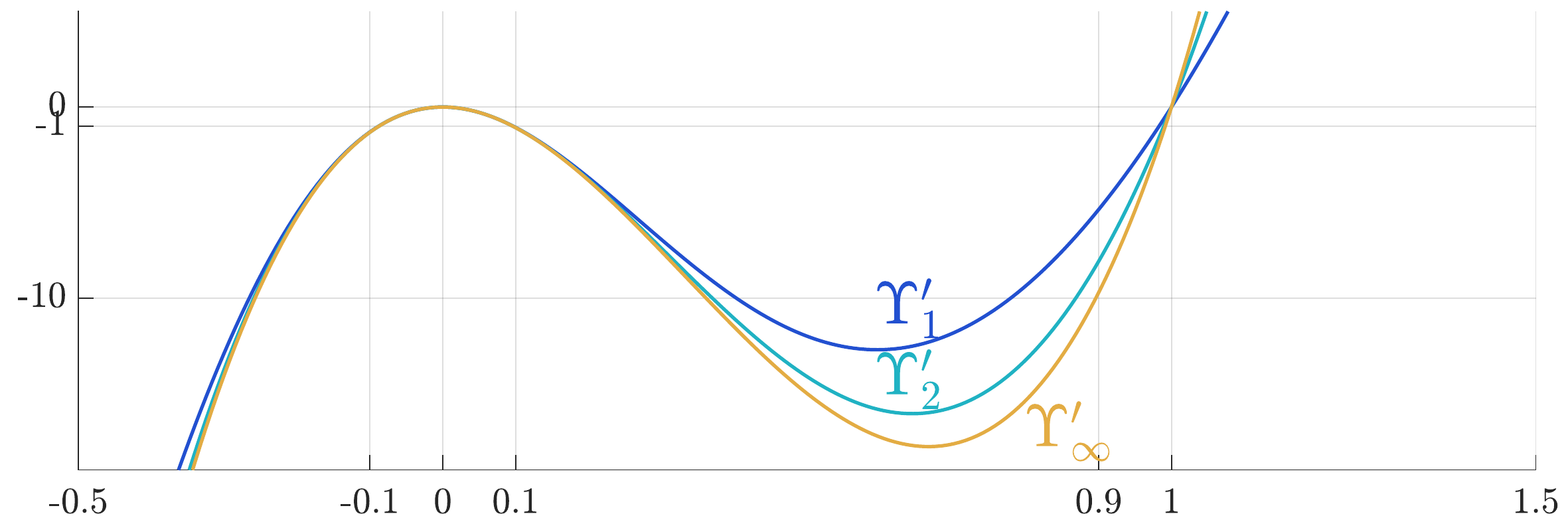}
			
		\end{center}
	\end{minipage} %
	\begin{minipage}{0.5\textwidth}
		\begin{center}
			\includegraphics[width=1\textwidth]{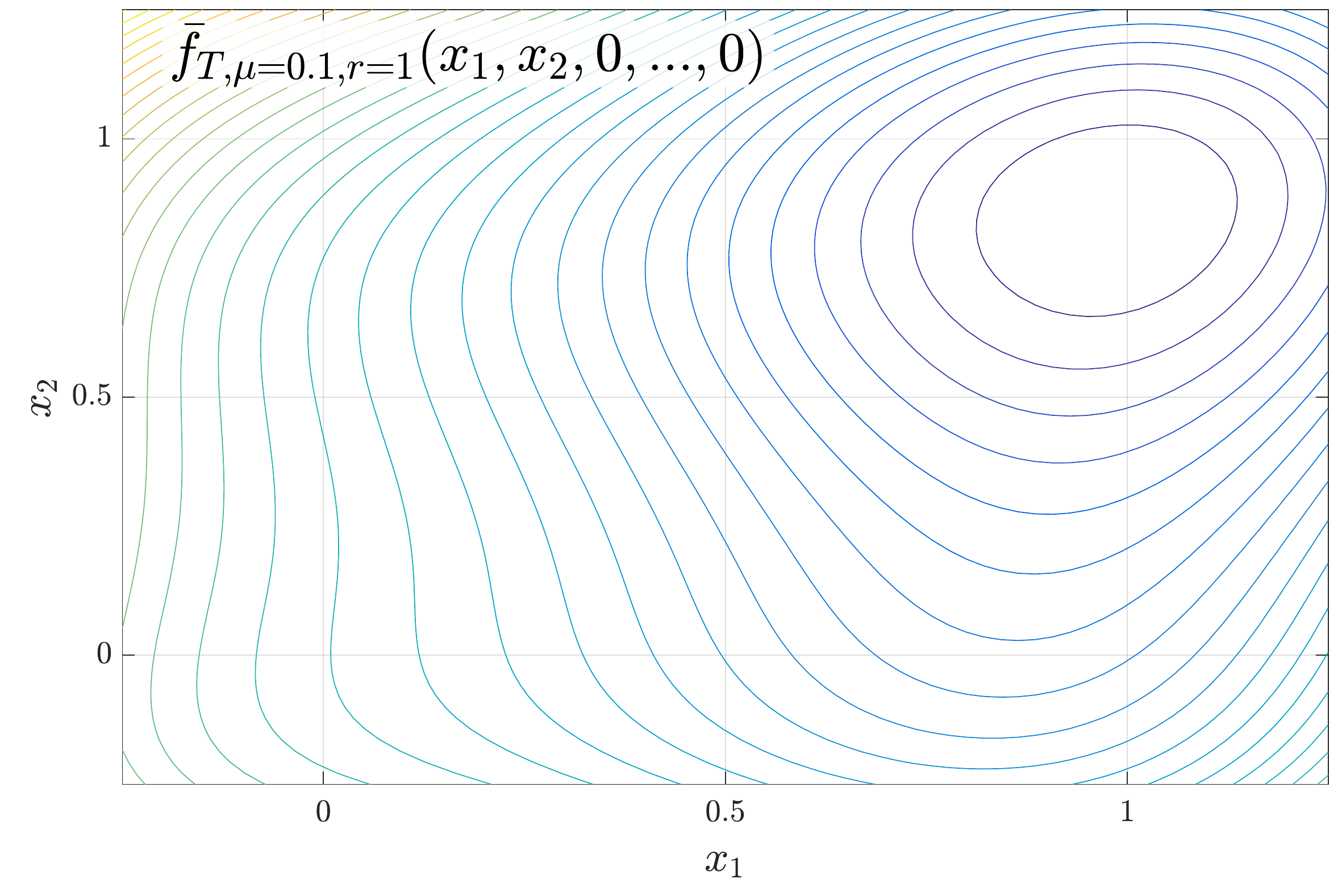}
		\end{center}
	\end{minipage} 
	\vspace{6pt}
	\caption{\label{fig:construction} Hard instance for first-order
		methods. Left: the non-convexity $\Ups$ (top) and its derivative
		(bottom), for different values of $r$. Right: Contour plot of a
		two-dimensional cross-section of the hard instance $\fhardFO$. }
\end{figure}

\begin{restatable}{lemma}{lemUpsProps}
  \label{lem:props}
   The function $\Ups$ satisfies the following.
  \begin{enumerate}[i.]
  \item \label{item:ups-grad-zero}
    We have $\Ups'(0) = \Ups'(1) = 0$
  \item \label{item:ups-quasi-convex}
    For all $x \le 1$, $\Ups'(x) \le 0$, and
    for all $x \ge 1$, $\Ups'(x) \ge 0$.
  \item \label{item:ups-min-one} For all $x \in \R$ we have $\Ups(x) \ge
    \Ups(1) = 0$, and for all $r$, $\Ups(0) \le 10$.
  \item \label{item:ups-large-grad}
    For every $r\ge 1$, $\Ups'(x) < -1$ for every $x\in (-\infty,-0.1]
    \cup [0.1,0.9]$
  \item \label{item:ups-c-infinity} For every $r \ge 1$ and every $p \ge 1$,
    the $p$-th order derivatives of $\Ups$ are $r^{3-p}\smC{p}$-Lipschitz
    continuous, where $\smC{p} \le \exp(\frac{3}{2}p \log p + c p)$ for a
    numerical constant $c < \infty$.
  \end{enumerate}
\end{restatable}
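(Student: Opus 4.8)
The plan is to verify each of the five claimed properties of $\Ups$ directly from its integral representation~\eqref{eq:ups-def}, mostly via elementary calculus. Write $\Ups'(x) = 120 \frac{x^2(x-1)}{1+(x/r)^2}$, which is the key object for properties~\ref{item:ups-grad-zero}, \ref{item:ups-quasi-convex}, and~\ref{item:ups-large-grad}. For~\ref{item:ups-grad-zero}: $\Ups'(0)=0$ because of the $x^2$ factor in the numerator, and $\Ups'(1)=0$ because of the $(x-1)$ factor. For~\ref{item:ups-quasi-convex}: the denominator $1+(x/r)^2$ is always positive and $x^2\ge 0$, so the sign of $\Ups'(x)$ is exactly the sign of $(x-1)$, which is $\le 0$ for $x\le 1$ and $\ge 0$ for $x\ge 1$. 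For~\ref{item:ups-min-one}: since $\Ups'$ changes sign only at $x=1$ (being $\le 0$ before and $\ge 0$ after), $x=1$ is the global minimizer, and $\Ups(1) = 120\int_1^1(\cdot)\,dt = 0$; the bound $\Ups(0)\le 10$ requires estimating $\Ups(0) = 120\int_1^0 \frac{t^2(t-1)}{1+(t/r)^2}\,dt = 120\int_0^1 \frac{t^2(1-t)}{1+(t/r)^2}\,dt \le 120\int_0^1 t^2(1-t)\,dt = 120\cdot\frac{1}{12} = 10$, using $1+(t/r)^2\ge 1$ — this works uniformly in $r$.

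For~\ref{item:ups-large-grad}, I would show $|\Ups'(x)| = 120\frac{x^2|x-1|}{1+(x/r)^2} > 1$ on the two intervals. The worry is that the denominator grows with $x$; but the numerator $x^2|x-1|$ grows faster, and on the bounded interval $[0.1,0.9]$ everything is controlled. On $[0.1,0.9]$: $x^2\ge 0.01$, $|x-1|\ge 0.1$, and $1+(x/r)^2 \le 1+x^2 \le 1+0.81 < 2$ (using $r\ge 1$), so $|\Ups'(x)| \ge 120\cdot 0.01\cdot 0.1/2 = 0.06$ — this is not enough, so I need the sharper estimate that the expression is actually minimized near the endpoints and is well above $1$ there; a careful computation of $\min_{x\in[0.1,0.9]} x^2(x-1)/(1+x^2)$ (note the worst case is $r=1$ for the denominator) gives a value whose $120$-multiple exceeds $1$. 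For $x\le -0.1$: here $x^2|x-1| = x^2(1-x) \ge x^2\cdot 1.1$, and $x^2/(1+(x/r)^2) = r^2 \cdot \frac{(x/r)^2}{1+(x/r)^2}$, which for $r\ge 1$ and $|x|\ge 0.1$ is bounded below by $\frac{(0.1)^2}{1+(0.1)^2}$ (increasing in $r$), so $|\Ups'(x)| \ge 120\cdot 1.1 \cdot \frac{0.01}{1.01} > 1$. These are routine interval arithmetic checks; I would tabulate the constants carefully rather than reproduce them here.

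The main obstacle is property~\ref{item:ups-c-infinity}: bounding the Lipschitz constants of all derivatives with the stated super-exponential growth $\smC{p}\le\exp(\frac32 p\log p + cp)$ and the correct $r^{3-p}$ scaling. The natural approach is to write $\Ups(x) = 120\int_1^x g(t)\,dt$ with $g(t) = \frac{t^2(t-1)}{1+(t/r)^2}$, so the $p$th derivative of $\Ups$ is $120\, g^{(p-1)}$, and I need $\|g^{(p-1)}\|_{\mathrm{Lip}} = \|g^{(p)}\|_\infty \lesssim r^{3-p}\smC{p}/120$. Decompose $g$ via partial fractions: $\frac{t^2(t-1)}{1+(t/r)^2} = \frac{r^2 t^2(t-1)}{r^2+t^2}$; polynomial division gives $r^2 t^2(t-1)/(r^2+t^2) = r^2(t - 1) + (\text{lower-degree poly in }t) + \frac{(\text{const})\,r^4(\cdots)}{r^2+t^2}$, i.e. a cubic polynomial plus a rational remainder of the form $\frac{a r^4 + b r^2 t}{r^2+t^2}$ for numerical $a,b$. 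The polynomial part has vanishing derivatives beyond order $3$, contributing nothing for $p\ge 4$; the rational part is handled by expressing it in terms of $\frac{1}{r^2+t^2} = \frac{1}{r}\cdot\frac{1/r}{1+(t/r)^2}$ and using the known bound that the $k$th derivative of $\frac{1}{1+s^2}$ is bounded by $k!$ times a constant (roots at $\pm i$ give, via the partial fraction $\frac{1}{1+s^2} = \frac{1}{2i}(\frac{1}{s-i}-\frac{1}{s+i})$, that $|\frac{d^k}{ds^k}\frac{1}{1+s^2}| \le k!$), together with the chain rule substitution $s = t/r$ which produces the factor $r^{-k}$. Tracking the $r$ powers through $r^2\cdot r^{?}\cdot r^{-k}$ and the degrees carefully yields the $r^{3-p}$ scaling, while the $k!\approx \exp(k\log k - k)$ from the derivatives of the rational function — combined with the numerical-constant prefactors from $120$ and the partial-fraction coefficients and a possible extra combinatorial factor — gives the $\exp(\frac32 p\log p + cp)$ bound (the $\frac32$ rather than $1$ presumably absorbing an additional source of factorial-type growth, e.g. from needing to bound on all of $\R$ where $|t|$ can be large relative to $r$, forcing a more careful split into regions $|t|\le r$ and $|t|\ge r$). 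This bookkeeping of constants and $r$-powers is the delicate part; everything else is short.
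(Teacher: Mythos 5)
Your proposal is correct. For parts i--iv it matches the paper's proof essentially verbatim: the paper likewise reduces part iv to $r=1$ via $|\Ups'(x)|\ge|\Upsilon_1'(x)|$ and then defers to ``a calculation'' (for the record, the tight spot is $x=0.1$, where $120\cdot\frac{0.01\cdot 0.9}{1.01}\approx 1.07>1$, and your estimate $120\cdot 1.1\cdot\frac{0.01}{1.01}>1.3$ for $x\le -0.1$ is exactly the right computation). For part v your algebraic decomposition is the same as the paper's---dividing $\frac{r^2t^2(t-1)}{r^2+t^2}$ leaves the linear term $r^2(t-1)$ plus the remainder $-r^3\varphi_1(t/r)+r^2\varphi_2(t/r)$ with $\varphi_1(\xi)=\xi/(1+\xi^2)$ and $\varphi_2(\xi)=1/(1+\xi^2)$, and the $r^{3-p}$ scaling falls out of the chain rule exactly as you describe---but you bound the derivatives of the rational pieces differently. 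The paper writes $\varphi_2=h\circ g$ with $h(x)=1/x$, $g(x)=1+x^2$, applies Fa\`a di Bruno, and counts partitions into blocks of size at most two (telephone numbers), yielding $|\varphi_2^{(k)}|\le e^{\frac{3k}{2}\log k+2k\log 2}$. Your complex partial-fraction route, $\frac{1}{1+s^2}=\frac{1}{2i}\left(\frac{1}{s-i}-\frac{1}{s+i}\right)$ and $\frac{s}{1+s^2}=\frac12\left(\frac{1}{s-i}+\frac{1}{s+i}\right)$ with $|s\pm i|\ge 1$ for real $s$, gives the cleaner and strictly sharper bound $k!$ uniformly on $\R$, with no need for the split into $|t|\le r$ and $|t|\ge r$ that you worry about at the end. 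Indeed, your closing speculation about where the exponent $\frac32$ comes from is backwards: it is an artifact of the paper's cruder telephone-number count, and your method shows $\smC{p}$ may be taken of order $p!=e^{p\log p+O(p)}$, which of course still satisfies the stated bound. The only slips are cosmetic (the polynomial part of the division is linear, and the remainder's numerator is $r^4(1-t)$ rather than $ar^4+br^2t$); neither affects the argument.
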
 

Before formally stating the properties of $\fhardFO$, we provide a
high-level explanation of the choice of $\Ups$. First, a necessary and
sufficient condition for $\fhardFO$ to be a first-order zero-chain is that
$\Ups'(0)=0$. Second, examining the proof
Lemma~\ref{lem:convex-props}.\ref{item:convex-hard} we see that the gradient
of the quadratic chain is smallest for vectors $x$ with entries $x_1, x_2,
..., x_\T$ that slowly decrease from 1 to 0. We design $\Ups$ to ``punish''
such slowly varying vectors, by demanding that $\Ups'(x)$ be large for any
$x$ far from both 0 and 1 (Lemma~\ref{lem:props}.\ref{item:ups-large-grad});
this is the key to improving
Lemma~\ref{lem:convex-props}.\ref{item:convex-hard} and the most important
property of $\Ups$. Third, for every finite $r$ all the derivatives of
$\Ups$ are Lipschitz, and as $r$ increases $\fhardFO$ converges to a quartic
polynomial; in the limit $r=\infty$ we have $\Upsilon_\infty(x) = 30x^4 -
40x^3 +10$. This allows us to establish that Lipschitz continuity of
derivatives beyond the third does not alter the $\epsilon$ dependence of our
bounds. However, we cannot simply use $\Upsilon_\infty$, as its first three
derivatives are unbounded. Lastly, we place the minimum of $\Ups(x)$ at $x=1$,
so that the all-ones vector is the global minimizer of $\fhardFO$, and
$\fhardFO(\ones)=0$; this is simply convenient for our
analysis.

With our considerations explained, we verify the three components of our
general strategy: $f$ is a first-order zero-chain, belongs to the relevant
function classes, and has large gradient whenever $x_\T = 0$. We begin with
the zero-chain property, which follows trivially from
Lemma~\ref{lem:props}.\ref{item:ups-grad-zero}.

\begin{observation}\label{obs:span}
  For any $\T\in\N$, and positive $\mu$ and $r$, $\fhardFO$ is a first-order
  zero-chain.
\end{observation}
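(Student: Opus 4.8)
The plan is to verify Definition~\ref{def:recap-zero-chain} directly from the formula~\eqref{eq:fhard-def}. Writing out the partial derivatives of $\fhardFO$, one finds $\grad_1 \fhardFO(x) = \sqrt{\mu}(x_1-1) + (x_1-x_2) + \mu\Ups'(x_1)$, then $\grad_j\fhardFO(x) = (2x_j - x_{j-1} - x_{j+1}) + \mu\Ups'(x_j)$ for $2 \le j \le \T$, and finally $\grad_{\T+1}\fhardFO(x) = x_{\T+1} - x_\T$ (the last coordinate carries no $\Ups$ term, since the non-convex sum runs only to $\T$). Thus each $\grad_j\fhardFO(x)$ with $j \ge 2$ depends only on the three consecutive entries $x_{j-1}, x_j, x_{j+1}$ together with $\Ups'(x_j)$, and the one structural input I would use is $\Ups'(0) = 0$, which is exactly Lemma~\ref{lem:props}.\ref{item:ups-grad-zero}.

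Given this, the argument is short. Fix $x$ with $\support{x}\subseteq\{1,\dots,i-1\}$ and take any $j \ge i+1$. If $2\le j\le\T$, then the indices $j-1,j,j+1$ are all at least $i$, so $x_{j-1}=x_j=x_{j+1}=0$, and since $\Ups'(0)=0$ we get $\grad_j\fhardFO(x)=0$. If $j=\T+1$ then $\T\ge i$, so $x_\T = x_{\T+1}=0$ and again $\grad_j\fhardFO(x)=0$. (The remaining possibility $j=1\ge i+1$ only arises when $i\le 0$, which is vacuous for coordinate indices; even the boundary case $i=1$ forces $x=0$, for which $\grad_j\fhardFO(0)=0$ for all $j\ge 2$.) Hence $\support{\grad\fhardFO(x)}\subseteq\{1,\dots,i\}$, which is precisely the first-order zero-chain condition.

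Equivalently, and perhaps more transparently, one may note that $\fhardFO$ is the sum of the convex quadratic chain $\fhardConv[\T+1,\sqrt{\mu}]$ from~\eqref{eq:fhard-conv-def} — which is a first-order zero-chain by the same reasoning as in Lemma~\ref{lem:convex-props}.\ref{item:convex-zero-chain} (that part of the lemma uses only that $\grad_j$ of the chain vanishes when $x_{j-1}=x_j=x_{j+1}=0$, independently of the size of $\alpha$) — and the coordinate-separable perturbation $\mu\sum_{i=1}^{\T}\Ups(x_i)$; adding a separable function all of whose one-dimensional summands have vanishing derivative at the origin preserves the zero-chain property, because the $j$-th partial of the perturbation at $x$ equals $\mu\Ups'(x_j)$ (or $0$ for $j=\T+1$), which is $0$ whenever $x_j=0$. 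There is no real obstacle here: the only thing to watch is the index bookkeeping — in particular that the $+1$ shift in ``$\support{x}\subseteq\{1,\dots,i-1\}$ implies $\support{\grad\fhardFO(x)}\subseteq\{1,\dots,i\}$'' is exactly matched by the nearest-neighbour coupling of the quadratic chain, and that the non-convex term is indexed only up to $\T$.
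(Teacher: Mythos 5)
Your proof is correct and takes the same route the paper intends: the paper simply notes the observation ``follows trivially from Lemma~\ref{lem:props}.\ref{item:ups-grad-zero}'' (i.e.\ $\Ups'(0)=0$ together with the tridiagonal structure of the quadratic chain, exactly as in Lemma~\ref{lem:convex-props}.\ref{item:convex-zero-chain}), and your explicit coordinate-by-coordinate verification fills in precisely that computation.
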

\noindent
Crucially, $\fhardFO$ is \emph{only} a first-order zero-chain (see 
Definition~\exref{def:zero-chain}); were it a second-order zero-chain, the 
resulting lower bounds would apply to
second-order algorithms as well, where Newton's method achieves the rate 
$\epsilon^{-3/2}$~\cite{NesterovPo06}, which is
strictly better than all of our lower bounds.
We next show that any point $x$ for which $x_T=x_{T+1}=0$ has large 
gradient. This is the core technical result of our analysis.

\begin{restatable}{lemma}{lemFirstorderGradbound}\label{lem:gradbound}
  Let $r\ge 1$ and $\mu \le 1$.  For any $x\in \R^{\T+1}$ such that $x_T =
  x_{T+1} = 0$,
  \begin{equation*}
    \norm{\grad \fhardFO(x)} > \mu^{3/4}/4.
  \end{equation*}
\end{restatable}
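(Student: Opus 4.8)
The plan is to mimic the least-squares computation in the proof of Lemma~\ref{lem:convex-props}.\ref{item:convex-hard}, but now accounting for the non-convex term $\mu\sum_i\Ups(x_i)$. Writing out the gradient of $\fhardFO$ coordinatewise, for $x$ with $x_T=x_{T+1}=0$ the coordinates $i\in\{1,\dots,T\}$ give
\begin{equation*}
  \grad_i\fhardFO(x) = \left(Lx\right)_i - \sqrt{\mu}\,e\ind{1}_i + \mu\,\Ups'(x_i),
\end{equation*}
where $L$ is the path Laplacian of \eqref{eqn:laplacian} with $\alpha=\sqrt{\mu}$ (restricted to the first $T$ coordinates, using $x_{T+1}=0$). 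The key idea is to lower bound $\norm{\grad\fhardFO(x)}$ by projecting onto the same distinguished direction $z\in\R^T$ that annihilates $M^\top$ in the convex proof, i.e.\ the vector with $z_j\propto j-1+1/\sqrt{\mu}$ (using $\alpha=\sqrt\mu$), which is coordinatewise nonnegative and increasing. Taking the inner product of the gradient (extended by zero to coordinate $T+1$, which is legitimate since it only adds a nonnegative contribution to the squared norm) with $z/\norm z$ gives
\begin{equation*}
  \norm{\grad\fhardFO(x)} \ge \frac{1}{\norm z}\left(\inner{z}{Lx - \sqrt{\mu}e\ind{1}} + \mu\sum_{i=1}^T z_i\Ups'(x_i)\right) = \frac{1}{\norm z}\left(\sqrt{\mu}\,z_1 + \mu\sum_{i=1}^T z_i\Ups'(x_i)\right),
\end{equation*}
using $M^\top z=0$ and $z^\top b = \sqrt\mu z_1$ exactly as before.

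\textbf{Handling the non-convex sum.} The heart of the argument is to show the bracketed quantity is bounded below by something of order $\mu$ times a quantity that, after dividing by $\norm z = \Theta(T^{3/2}/\sqrt\mu)$ roughly, still beats $\mu^{3/4}/4$ — which forces the choice of scaling relating $T$, $\mu$ and the eventual $\epsilon$. Since $z_i\ge 0$, Lemma~\ref{lem:props}.\ref{item:ups-quasi-convex} tells us $z_i\Ups'(x_i)\ge 0$ whenever $x_i\ge 1$, and it is only negative for $x_i<1$. I would split the index set according to whether $x_i$ lies in the ``good'' region near $0$ or $1$, or in the ``bad'' region $[0.1,0.9]\cup(-\infty,-0.1]$ where Lemma~\ref{lem:props}.\ref{item:ups-large-grad} gives $\Ups'(x_i)<-1$. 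If many indices fall in the bad region, the term $\mu\sum z_i\Ups'(x_i)$ is very negative, but then the quadratic-chain part $\inner{z}{Lx}$ compensates — exactly as in the convex proof, where small gradient forced $x$ to decrease slowly from $1$ to $0$ and hence to spend many coordinates in the bad region. The cleanest way to capture this trade-off is to argue directly about $\norm{\grad\fhardFO(x)}$: either there is some coordinate $i$ where $|\grad_i\fhardFO(x)|$ alone is already $\ge\mu^{3/4}/4$ (e.g.\ because $x_i$ is in the bad region and the other two terms don't cancel the $\mu\Ups'(x_i)$ contribution since $\mu\le1$ makes $\mu|\Ups'|$ dominate... but one must be careful, since $(Lx)_i$ can be large), or no such coordinate exists, in which case all $x_i$ are close to $\{0,1\}$ and one runs the Laplacian/least-squares bound on the ``rounded'' sequence.

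\textbf{Main obstacle.} The principal difficulty is precisely this case analysis: near-stationary points of $\fhardFO$ need not have every coordinate near $0$ or $1$, because the chain couples the coordinates and the term $(Lx)_i$ can partially cancel $\mu\Ups'(x_i)$. I expect the right approach is a potential/telescoping argument: consider the first index $i^\star$ with $x_{i^\star}$ not close to $1$ (exists since $x_T=0\neq 1$ when $T\ge1$), and the last index before the sequence settles near $0$; on the ``transition'' block, either consecutive differences $x_{i+1}-x_i$ are individually of order $\mu^{3/4}$ (done, via the quadratic term), or the block is long — spanning $\gtrsim 1/\mu^{3/4}$ indices — and contains a coordinate in the bad region $[0.1,0.9]$, for which I then show $|\grad_i\fhardFO|\ge\mu^{3/4}/4$ by checking that neither $(Lx)_i$ (controlled by the small consecutive differences) nor the $\sqrt\mu e\ind1$ term can offset $\mu\Ups'(x_i)$. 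Getting the constants to line up so the final bound is exactly $\mu^{3/4}/4$ (rather than some smaller constant times $\mu^{3/4}$) will require choosing the region thresholds and the "close to $\{0,1\}$" tolerance carefully, balancing Lemma~\ref{lem:props} parts \ref{item:ups-quasi-convex} and \ref{item:ups-large-grad} against the Laplacian estimate $\opnorm{L}\le 4$ — this bookkeeping, rather than any conceptual step, is where the real work lies.
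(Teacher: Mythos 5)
Your overall instinct---a transition-region dichotomy trading the quadratic chain against the $\Ups$ terms---is the right one, and it is indeed what the paper does. But both concrete routes you propose have gaps that would sink the argument.

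First, the global projection onto $z\in\R^{\T}$ with $z_j\propto j-1+1/\sqrt{\mu}$ cannot work as a main bound. For that direction, $\sqrt{\mu}\,z_1/\norm{z} \asymp (\T+1/\sqrt{\mu})^{-3/2}$, which vanishes as $\T\to\infty$; and for the vector $x$ with $x_i=1$ for $i<\T/2$ and $x_i=0$ thereafter, every $\Ups'(x_i)=0$ (Lemma~\ref{lem:props}.\ref{item:ups-grad-zero}), so your correction term is identically zero and the projection certifies only $\T^{-3/2}\ll\mu^{3/4}/4$. (The lemma is of course still true for this $x$---the second difference at the jump gives a gradient coordinate of magnitude $1$---but a single fixed global direction cannot see it.) The fix, which is what the paper does, is to \emph{localize}: drop all gradient coordinates outside the transition block of length $m$, and run the least-squares projection with $z\in\R^{m}$, $z_j\propto j-1+1/\alpha$, on that block alone. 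Then the denominator is $\asymp(m+1/\sqrt{\mu})^{3/2}$ rather than $(\T+1/\sqrt{\mu})^{3/2}$, and the boundary terms of the block contribute a constant ($\ge 3/5$ after the careful definition of where the block ends) independent of $\T$.

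Second, your fallback dichotomy has the wrong quantitative structure. A single coordinate in the bad region $[0.1,0.9]$ contributes only $\mu|\Ups'(x_i)|=O(\mu)$ to that gradient coordinate, and $\mu\ll\mu^{3/4}$ for small $\mu$, so no single such index can certify $|\grad_i\fhardFO(x)|\ge\mu^{3/4}/4$. What is actually needed is that a \emph{long} transition (length $m\gtrsim 1/\sqrt{\mu}$, not $1/\mu^{3/4}$) contains $\Theta(m)$ bad-region indices, each contributing $\gtrsim\mu$ coherently (all $\Ups'(x_i)\le 0$ there), so that their weighted sum against the nonnegative, increasing $z$ is $\gtrsim\mu m^2/\norm{z}\asymp\mu\sqrt{m}$; the two regimes cross at $m\asymp\mu^{-1/2}$, which is where $\mu^{3/4}$ comes from. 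Relatedly, ``consecutive differences of order $\mu^{3/4}$'' does not directly bound any gradient coordinate, since the chain's gradient involves second differences $2x_i-x_{i-1}-x_{i+1}$; a linear ramp has large first differences and zero second differences. Finally, you would still need to handle the tail where $x$ keeps decreasing past $0.1$ or below $-0.1$ (the paper's index $\ihi$ and Lemma~\ref{lemma:funny-transition}), and the case where the transition begins at the very first coordinate, where the link strength is $\sqrt{\mu}$ rather than $1$.
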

\noindent
We defer the full proof of this lemma to Appendix~\ref{sec:gradbound-proof}
and sketch its main idea here. We may view any vector meeting the conditions
of the lemma as a sequence going from $x_0 \defeq 1$ to $x_T = 0$. Every
such sequence must have a ``transition region'', which we define roughly as
the subsequence starting after the last $i$ such that $x_i > \frac{9}{10}$
and ending at the first (subsequent) $j$ such that $x_j <\frac{1}{10}$ (see
Figure~\ref{fig:transition-simple}). Letting $m\in\{1,\ldots,\T\}$ denote the
length of this subsequence and ignoring constant factors, we establish that
\begin{equation*}
\norm{\grad \fhardFO(x)} \ge \max\left\{ {\left(m+{1}/{\sqrt{\mu}}\right)^{-3/2}}\,,\,
\mu\sqrt{m}\right\}.
\end{equation*}
The $(m+1/\sqrt{\mu})^{-3/2}$ bound comes from the
quadratic chain in $\fhardFO$, which has large gradient for any
sequence $x$ with sharp transitions; this is essentially Lemma~\ref{lem:convex-props}.\ref{item:convex-hard} with $\T=m$ and $\alpha=\sqrt{\mu}$. The $\mu\sqrt{m}$ bound is due to the
non-convex $\Ups$ terms in $\fhardFO$, which by
Lemma~\ref{lem:props}.\ref{item:ups-large-grad} contribute a
term of magnitude $\mu$ to every entry of $\grad \fhardFO$ in the transition
region. These two bounds intersect at $m \approx 1/\sqrt{\mu}$, so the
gradient has norm at least $\mu^{3/4}$ for every value of $m$.

\begin{figure}
  \begin{center}
    \begin{tabular}{cc}
      \hspace{-.3cm}
      \includegraphics[width=0.48\textwidth]{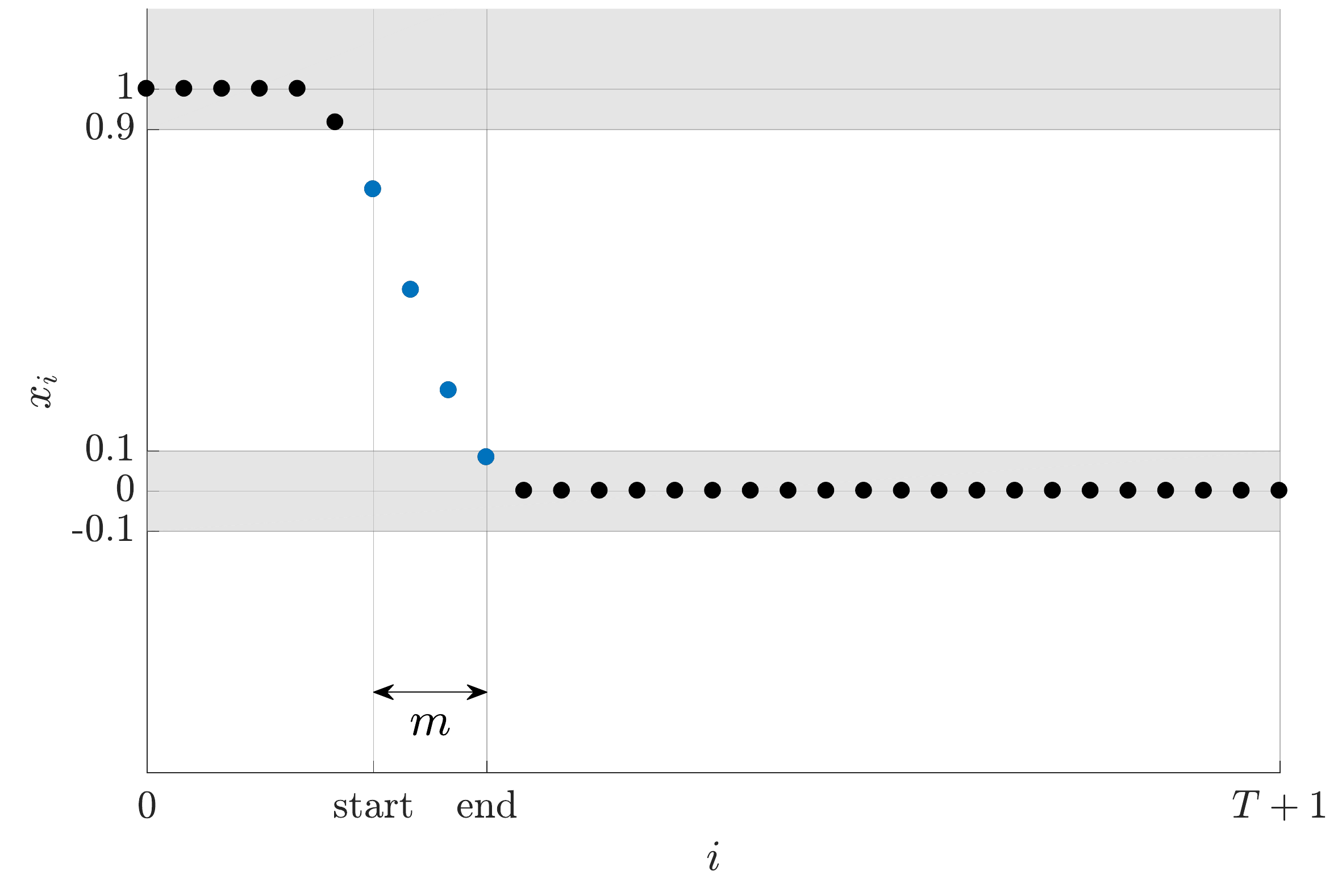} 
      & 
      \hspace{-.3cm}
      \includegraphics[width=0.48\textwidth]{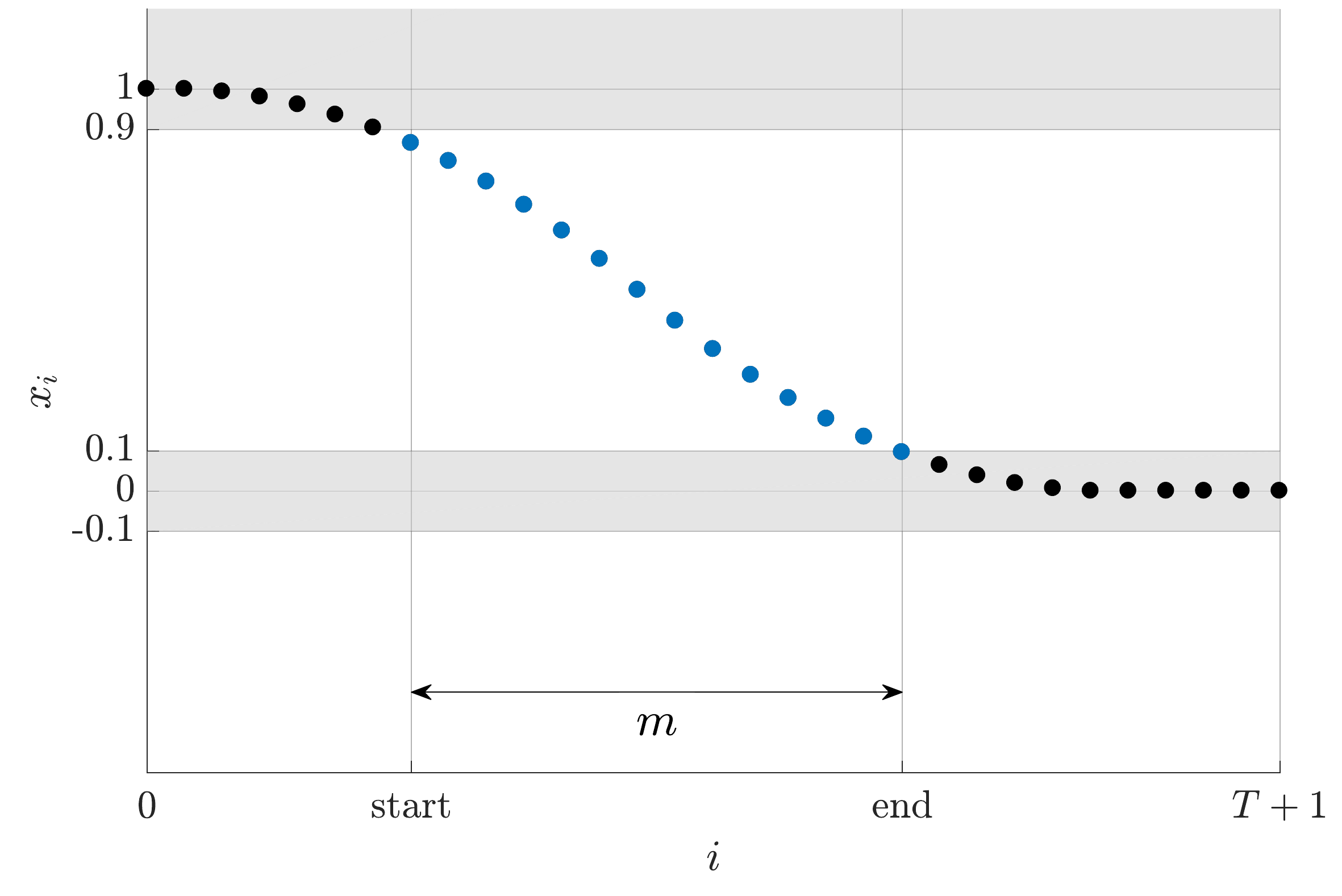} 
    \end{tabular}
    \caption{\label{fig:transition-simple}
      Illustration of the ``transition region'' concept used to prove
      Lemma~\ref{lem:gradbound}. Each plot shows the entries of a
      vector $x\in\R^{T+1}$ that satisfies $x_\T = x_{\T+1} = 0$, with
      entries of $x$ belonging to the transition region marked in
      blue. Short transitions (left) incur large gradients due to the convex
      quadratic term in $\fhardFO$, while long transitions (right) incur
      large gradients to the non-convex $\Ups$ terms and part 4 of
      Lemma~\ref{lem:props}. }
  \end{center}
\end{figure}

Finally, we list the boundedness properties of our construction.
\begin{restatable}{lemma}{lemFirstorderBounded}\label{lem:bounded}
  The function $\fhardFO$ satisfies the following.
  \begin{enumerate}[i.]
  \item \label{item:bounded-funcs}
    $\fhardFO(0) - \inf_x \fhardFO(x) \le \frac{\sqrt{\mu}}{2} + 10\mu T$
  \item \label{item:bounded-derivs}
    For $\mu\le1, r \ge 1$ and every $p\ge1$, the $p$-th order derivatives of
    $\fhardFO$ are $(\indic{p = 1} + r^{3-p}\mu) \smC{p}$-Lipschitz
    continuous, where $\smC{p} \le e^{\frac{3p}{2} \log p + cp}$ for
    a numerical constant $c < \infty$.
  \end{enumerate}
\end{restatable}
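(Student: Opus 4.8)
The plan is to read both parts directly off the formula~\eqref{eq:fhard-def}, using the splitting $\fhardFO = f_{\mathrm q} + g$ into the quadratic ``chain'' piece $f_{\mathrm q}(x)\defeq\frac{\sqrt\mu}{2}(x_1-1)^2+\frac12\sum_{i=1}^{\T}(x_{i+1}-x_i)^2$ --- which is exactly $\fhardConv[\T+1,\sqrt\mu]$ from~\eqref{eq:fhard-conv-def} --- and the separable non-convex piece $g(x)\defeq\mu\sum_{i=1}^{\T}\Ups(x_i)$, combined with the properties of $\Ups$ from Lemma~\ref{lem:props}.

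\emph{Part~\ref{item:bounded-funcs}.} I would first note that $\inf_x\fhardFO(x)=0$: every summand in~\eqref{eq:fhard-def} is nonnegative (the first two are squares, and $\Ups(x_i)\ge\Ups(1)=0$ by Lemma~\ref{lem:props}.\ref{item:ups-min-one}), and all of them vanish at the all-ones vector, so $\fhardFO(\ones)=0=\inf_x\fhardFO(x)$. Evaluating at $x=0$, the squared-difference terms drop out, the first term equals $\tfrac{\sqrt\mu}{2}$, and $\mu\sum_{i=1}^{\T}\Ups(0)\le 10\mu\T$ again by Lemma~\ref{lem:props}.\ref{item:ups-min-one}; adding these gives $\fhardFO(0)-\inf_x\fhardFO(x)=\fhardFO(0)\le\tfrac{\sqrt\mu}{2}+10\mu\T$.

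\emph{Part~\ref{item:bounded-derivs}.} Per the directional definition of Lipschitz $p$th-order smoothness in Section~\ref{sec:recap-funcs}, I would bound the Lipschitz constant of $t\mapsto\tfrac{d^p}{dt^p}\fhardFO(x+tv)$ over all $x\in\R^{\T+1}$ and unit $v$, by adding the contributions of $f_{\mathrm q}$ and $g$. The piece $f_{\mathrm q}=\fhardConv[\T+1,\sqrt\mu]$ has $4$-Lipschitz gradient by Lemma~\ref{lem:convex-props}.\ref{item:convex-fclass} (which applies since $\sqrt\mu\le1$), so along any unit direction its first derivative is $4$-Lipschitz and, being quadratic, its $p$th derivative is constant for $p\ge2$; hence $f_{\mathrm q}$ contributes at most $4\,\indic{p=1}$. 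For the separable piece, $\tfrac{d^p}{dt^p}g(x+tv)=\mu\sum_i\Ups^{(p)}(x_i+tv_i)v_i^p$, and its $t$-derivative has absolute value at most $\mu\,\|\Ups^{(p+1)}\|_\infty\sum_i|v_i|^{p+1}$. Here $\|\Ups^{(p+1)}\|_\infty\le r^{3-p}\smC{p}$ is exactly the Lipschitz bound of Lemma~\ref{lem:props}.\ref{item:ups-c-infinity}, and --- the step that keeps everything dimension-free --- $\sum_i|v_i|^{p+1}\le(\max_i|v_i|)^{p-1}\sum_i v_i^2\le\norm{v}^{p-1}\norm{v}^2=1$ for every $p\ge1$. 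Hence $g$ contributes at most $\mu r^{3-p}\smC{p}$, so the $p$th derivatives of $\fhardFO$ are $\big(4\,\indic{p=1}+\mu r^{3-p}\smC{p}\big)$-Lipschitz. Since the quadratic term contributes only the $O(1)$ constant $4\,\indic{p=1}$, this is of the asserted form $(\indic{p=1}+r^{3-p}\mu)\smC{p}$ after enlarging the numerical constant $c$ slightly (replacing $\smC{p}$ by $\max\{4,\smC{p}\}$, which still satisfies $\smC{p}\le e^{\frac32 p\log p+cp}$).

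The computations are routine; the only two points that need care are (i) the elementary inequality $\sum_i|v_i|^{p+1}\le\norm{v}_2^{p+1}$, which prevents the separable sum from growing with the ambient dimension $\T+1$ --- precisely the property on which the lower-bound strategy of Section~\ref{sec:recap-anatomy} relies --- and (ii) correctly isolating the $p=1$ case, i.e.\ recognizing that the quadratic part affects only first-order smoothness and that its $O(1)$ Hessian norm can be folded into $\smC{1}$. I do not anticipate a genuine obstacle.
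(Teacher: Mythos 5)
Your proposal is correct and follows essentially the same route as the paper: part~(i) via Lemma~\ref{lem:props}.\ref{item:ups-min-one} and evaluation at $0$ and $\ones$, and part~(ii) by splitting off the quadratic chain (4-Lipschitz gradient, $0$-Lipschitz higher derivatives) and invoking Lemma~\ref{lem:props}.\ref{item:ups-c-infinity} for the separable term. The only difference is that you make explicit the dimension-free step $\sum_i|v_i|^{p+1}\le\norm{v}^{p+1}$ and the absorption of the constant $4$ into $\smC{1}$, both of which the paper leaves implicit.
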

\begin{proof}
  The first part of the lemma
  follows from Lemma~\ref{lem:props}, which shows
  that $\inf_x \fhardFO(x) = \fhardFO(\ones) = 0$,
  while $\fhardFO(0) = \sqrt{\mu}/2 + \T\mu\Ups(0) \le \sqrt{\mu}/2 + 10 \mu T $.
  The second part of the lemma follows directly from
  Lemma~\ref{lem:props}.\ref{item:ups-c-infinity} and that the
  quadratic chain $f(x) = \frac{\sqrt{\mu}}{2}  (x_1 - 1)^2 +\half \sum_i (x_i - x_{i +
    1})^2$ has 4-Lipschitz gradient and 0-Lipschitz higher order derivatives. 
\end{proof}

\section{Lower bounds for first-order methods}\label{sec:firstorder-lb}

We now give our main result: lower bounds for the 
complexity of finding $\epsilon$-stationary using the class 
$\AlgDetFO\cup\AlgZRFO$ of first-order deterministic 
and/or zero-respecting algorithms, applied to functions in the class
\begin{equation*}
  \FclassMany{p} \defeq
  \bigcap_{q \le p}\Fclass{q}
\end{equation*}
containing all functions $f:\R^d\to\R$, $d \in \N$, such that $f(0)-\inf_x
f(x) \le \Delta$ and $\deriv{q} f$ is $\Sm{q}$-Lipschitz continuous for
$1 \le q \le p$.

\begin{restatable}{theorem}{thmFirstorderFinal}\label{thm:final}
  There exist numerical constants $c,C \in \R_+$ and $\smC{q} \le e^{\frac{3 q}{2} \log q + C q}$ for every $q\in\N$ such 
  that the following lower bound holds.
  Let $p\ge 2$, $p \in \N$,
  and let $\DeltaF, \SmGrad, \SmHess, \ldots, \Smp, \epsilon$ be
  positive. Assume additionally
  that $\epsilon \le (\SmGrad^q/\Sm{q})^{1/(q-1)}$ for each
  $q\in\{2,\ldots,p\}$. Then
  \begin{equation*}
    \CompEps{\AlgDetFO\cup\AlgZRFO}{\FclassMany{p}} \ge 
    c \cdot \DeltaF \cdot \min_{q\ \in \{2, \ldots, p\} }
    \left\{
    \left(\frac{\SmGrad}{\smC{1}}\right)^{\frac{3}{5}-\frac{2}{5(q-1)}}
    \left(\frac{\Sm{q}}{
      \smC{q}}\right)^{\frac{2}{5(q-1)}}
    \right\} \epsilon^{-8/5}.
  \end{equation*}
  Moreover, for $p=2$, 
  \begin{equation*}
    \CompEps{\AlgDetFO\cup\AlgZRFO}{\FclassTwo{1}{2}} \ge 
    c\cdot\DeltaF \left(\frac{\SmGrad}{\smC{1}}\right)^{\frac{3}{7}} 
    \left(\frac{\SmHess}{\smC{2}}\right)^{\frac{2}{7}} {\epsilon^{-{12}/{7}}}.
  \end{equation*}
\end{restatable}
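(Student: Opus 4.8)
The plan is to instantiate the generic recipe of Section~\ref{sec:recap-anatomy} with a scaled copy of the non-convex chain $\fhardFO$, exactly as in the proof of Theorem~\ref{thm:convex-lb} but now exploiting higher-order smoothness. Introduce four free parameters $\lambda,\sigma>0$ and $\mu\le1,\ r\ge1$ and set $f(x)\defeq\lambda\sigma^{2}\fhardFO(x/\sigma)$ on $\R^{\T+1}$. Since $\deriv{q}f(x)=\lambda\sigma^{2-q}\deriv{q}\fhardFO(x/\sigma)$, the function $f$ is a first-order zero-chain (Observation~\ref{obs:span}), its $q$th derivative is $\lambda\sigma^{1-q}(\indic{q=1}+r^{3-q}\mu)\smC{q}$-Lipschitz (Lemma~\ref{lem:bounded}.\ref{item:bounded-derivs}), it satisfies $f(0)-\inf_x f(x)\le\lambda\sigma^{2}\big(\tfrac12\sqrt\mu+10\mu\T\big)$ (Lemma~\ref{lem:bounded}.\ref{item:bounded-funcs}), and $\norm{\grad f(x)}>\tfrac14\lambda\sigma\mu^{3/4}$ whenever $x_\T=x_{\T+1}=0$ (Lemma~\ref{lem:gradbound}). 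Choosing $\sigma=4\epsilon/(\lambda\mu^{3/4})$ makes the last bound equal to $\epsilon$; then Observation~\ref{obs:recap-zero-chain} (applied in dimension $d=\T+1$) forces $x\ind{t}_\T=x\ind{t}_{\T+1}=0$ for every $t\le\T$ and every $\alg\in\AlgZRFO$, so $\norm{\grad f(x\ind{t})}>\epsilon$ for $t\le\T$ and hence $\TimeEps{\alg}{f}>\T$. Because $\FclassMany{p}$ is orthogonally invariant, Proposition~\ref{prop:recap-det-zr} upgrades this to $\CompEps{\AlgDetFO\cup\AlgZRFO}{\FclassMany{p}}>\T$, provided the parameters are chosen so that $f\in\FclassMany{p}$; as in the convex case the dimension $\T+1$ is what grows inversely with $\epsilon$.

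For the general bound with $p\ge2$, I would set $r=\mu^{-1/2}$ (legal since $\mu\le1$), which collapses the first-order constraint to $2\lambda\smC{1}\le\SmGrad$, so take $\lambda=\SmGrad/(2\smC{1})$. Substituting $\sigma$ and $r$ into the $q$th-order constraint, the exponent of $\mu$ simplifies to $5(q-1)/4$, so the constraint becomes $\lambda^{q}\mu^{5(q-1)/4}\smC{q}(4\epsilon)^{-(q-1)}\le\Sm{q}$, i.e.\ $\mu\le\big[\Sm{q}(4\epsilon)^{q-1}(2\smC{1}/\SmGrad)^{q}/\smC{q}\big]^{4/(5(q-1))}$ for every $q\in\{2,\dots,p\}$. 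Take $\mu$ equal to the minimum of these quantities (the assumption $\epsilon\le(\SmGrad^{q}/\Sm{q})^{1/(q-1)}$, after absorbing the $\smC{q}$ factors into the numerical constant, guarantees $\mu\le1$ and $r\ge1$) and then choose $\T=\floor{c'\lambda\sqrt\mu\,\DeltaF\epsilon^{-2}}$; in this regime the $10\lambda\sigma^2\mu\T$ term dominates $\tfrac12\lambda\sigma^2\sqrt\mu$ (because $\mu\asymp\epsilon^{4/5}\gg\epsilon^{2}$) and $\lambda\sigma^{2}=16\epsilon^{2}/(\lambda\mu^{3/2})$, so $f(0)-\inf_x f(x)\le\DeltaF$. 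Plugging $\lambda=\SmGrad/(2\smC{1})$ and the binding value of $\mu$ into $\T\asymp\lambda\sqrt\mu\,\DeltaF\epsilon^{-2}$ reproduces $\DeltaF(\SmGrad/\smC{1})^{3/5-2/(5(q-1))}(\Sm{q}/\smC{q})^{2/(5(q-1))}\epsilon^{-8/5}$ for that $q$, hence the minimum over $q\in\{2,\dots,p\}$.

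The sharper $p=2$ bound comes from a different choice of $r$: with only first- and second-derivative constraints, nothing forces $r$ to be large, and the Hessian-Lipschitz contribution of the $\Ups$ terms ($\propto r^{3-2}\mu=r\mu$) is minimized by taking $r=1$. The first-order constraint then still gives $\lambda\asymp\SmGrad/\smC{1}$, while the Hessian-Lipschitz constraint reads $\lambda^{2}\mu^{7/4}\smC{2}(4\epsilon)^{-1}\lesssim\SmHess$, so $\mu\lesssim(\epsilon\SmHess\smC{1}^{2}/(\SmGrad^{2}\smC{2}))^{4/7}$ --- a \emph{larger} admissible $\mu$ than the generic $q=2$ value, because the exponent of $\mu$ is now $7/4$ rather than $5/4$. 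Re-running the computation $\T\asymp\lambda\sqrt\mu\,\DeltaF\epsilon^{-2}$ yields $\DeltaF(\SmGrad/\smC{1})^{3/7}(\SmHess/\smC{2})^{2/7}\epsilon^{-12/7}$.

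I expect the main difficulty to be the bookkeeping of this constrained optimization rather than any single inequality: one must commit to one assignment of $(\lambda,\mu,r,\T)$ that \emph{simultaneously} satisfies all $p$ Lipschitz constraints and the function-value constraint, make the right choice of $r$ in each of the two cases ($\mu^{-1/2}$ versus $1$), and track the constants $\smC{q}$ uniformly in $q$ so that they match the claimed growth $\smC{q}\le e^{\frac{3q}{2}\log q+Cq}$ and so that the $\epsilon$-assumption cleanly implies $\mu\le1$. The analytic content --- that $\fhardFO$ is a zero-chain, has gradient norm $\gtrsim\mu^{3/4}$ outside the chain, and has all derivatives Lipschitz with the stated constants --- is already supplied by Observation~\ref{obs:span} and Lemmas~\ref{lem:gradbound} and~\ref{lem:bounded}, so this theorem is ``just'' the scaling wrapper.
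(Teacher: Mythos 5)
Your proposal follows the paper's proof essentially step for step: the same scaled instance $f(x)=\lambda\sigma^2\fhardFO(x/\sigma)$, the same reduction through Observations~\ref{obs:span} and~\ref{obs:recap-zero-chain}, Lemmas~\ref{lem:gradbound} and~\ref{lem:bounded}, and Proposition~\ref{prop:recap-det-zr}, and the same parameter choices --- $r=\mu^{-1/2}$ for general $p$ (this is exactly the paper's $\bar{r}=(\SmN{1}/\bar{\mu})^{1/2}$ in unbarred variables) and $r=1$ for $p=2$ --- yielding the identical exponents and the same $T\asymp\lambda\sqrt{\mu}\,\DeltaF\epsilon^{-2}$.

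The one loose end is the function-value constraint when $\DeltaF$ is small relative to $\epsilon$. Your justification that $10\lambda\sigma^2\mu T$ dominates $\tfrac12\lambda\sigma^2\sqrt{\mu}$ ``because $\mu\asymp\epsilon^{4/5}\gg\epsilon^2$'' drops the prefactors involving $\DeltaF$ and the Lipschitz constants; if $\lambda\sqrt{\mu}\sigma^2>\DeltaF$ then \emph{no} nonnegative $T$ makes $f(0)-\inf_x f(x)\le\DeltaF$, and the construction produces nothing. The paper closes this case explicitly: it shows that $\lambda\sqrt{\mu}\sigma^2>\DeltaF$ forces the claimed right-hand side to be smaller than $\sqrt{\DeltaF\SmGrad}/(4\epsilon)$, so the conclusion follows from the convex lower bound of Theorem~\ref{thm:convex-lb} applied to $\QuadClass\subset\FclassMany{p}$. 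You need this (or some equivalent argument showing the claimed bound is then trivially met) to cover all positive $\DeltaF$ and $\epsilon$; everything else in your outline is the paper's argument.
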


We prove Theorem~\ref{thm:final} in Section~\ref{sec:proof-final} to come,
providing a brief overview of the argument here, and then providing some
discussion.  In the proof, we construct the hard instance $f: \R^{T+1} \to
\R$ as $f(x) = \lambda \sigma^2 \fhardFO(x/\sigma)$, where we must choose
the parameters $\lambda, \sigma > 0$ as well as $\mu, r$, and $\T$ to
guarantee that $f$ is (a) hard to optimize, \ie $\TimeEps{\alg}{f}>T$ for
every $\alg\in\AlgZRFO$, and (b) meets the smoothness and boundedness
requirements of the function class.

We begin by sketching the argument for $p=2$.  In this case, we may take
$r=1$, $\lambda \propto \SmGrad$ and $\mu
\propto \SmHess\sigma / \lambda$. This choice guarantees that $f$ has
$\SmGrad$-Lipschitz gradient and $\SmHess$-Lipschitz Hessian when $\mu \le
1$, which we later verify using the assumption $\epsilon\le
\SmGrad^2/\SmHess$. We then use Observation~\ref{obs:span}, 
Lemma~\ref{lem:gradbound} and Observation~\ref{obs:recap-zero-chain} 
to show that $\TimeEps{\alg}{f} \ge T+1$
for every $\alg \in \AlgZRFO$ whenever $\lambda \sigma\mu^{3/4}/4 \ge 
\epsilon$, and conclude that $\sigma$ may scale as
$\lambda^{1/7} \epsilon^{4/7}$ (since $\mu \propto \SmHess \sigma / 
\lambda$). By
Lemma~\ref{lem:bounded}.\ref{item:bounded-funcs} we
have
\begin{equation*}
  f(0)-\inf_x f(x) \le \lambda \sqrt{\mu}\sigma^2/2 + 10\lambda \mu \sigma^2 T,
\end{equation*}
so we can take $T\propto \DeltaF/(\lambda \mu \sigma^2)\propto
\DeltaF/(\SmHess \sigma^3)$ to guarantee $f(0)-\inf_x f(x) \le \DeltaF$,
where we assume without loss of generality that $\lambda \sqrt{\mu}\sigma^2
\le \DeltaF$ (otherwise Theorem~\ref{thm:convex-lb} dominates our
bound). Substituting the expressions for $\sigma, \mu$ and $\lambda$ into
the expression for $T$ gives the result for $p=2$.

For $p \ge 3$ we require a more careful argument, as we must simultaneously
handle all orders of smoothness. 
 To do so, we let  $\mu = \bar{\mu} \sigma^2 / \lambda$ and $r = 
 \bar{r}/\sigma$, and show how to take $\bar{r}$ and $\bar{\mu}$ 
 independently 
of $\epsilon$ (depending only on $\Sm{1}, \ldots, \Sm{p}$). This  
allows us to obtain identical $\epsilon$-dependence for all $p\ge 3$.

To better understand the theorem, we give a few additional remarks.

\paragraph{Near-achievability of the lower bounds}

In the paper~\cite{CarmonDuHiSi17}, we propose the method ``convex until 
proven guilty,'' which augments Nesterov's accelerated 
gradient method with implicit negative curvature descent. For the function
classes
$\FclassTwo{1}{2}$ and $\FclassThree{1}{2}{3}$, it
achieves rates of convergence
$\Otils{\DeltaF\SmGrad^{1/2}\SmHess^{1/4}\epsilon^{-7/4}}$ and
$\Otils{\DeltaF\SmGrad^{1/2}\Sm{3}^{1/6}\epsilon^{-5/3}}$,
respectively. These results nearly
match our lower bounds in Theorem~\ref{thm:final};
in the case of $p = 2$, the gap (in terms of $\epsilon$) is
of order $\epsilon^{-\frac{1}{28}}\log\frac{1}{\epsilon}$, while for
$p \ge 3$, the gap is of order
$\epsilon^{-\frac{1}{15}}\log\frac{1}{\epsilon}$.
See further discussion in
Sec.~\ref{sec:conclusion-implications}.

\paragraph{Choice of function class}
The focus on the more restricted function classes
$\FclassMany{p}$---rather than the classes $\Fclass{p}$ we study in
Part~I~\cite{NclbPartI}---makes our lower bounds stronger, and it
is necessary for non-trivial results, since for any $p\ge2$ and
$\Delta,\Smp>0$, the class 
$\Fclass{p}$ contains functions impossible for first-order methods. Indeed,
the class $\QuadClass$ of $\DeltaF$-bounded $\SmGrad$-smooth
convex quadratics is a subset of $\Fclass{p}$ for \emph{any} $\SmGrad <
\infty$ and $\Smp > 0$. Therefore, by Theorem~\ref{thm:convex-lb},
\begin{equation*}
	\CompEps{\AlgDetFO\cup\AlgZRFO}{\Fclass{p}} \ge 
	\sup_{\SmGrad < \infty} \CompEps{\AlgDetFO\cup\AlgZRFO}{\QuadClass}
	\ge \sup_{\SmGrad < \infty}
	\frac{\sqrt{\SmGrad\DeltaF}}{4}\epsilon^{-1} = \infty.
\end{equation*}
We thus limit our scope to functions with smooth lower order derivatives.

\paragraph{Conditions on the accuracy $\boldsymbol{\epsilon}$}
In Theorem~\ref{thm:final} we require that $\epsilon^{q - 1} \le \SmGrad^q /
\Sm{q}$ for all $q\in\{2, \ldots ,p\}$. For each $q$, we may rewrite this as
$\Sm{q}^{1/ q} \DeltaF \epsilon^{-(1+q)/q} \le \SmGrad \DeltaF \epsilon^{-2}$.
In other words, these conditions ensure that $q$th order
regularization-based methods have stronger convergence guarantees than
gradient descent~\cite{BirginGaMaSaTo17,NclbPartI}.

\paragraph{The case $\boldsymbol{p=1}$}
We state our bounds in Theorem~\ref{thm:final} for $p \ge 2$. It is possible
to use the construction~\eqref{eq:fhard-def} to prove a lower bound of
$O(\DeltaF\SmGrad\epsilon^{-2})$ on the time necessary for a deterministic
first-order algorithm to find an $\epsilon$-stationary point for the class
$\Fclass{1}$.  As Theorem~\exref{thm:fullder-final} shows this lower bound
holds for all randomized high-order algorithms, we do not pursue
this.

\paragraph{The case $\boldsymbol{p=3}$} 
We can slightly strengthen our lower bound in the case $p=3$, making it
independent of $\SmHess$ for sufficiently small $\epsilon$. To achieve this
we set $r=1$ in the definition of $\Upsilon_r$, take $\lambda\mu \propto
\Sm{3}\sigma^2$, and argue that that the resulting construction has
$O(\sigma)$-Lipschitz continuous Hessian, and $\sigma$ tends to zero as
$\epsilon \to 0$. For sufficiently small $\epsilon$, we can then replace the minimum over $q \in \{2, 3\}$ in
the first claim of Theorem~\ref{thm:final} with $\Sm{1}^{2/5}
\Sm{3}^{1/5}\epsilon^{-8/5}$.

\vspace{\baselineskip}

\noindent
The commentary on Theorem~\exref{thm:fullder-simple} in
Section~\exref{sec:fullder-deterministic-statement} is relevant also to
Theorem~\ref{thm:final}. In particular, it provides discussion of the polynomial 
scaling 
of  $\smC{q}^{1/q}$ in $q$.

\subsection{Lower bounds based on distance to optimality}

For convex optimization problems, typical convergence guarantees depend on
the distance of the initial point to the globally
optimal set $\argmin_x f(x)$; the dependence on this distance may be
polynomial for general convex optimization
problems~\cite{NemirovskiYu83,Nesterov04}, while for smooth and strongly
convex problems, the convergence guarantees depend only logarithmically on
it. In the non-convex case, we can provide lower bounds that
depend on the distance rather than the gap $\DeltaF \defeq f(x\ind{0}) -
\inf_x f(x)$.  To that end, we consider the class
\begin{equation*}
  \FclassDMany{p}
\end{equation*}
functions with $\Sm{q}$-Lipschitz $q$th derivatives (for each $q \in
[p]$) and all global minima $x^\star$ satisfying $\norm{x^\star}\le D$.  We
obtain the bound, analogously to our
results in Section~\exref{sec:fullder-distance}, by ``hiding'' a sharp global
minimum near the origin.

To state the theorem, we require an additional piece of notation. Let
$B_\epsilon(\DeltaF, \Sm{1}, \ldots, \Sm{p})$ be the lower bound
Theorem~\ref{thm:final} provides on $\CompEps{\AlgDet \cup
  \AlgZRFO}{\FclassMany{p}}$, so
\begin{equation*}
  \CompEps{\AlgDet \cup
    \AlgZRFO}{\FclassMany{p}} \ge B_\epsilon(\DeltaF, \Sm{1}, \ldots,
  \Sm{p}),
\end{equation*}
where we take $B_\epsilon = 1$ if $\epsilon > 0$ is larger than the settings
Theorem~\ref{thm:final} requires.  Then by a reduction from our lower bounds
on the complexity of $\FclassMany{p}$, we obtain the following result.

\begin{restatable}{theorem}{thmFirstorderFinalDistance}\label{thm:final-d}
  There exists a numerical constant $c < \infty$ such that the following
  lower bound holds. Let $p\ge 2$, $p \in \N$, and let $D, \SmGrad, \SmHess,
  \ldots, \Smp$, and $\epsilon$ be positive. Then
  \begin{equation*}
    \CompEps{\AlgDetFO \cup \AlgZRFO}{\FclassDMany{p}}
    \ge B_\epsilon\left(\min_{q\in[p]}
    \left\{\frac{\Sm{q}}{2\smCvar{q}}D^{q+1}\right\},
    \frac{\Sm{1}}{2}, \frac{\Sm{2}}{2}, \ldots, \frac{\Sm{p}}{2}
    \right),
  \end{equation*}
  where $\smCvar{q} \le \exp(c q \log q + c)$.
\end{restatable}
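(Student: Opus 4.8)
The plan is to derive Theorem~\ref{thm:final-d} from Theorem~\ref{thm:final} by a reduction that ``buries'' the infimum of the hard first-order zero-chain inside a sharp, narrow downward bump hidden in the ball $B(0,D)$ --- precisely the device of Section~\exref{sec:fullder-distance} adapted to first-order methods. Set $W\defeq\min_{q\in[p]}\frac{\Sm{q}}{2\smCvar{q}}D^{q+1}$ and apply Theorem~\ref{thm:final} with value gap $W$ and Lipschitz constants $\Sm{1}/2,\ldots,\Sm{p}/2$ (if $\epsilon$ is too large for the theorem to apply with these parameters then $B_\epsilon=1$ and there is nothing to prove). This produces a first-order zero-chain $f_0:\R^{\T+1}\to\R$ with $\inf f_0=0$, $f_0(0)\le W$, $\deriv{q}f_0$ that is $(\Sm{q}/2)$-Lipschitz for $q\le p$, $\norm{\grad f_0(x)}>\epsilon$ whenever $x_{\T}=x_{\T+1}=0$, and $\TimeEps{\alg}{f_0}\ge\T+1\ge B_\epsilon(W,\Sm{1}/2,\ldots,\Sm{p}/2)$ for every $\alg\in\AlgZRFO$, where $B_\epsilon$ denotes the lower bound of Theorem~\ref{thm:final}. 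I would regard $f_0$ as a function on $\R^{\T+2}$ that ignores coordinate $\T+2$.

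Now fix a $\mc{C}^\infty$ cutoff $\chi:[0,\infty)\to[0,1]$ equal to $1$ on $[0,\tfrac12]$ and $0$ on $[1,\infty)$ whose composition with the Euclidean norm has $q$th derivative with Lipschitz constant at most $\smCvar{q}$ (such $\chi$ exists with $\smCvar{q}\le\exp(cq\log q+c)$ by the same growth estimates used for $\Ups$ in Lemma~\ref{lem:props}, after absorbing harmless $2^{O(q)}$ factors into $\smCvar{q}$); let $\rho\defeq\max_{q\in[p]}\big(4W\smCvar{q}/\Sm{q}\big)^{1/(q+1)}$, so that $\rho<D/2$ by the definition of $W$; set $x^\star\defeq(D-\rho)e\ind{\T+2}$; and put
\[
  g(x)\defeq f_0(x)-2W\,\chi\!\big(\norm{x-x^\star}/\rho\big).
\]
I would first check $g\in\FclassDMany{p}$: it is $\mc{C}^\infty$ (the bump is smooth at $x^\star$ since $\chi$ is flat near $0$); its $q$th derivative is Lipschitz with constant at most $\Sm{q}/2+2W\smCvar{q}/\rho^{q+1}\le\Sm{q}$; and the bump lowers $g$ only on $B(x^\star,\rho)$, where $g(x^\star)=f_0(0)-2W\le-W<0=\inf_{x\notin B(x^\star,\rho)}g(x)$, so every global minimizer of $g$ lies in $B(x^\star,\rho)$, hence at distance $<\norm{x^\star}+\rho=D$ from the origin.

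The crux is that $g$ is still hard. Because $f_0$ ignores coordinate $\T+2$ and the bump's gradient, where nonzero, is a scalar multiple of $x-x^\star$, a simultaneous induction on $t$ shows that for every $\alg\in\AlgZRFO$ the iterates $\{x\ind{t}\}=\alg[g]$ satisfy $\support{x\ind{t}}\subseteq\{1,\ldots,t-1\}$ --- in particular $x\ind{t}_{\T+2}=0$, so $\norm{x\ind{t}-x^\star}\ge D-\rho>\rho$, the iterate never enters the bump's support, and $\grad g(x\ind{t})=\grad f_0(x\ind{t})$. Consequently $\{x\ind{t}\}$ is also zero-respecting with respect to $f_0$, Observation~\ref{obs:recap-zero-chain} gives $x\ind{t}_{\T}=x\ind{t}_{\T+1}=0$, and thus $\norm{\grad g(x\ind{t})}=\norm{\grad f_0(x\ind{t})}>\epsilon$ for all $t\le\T$. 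Hence $\TimeEps{\alg}{g}\ge\T+1$ for all zero-respecting $\alg$; since $\FclassDMany{p}$ is orthogonally invariant, Proposition~\ref{prop:recap-det-zr} extends this to $\AlgDetFO\cup\AlgZRFO$, and $\CompEps{\AlgDetFO\cup\AlgZRFO}{\FclassDMany{p}}\ge\T+1\ge B_\epsilon(W,\Sm{1}/2,\ldots,\Sm{p}/2)$, which is the claim.

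The step I expect to be most delicate is the joint parameter budgeting, not any single inequality. The bump must be deep enough to dip below $\inf f_0$ (which is why Theorem~\ref{thm:final} can only be fed the small gap $W$), yet a bump of depth $\Theta(W)$ confined to a radius-$D$ ball necessarily has $q$th derivatives of size $\Theta(W/D^{q+1})$ --- a Taylor estimate along a diameter --- which forces $W\lesssim\Sm{q}D^{q+1}$ for every $q$; this is exactly what produces the $\min_q$ of the quantities $\Sm{q}D^{q+1}/\smCvar{q}$ and shows the reduction is essentially lossless. The accompanying geometric choice --- placing the bump along a fresh coordinate near the sphere $\norm{x}=D$ rather than literally near the origin --- is what lets $\rho$ be a constant fraction of $D$ while keeping the bump disjoint from the affine subspace inhabited by zero-respecting iterates; hiding the minimum at a point of small norm would shrink $\rho$ in proportion to $\norm{x^\star}$ and collapse the bound, so although it makes ``near the origin'' a slight misnomer, this placement is essential.
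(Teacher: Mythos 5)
Your proposal is correct and follows essentially the same route as the paper's proof: hide a deep, narrow, smooth bump along a fresh coordinate $e\ind{\T+2}$ near the sphere of radius $D$, budget its depth as $\min_{q}\Sm{q}D^{q+1}/(2\smCvar{q})$ so that its $q$th derivatives cost at most $\Sm{q}/2$, and observe that zero-respecting iterates (which keep coordinate $\T+2$ at zero) never enter its support, so the hardness of the underlying instance from Theorem~\ref{thm:final} is preserved while the global minimizer is pulled inside the ball of radius $D$. One quantitative slip: with $W=\min_q\frac{\Sm{q}}{2\smCvar{q}}D^{q+1}$, your radius $\rho=\max_q\left(4W\smCvar{q}/\Sm{q}\right)^{1/(q+1)}$ evaluates to $2^{1/(q^*+1)}D>D$ at the minimizing index $q^*$, so the claim $\rho<D/2$ is false as written; this is repaired by enlarging $\smCvar{q}$ by a factor $2^{O(q)}$ (equivalently shrinking $W$), which stays within the $\exp(cq\log q+c)$ envelope and which you explicitly license.
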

\noindent
We prove Theorem~\ref{thm:final-d} in Appendix~\ref{sec:D-lb-proof}.
 Theorem~\ref{thm:final-d} shows that
the lower bounds of Theorem~\ref{thm:final} apply almost identically
(to constant factors), except that we replace the function
gap $\DeltaF$ in the lower bound with the quantity
$\min_{q \in [p]} L_q D^{q + 1}$. As the dependence of the
lower bound on $\epsilon$ does not change,  distance-based
assumptions seem unlikely to help in the design of
efficient optimization algorithms for non-convex functions.

\subsection{Proof of Theorem~\ref{thm:final}}
\label{sec:proof-final}

We have five parameters with which to scale our hard function; the
function $\fhardFO$ requires definition of the dimension $\T \in \N$,
multiplier $\mu \le 1$ on the $\Ups$ terms, and scalar $r \ge 1$ that 
trades between higher order ($r = \infty$) smoothness and lower order ($r =
1$) smoothness of $\Ups$. We additionally scale the function with
$\lambda > 0$ and a perspective term $\sigma > 0$,
defining
\begin{equation}
  \label{eq:scaled-hard-fo}
  f(x) \defeq \lambda\sigma^{2}\fhardFO\left(x/\sigma\right).
\end{equation}
We must choose
these parameters to guarantee the membership
\begin{equation*}
  f \in \FclassMany{p}.
\end{equation*}
This containment requires both bounded function values and derivatives,
for which we can provide sufficient conditions.  Recall the definition
$\smC{p} \le e^{\frac{3}{2}p \log p + cp}$ from Lemma~\ref{lem:bounded}
of the smoothness constant of $\fhardFO$. Then by
Lemma~\ref{lem:bounded}.\ref{item:bounded-derivs},
to guarantee that $f$ has $\Sm q$-Lipschitz $q$th order derivatives for
every $q\in[p]$ it suffices to choose $\lambda, r, \sigma$, and $\mu$
such that
\begin{gather}
  (\indic{q=1} + r^{3-q} \sigma^{1-q}\mu) \smC{q} \lambda \le \Sm{q}
  \mbox{ for every }q\in[p]. 
  \label{eq:lb-lq} 
\end{gather}
For the bounded values constraint $f(0) - \inf_x f(x) \le \DeltaF$, by 
Lemma~\ref{lem:bounded}.\ref{item:bounded-funcs} it suffices to take
\begin{equation}
  T = \floor{\frac{\DeltaF-\lambda\sqrt{\mu}\sigma^2/2}{10\lambda\mu\sigma^2}}
  	\label{eq:lb-T}
\end{equation}
Thus, so long as we choose the constants
$\mu, \sigma, \lambda, r$ to satisfy inequality~\eqref{eq:lb-lq},
the preceding choice of $T$ guarantees $f \in \FclassMany{p}$.

With this membership guaranteed, we consider the choices for $\lambda,
\mu$, and $\sigma$ such that after $\T$ %
iterations of a zero
respecting first-order method, we have $\norms{\nabla f(x)} \ge \epsilon$.
Indeed, Observations~\ref{obs:span} and~\ref{obs:recap-zero-chain} imply 
that if $x\ind{1} = 0,
x\ind{2}, \ldots$ are the sequence of iterates produced by applying any
zero-respecting (first-order) method to $f$, then $x\ind t_{\T}=x\ind
t_{\T+1}=0$ for all $t\le\T$.  Lemma~\ref{lem:gradbound} implies that
$\norms{\grad\fhardFO\left(x\ind{t} / \sigma\right)}> \mu^{3/4} / 4$ for any
such iterate. Therefore, if we choose
$\lambda > 0, \mu \le 1$, and $\sigma > 0$ such that
\begin{equation}
  \label{eq:lb-eps}
  \ \lambda\mu^{3/4}\sigma \ge 4 \epsilon,
\end{equation}
then $\norms{\grad f\left(x\ind t\right)} =\lambda\sigma\norms{
  \grad\fhard(x\ind t/\sigma)} >\lambda\mu^{3/4}\sigma/4\ge\epsilon$ for all 
  $t\le\T$.
We thus obtain the guarantee 
\begin{equation}
  \TimeEps{\AlgZRFO}{\FclassMany{p}}
  \ge \inf_{A\in\AlgZRFO}\TimeEps{A}{f}
  \ge T+1 \ge \frac{\DeltaF-\lambda\sqrt{\mu}\sigma^2/2}{10\lambda\mu\sigma^2}.
  \label{eq:lb-genlb}
\end{equation}
The same bound for the class $\AlgDetFO$ then follows from Proposition~\ref{prop:recap-det-zr}. 
Our strategy is now the obvious one: we select $\lambda > 0, 0 < \mu \le
1, r \ge 1$, and $\sigma > 0$ to satisfy the function membership
constraints~\eqref{eq:lb-lq} and the large gradient
guarantee~\eqref{eq:lb-eps}.  Substituting our choices into the
boud~\eqref{eq:lb-genlb} will then yield the lower bound in the
theorem. We begin with the general case $p\ge2$ and later provide a
tighter construction for $p=2$.

\paragraph{General smoothness orders}
To simplify the derivation, we define, for any $q\in[p]$
\begin{equation}
  \SmN q \defeq \Sm{q} / \smCN{q}
  ~~\mbox{where}~~
  \smCN{q} \defeq \begin{cases}
    2\smC{1} & q=1 \\
    \max\left\{ \smC q,4^{q-1}\left(2 \smC 1\right)^{q}\right\} & q>1,
  \end{cases}
  \label{eq:lb-maxlp}
\end{equation}
where we note that $\smCN{q} \le e^{\frac{3}{2} q \log q + c q}$ for some
numerical constant $c < \infty$, as $\smC{q} \le e^{\frac{3}{2} q \log q +
  cq}$ for a (possibly different) numerical constant.  In order to further
simplify our calculations, we then define
\begin{equation}
  \lambda = \SmN{1}, %
  ~
  \bar{\mu} \defeq \frac{\lambda \mu}{\sigma^2}
  ~\mbox{and}~
  \bar{r} \defeq \sigma r.
  \label{eq:lb-mubar}
\end{equation}
Substituting
these definitions into the constraints~\eqref{eq:lb-lq}, we see
that our choice of $\smCN{1} = 2 \smC{1}$ implies that
the constraint~\eqref{eq:lb-lq} holds whenever
\begin{equation}
  \label{eq:hit-the-bar}
  \bar{r}^{3-q}\bar{\mu} \le \SmN{q}
  \mbox{ for all } q\in[p].
\end{equation}
We choose $\bar{r}$ and $\bar{\mu}$ to guarantee that $f$ is appropriately
smooth; in the sequel, we will choose $\sigma$ and $\lambda$ so that the
gradient bound condition~\eqref{eq:lb-eps} holds.  In this
sense, we may choose $\bar{r}$ and $\bar{\mu}$ without consideration of
$\epsilon$.  Taking $\bar{r} = ({\SmN{1}/\bar{\mu}})^{1/2}$ guarantees the
inequality~\eqref{eq:hit-the-bar} holds for $q=1$. Substituting this
choice into the identical inequality for $q \in \{2, \ldots, p\}$
shows that we must have $\bar{\mu}^{(q-1)/2} \le
\SmN{q}\SmN{1}^{(q-3)/2}$ for each such $q$. Thus, the choice
\begin{equation*}
  \bar{\mu} = \SmN{1}\min_{q\in\{2,\ldots,p\}} \left( \SmN{q}/\SmN{1}\right)^{2/(q-1)}
\end{equation*}
satisfies inequality~\eqref{eq:hit-the-bar}, and consequently, the
smoothness condition~\eqref{eq:lb-lq} as well. We may therefore
write $\bar{\mu}$ and $\bar{r}$ as
\begin{equation*}
  \bar{\mu} = \SmN{1} \left(\SmN{\qstar}/\SmN{1}\right)^{\frac{2}{\qstar-1}}
  \mbox{ and }
  \bar{r} = \left(\SmN{1}/\SmN{\qstar}\right)^{\frac{1}{\qstar-1}}
  \mbox{, where }
  \qstar \defeq \argmin_{q\in\{2,\ldots,p\}} \left( {\SmN{q}}/{\SmN{1}}\right)^{\frac{1}{q-1}}.	
\end{equation*}

It remains to choose $\lambda, \sigma$, depending on $\epsilon$, to
guarantee our gradient lower bound condition~~\eqref{eq:lb-eps}
holds, \ie
$4 \epsilon \le \lambda\mu^{3/4}\sigma= 
\lambda^{1/4}\bar{\mu}^{3/4}\sigma^{5/2}$.
We thus set
\[
\sigma = 
\left[4\SmN{1}^{-1}\left( \SmN{\qstar}/\SmN{1} \right)^{-\frac{3}{2\left(\qstar-1\right)}}\epsilon\right]^{2/5}.
\]
We can now substitute back
into our definitions $r=\bar{r}/\sigma$ and $\mu=\bar{\mu}\sigma^{2}/\lambda$
in Eq.~\eqref{eq:lb-mubar}
and verify that $r\ge1$ and $\mu\le1.$ For $r$, we have
\begin{equation*}
  r = \frac{\bar{r}}{\sigma} = 
  \left( \frac{\SmN{1}^{{\qstar}/{(\qstar-1)}}}{
    4\SmN{\qstar}^{{1}/{(\qstar-1)}}\epsilon}\right)^{2/5}
  = \left(
  \frac{\smCN{\qstar}}{
    4^{\qstar-1}\smCN{1}^{\qstar}}\right)^{\frac{2}{5(\qstar-1)}}
  \left(
  \frac{\Sm{1}^{{\qstar}/{(\qstar-1)}}}{
    \Sm{\qstar}^{{1}/{(\qstar-1)}}\epsilon}\right)^{2/5}
  \ge 1,
\end{equation*}
where the last transition uses $\smCN{\qstar}\ge
4^{\qstar-1}\smCN{1}^{\qstar}$ by the
definition~\eqref{eq:lb-maxlp}, and we used the assumption in
the theorem statement that $\epsilon\le\Sm
1^{\qstar/\left(\qstar-1\right)}/\Sm \qstar^{1/\left(\qstar-1\right)}$.
Similarly, our choice of $\mu$ satisfies
\begin{equation*}
  \mu=\frac{\bar{\mu}\sigma^{2}}{\lambda}=
  \left( \frac{4^{\qstar-1}\smCN{1}^{\qstar}}{
    \smCN{\qstar}}\right)^{\frac{4}{5(\qstar-1)}}
  \left( \frac{\Sm{\qstar}^{{1}/{(\qstar-1)}}\epsilon}{
    \Sm{1}^{{\qstar}/{(\qstar-1)}}}
  \right)^{4/5}
  \le 1.
\end{equation*}

We now consider two cases; $\lambda\sqrt{\mu}\sigma^2 \le \Delta$ and $\lambda\sqrt{\mu}\sigma^2 > \DeltaF$. In the first case (which holds for sufficiently small $\epsilon$), we substitute our choices of $\sigma,\lambda$ and $\mu$ into the time
lower bound~\eqref{eq:lb-genlb},
\begin{align*}
  T + 1 \ge \frac{\DeltaF-\lambda\sqrt{\mu}\sigma^{2}/2}{10\lambda\mu\sigma^{2}}
  \stackrel{\left(i\right)}{\ge} \frac{\DeltaF}{20\lambda\mu\sigma^{2}}
  & = \frac{\DeltaF}{20\bar{\mu}\sigma^{4}} 
  =
  \frac{\DeltaF\SmN{1}^\frac{3}{5}
    \left[\left(\SmN{\qstar}/\SmN{1}\right)^{\frac{1}{\qstar-1}}
      \right]^{\frac{2}{5}}
  }{20\cdot 4^{8/5}\cdot\epsilon^{8/5}} \\
  & =
  \frac{\DeltaF}{20 \cdot 4^{8/5}} \cdot
  \min_{q\ \in \{2, \ldots, p\} }
  \left\{
  \left(\SmGrad / \smCN{1}\right)^{\frac{3}{5}-\frac{2}{5(q-1)}}
  \left(\Sm{q} / \smCN{q}\right)^{\frac{2}{5(q-1)}}
  \right\} \epsilon^{-8/5},
\end{align*}
which is the desired bound, where in step~$(i)$ we made use of  $\lambda\sqrt{\mu}\sigma^2 \le \DeltaF$. When $\lambda\sqrt{\mu}\sigma^2 > \DeltaF$, we show that the above bound is in fact smaller than the convex lower bound in Theorem~\ref{thm:convex-lb}.  Indeed, substituting in our choices of $\lambda, \mu$ and $\sigma$, we see that $\lambda\sqrt{\mu}\sigma^2 > \DeltaF$ implies
\begin{equation*}
\Delta < 
 \SmN{1}^{3/2}  \left(\SmN{\qstar}/\SmN{1}\right)^{\frac{1}{\qstar-1}} \left[4\SmN{1}^{-1}\left( \SmN{\qstar}/\SmN{1} \right)^{-\frac{3}{2\left(\qstar-1\right)}}\epsilon\right]^{6/5}
= 
4^{\frac{6}{5}} \SmN{1}^{-\frac{1}{5}} \left(\SmN{\qstar}/\SmN{1}\right)^{-\frac{4}{5(\qstar-1)}}\epsilon^{\frac{6}{5}}.
\end{equation*}
Taking a square root and substituting to our lower bound gives
\begin{equation*}
 \frac{\DeltaF\SmN{1}^\frac{3}{5}
	\left[\left(\SmN{\qstar}/\SmN{1}\right)^{\frac{1}{\qstar-1}}
	\right]^{\frac{2}{5}}
}{20\cdot 4^{8/5}\cdot\epsilon^{8/5}} <
({80 \cdot \smCN{1}^{1/2}})^{-1} \frac{\sqrt{\DeltaF \SmGrad}}{\epsilon},
\end{equation*}
and therefore (recalling that $\QuadClass\subset\FclassMany{p}$ and that 
$\smCN{1} \ge 8$), by Thm.~\ref{thm:convex-lb} we have 
\begin{equation*}
	\CompEps{\AlgZRFO}{\FclassMany{p}} \ge \CompEps{\AlgZRFO}{\QuadClass} \ge 
 \frac{\sqrt{\DeltaF \SmGrad}}{4\epsilon} \ge
    \frac{\DeltaF\SmN{1}^\frac{3}{5}
  	\left[\left(\SmN{\qstar}/\SmN{1}\right)^{\frac{1}{\qstar-1}}
  	\right]^{\frac{2}{5}}
  }{20\cdot 4^{8/5}\cdot\epsilon^{8/5}},
\end{equation*}
completing the proof in the general case.

\paragraph{Functions with Lipschitz Hessian}
For $p=2$, we keep the definitions~\eqref{eq:lb-maxlp} but
replace the particular rescaling choices~\eqref{eq:lb-mubar} with
\begin{equation*}
  \lambda = \SmN{1} %
  ~,~
  \mu = \frac{\SmN{2}\sigma}{\lambda}
  ~~\mbox{and}~~
  r = 1.
\end{equation*}
Using $\mu\le1$, the above parameter setting satisfies
inequality~\eqref{eq:lb-lq};  $f$ has $\Sm{q}$-Lipschitz
$q$th-order derivatives for $q=1,2$.  To satisfy the gradient lower
bound~\eqref{eq:lb-eps}, \ie
$4\epsilon\le\lambda\mu^{3/4}\sigma=\SmN{1}^{1/4}\SmN{2}^{3/4}\sigma^{7/4}$,
we set
\begin{equation*}
  \sigma=\left[4\SmN{1}^{-1/4}\SmN{2}^{-3/4}\epsilon\right]^{4/7}.
\end{equation*}
We can substitute into the definition $\mu = \frac{\SmN{2} \sigma}{\lambda}$
to verify that $\mu\le1$:
\begin{align*}
  \mu & =\frac{\SmN{2}\sigma}{\lambda}=\left(\frac{4^{2-1}\left(\smCN 1\right)^{2}}{\smC 2}\right)^{4/7}\left(\frac{\Sm 2\epsilon}{\Sm 1^{2}}\right)^{4/7}\le1
\end{align*}
by the definition~\eqref{eq:lb-maxlp} of $\smCN{p}$ and the
assumption $\epsilon\le\Sm{1}^{2} / \Sm{2}$. As in the general case, we first assume $\lambda\sqrt{\mu}\sigma^2 \le \DeltaF$, where substituting
into~\eqref{eq:lb-genlb} yields the desired lower bound
\begin{align*}
  T+1\ge \frac{\DeltaF-\lambda\sqrt{\mu}\sigma^{2}/2}{10\lambda\mu\sigma^{2}}
  \ge %
  \frac{\DeltaF}{20\lambda\mu\sigma^{2}} = \frac{\DeltaF}{20\SmN{2}\sigma^{3}} 
  & =
  \frac{\DeltaF\SmN{1}^{\frac{3}{7}}\SmN{2}^{\frac{2}{7}}}{
    20\cdot 4^{12/7}\cdot\epsilon^{12/7}}.
\end{align*}
If $\lambda\sqrt{\mu}\sigma^2 \le \DeltaF$ does not hold, we have
\begin{equation*}
\Delta < 
 \SmN{1}^{1/2} \SmN{2}^{1/2} \left[4\SmN{1}^{-1/4}\SmN{2}^{-3/4}\epsilon\right]^{\frac{4}{7}\cdot \frac{5}{2}}
<
4^{\frac{10}{7}}\SmN{1}^{-\frac{1}{7}} \SmN{2}^{-\frac{2}{7}}\epsilon^{\frac{10}{7}}.
\end{equation*}
Taking a square root and substituting to our lower bound gives
\begin{equation*}
\frac{\DeltaF\SmN{1}^{\frac{3}{7}}\SmN{2}^{\frac{2}{7}}}{
	20\cdot 4^{12/7}\cdot\epsilon^{12/7}} <
({20 \cdot \smCN{1}^{1/2}})^{-1} \frac{\sqrt{\DeltaF \SmGrad}}{\epsilon} < 	\CompEps{\AlgZRFO}{\QuadClass} \le \CompEps{\AlgZRFO}{\FclassTwo{1}{2}},
\end{equation*}
due to Theorem~\ref{thm:convex-lb}, establishing the case $p=2$.

\section{The challenge of strengthening Theorem~\ref{thm:final}}
\label{sec:discussion}

The lower bounds in Theorem~\ref{thm:final} leave two avenues for
improvement. The first is tightening our $\epsilon^{-12/7}$ and
$\epsilon^{-8/5}$ lower bounds to match the known upper bounds of
$\epsilon^{-7/4}$ and $\epsilon^{-5/3}$, for $p=2$ and $p=3$,
respectively. The second improvement is to extend our lower bounds to
randomized algorithms, as we did for the case of full derivative information
in Section~\exref{sec:fullder-random}.
We discuss each of these in turn.

\subsection{Tightness of lower bound construction}
\label{sec:disc-tightness}

\newcommand{\fhardTFO}{\wt{f}_{T, \alpha, \mu}}
\newcommand{\linkfun}{\Lambda}
\newcommand{\bupsilon}{\wt{\Upsilon}}

The core of our first-order lower bounds is
Lemma~\ref{lem:gradbound}, which establishes a lower bound of the form
$\norms{\fhardFO(x)} > \mu^{3/4}/4$ for vectors $x$ such that
$x_\T=x_{\T+1}=0$ (\ie any point that a first-order zero-respecting method
can produce after $\T$ iterations), where $\fhardFO$ is our unscaled hard
instance (see definition~\eqref{eq:fhard-def}).  Here we consider a slightly 
more general form,
\begin{equation}\label{eq:gen-construction}
  \fhardTFO(x) \defeq  \alpha \cdot \linkfun (x_1 -1) 
  +\sum_{i=1}^{\T} \linkfun (x_{i+1} - x_i)
  + \mu \sum_{i=1}^{\T} \bupsilon(x_i),
\end{equation}
where $\linkfun : \R \to \R$ and $\bupsilon : \R \to \R$ are $\mc{C}^\infty$, 
and we assume $\T\in\N$, $\alpha > 0$,
and $0 < \mu\le 1$.
The chain $\fhardFO$
corresponds to the special case $\alpha = \sqrt{\mu}$, $\linkfun(x)=x^2/2$
and $\bupsilon(x) = \Ups(x)$ (defined in Eq.~\eqref{eq:ups-def}).

We
claim that if we can show that the norm of $\nabla \fhardTFO(x)$ is 
\emph{not too large} for some $x \in \R^{\T + 1}$ with $x_\T = x_{\T + 1}=0$, 
then 
our
lower bound cannot be improved. More concretely, suppose that for every
$\T$, $\mu\le1$ and $\alpha\le 1$ we could find ${x} \in \R^{\T+1}$ such
that $x_\T = x_{\T+1} =0$, and $\norms{\grad \fhardFO(x)} \le C\mu^{3/4}$
for some constant $C$ independent of $\T,r$ and $\mu$, matching
Lemma~\ref{lem:gradbound} to a constant. We can then trace the scaling
arguments in the proof of Theorem~\ref{thm:final} ``in reverse,'' showing
that any choice of $ \T,\lambda, \mu, \sigma$ and $r$ for which the function
$f(x) = \lambda \sigma^2 \fhardTFO(x/\sigma)$ satisfies both (i)
$f\in\FclassMany{p}$ and (ii) $\norm{\grad f(x)} > \epsilon$ for all $x$
such that $x_\T = x_{\T+1} = 0$, we have $T \le c \cdot \epsilon^{-8/5}$ for 
$p\ge
3$ and $T \le c \cdot \epsilon^{-12/7}$ for $p=2$, where $c$ is some
problem-dependent constant independent of $\epsilon$.

The next lemma, whose proof we provide in
Appendix~\ref{sec:proof-disc-tight}, shows such gradient norm upper bound 
 for constructions of the form~\eqref{eq:gen-construction}.
\begin{restatable}{lemma}{lemDistTight}\label{lem:disc-tight}
  Let $\T \in \N$, $0 < \alpha\le1$, $\mu \in [\T^{-2}, 1]$ and $\fhardTFO$ be defined as in~\eqref{eq:gen-construction}, with $\linkfun$ and $\bupsilon$ satisfying
  \begin{equation*}
    \linkfun'(0) = \bupsilon'(0) = 0 
    ~~\mbox{and}~~
    \linkfun'\mbox{ is 1-Lipschitz continuous}
    ~~\mbox{and}~~
    \max_{z\in [0,1]} | \bupsilon'(z) | \le G,
  \end{equation*}
  for $G>0$ independent of $\T, \alpha$ and $\mu$. Then there exists $x \in
  \R^{\T+1}$ such that $x_\T = x_{\T+1} = 0$ and
  \begin{equation*}
    \norm{\grad \fhardTFO(x)} < C\mu^{3/4},
  \end{equation*} 
  where $C \le 27 + \sqrt{3}G$.
\end{restatable}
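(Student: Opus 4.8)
The plan is to exhibit an explicit near-stationary $x$ for $\fhardTFO$ by letting its coordinates trace a \emph{smooth} monotone ``ramp'' from $1$ down to $0$ over $m \asymp \mu^{-1/2}$ entries and then holding at $0$. Fix once and for all a $\mc{C}^{2}$ function $p : [0,1] \to [0,1]$ that is non-increasing with $p(0) = 1$, $p(1) = 0$ and $p'(0) = p'(1) = 0$ (for concreteness $p(t) = 1 - 3t^{2} + 2t^{3}$), set $m = \ceil{\mu^{-1/2}}$, and define $x_{i} = p(i/m)$ for $1 \le i \le m$ and $x_{i} = 0$ for $m < i \le \T+1$, writing $x_{0} \defeq 1$ for the constant appearing in $\linkfun(x_{1}-1)$. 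Because $\mu \ge \T^{-2}$ we have $m \le \T$, so indeed $x_{\T} = x_{\T+1} = 0$ (this is the only place the hypothesis $\mu \ge \T^{-2}$ is used: it is exactly what lets the ramp fit inside $\T$ coordinates). Since $p$ maps $[0,1]$ into $[0,1]$, every $x_{i} \in [0,1]$, so $\max_{[0,1]}|\bupsilon'| \le G$ bounds $|\bupsilon'(x_{i})|$ for all $i$.

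Next I would bound each coordinate of $\grad_{i}\fhardTFO(x) = \linkfun'(x_{i}-x_{i-1}) - \linkfun'(x_{i+1}-x_{i}) + \mu\bupsilon'(x_{i})$ (with the first link carrying weight $\alpha$ when $i=1$). For $1 \le i \le m-1$, using that $\linkfun'$ is $1$-Lipschitz (with $\linkfun'(0)=0$), the chain part is controlled by the discrete second difference $|2x_{i} - x_{i-1} - x_{i+1}| = |p(\tfrac{i-1}{m}) - 2p(\tfrac{i}{m}) + p(\tfrac{i+1}{m})| \le \norm{p''}_{\infty}/m^{2}$, giving $|\grad_{i}\fhardTFO(x)| \le \norm{p''}_{\infty}/m^{2} + \mu G$. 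The two ramp endpoints need separate handling: at $i=1$ the weight mismatch leaves an extra term $(\alpha-1)\linkfun'(x_{1}-1)$, bounded by $|x_{1}-1| = |p(1/m) - p(0)| \le \tfrac12\norm{p''}_{\infty}/m^{2}$ because $p'(0)=0$; at $i=m$ the only surviving term is $\linkfun'(x_{m}-x_{m-1}) = \linkfun'(-p(1-1/m))$, bounded by $p(1-1/m) \le \tfrac12\norm{p''}_{\infty}/m^{2}$ because $p'(1)=0$. Finally, for $i > m$ all of $x_{i-1},x_{i},x_{i+1}$ vanish, so $\grad_{i}\fhardTFO(x) = \linkfun'(0) - \linkfun'(0) + \mu\bupsilon'(0) = 0$.

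To assemble the norm I would write $\grad\fhardTFO(x) = v_{\mathrm{chain}} + v_{\bupsilon}$ with $(v_{\bupsilon})_{i} = \mu\bupsilon'(x_{i})$; since $\bupsilon'(0)=0$ and $x_i = 0$ for $i\ge m$, this vector is supported on $\{1,\dots,m-1\}$ with entries at most $\mu G$, so $\norm{v_{\bupsilon}} \le \sqrt{m}\,\mu G \le \sqrt{3}\,\mu^{3/4}G$ using $m \le \mu^{-1/2}+1 \le 3\mu^{-1/2}$. Meanwhile $v_{\mathrm{chain}}$ is supported on $\{1,\dots,m\}$ with every entry $O(1/m^{2})$, so $\norm{v_{\mathrm{chain}}} \le O(\sqrt{m}/m^{2}) = O(m^{-3/2}) \le O(\mu^{3/4})$ using $m \ge \mu^{-1/2}$; the triangle inequality then yields $\norm{\grad\fhardTFO(x)} \le (O(1) + \sqrt{3}G)\,\mu^{3/4}$. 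The only real work is to track the $O(1)$ factor in the chain term (coming from $\norm{p''}_{\infty}$ and the $\sqrt{m}$) carefully enough to land at $C \le 27 + \sqrt{3}G$, plus checking the small-$m$ edge cases; I expect this constant-chasing, rather than any conceptual difficulty, to be the main obstacle. The one genuinely necessary idea is that the ramp must be smooth with \emph{vanishing endpoint slopes}: a piecewise-linear ramp already fails, because its single kink and its $\alpha$-mismatch at $i=1$ produce gradient entries of size $\Theta(1/m)$, contributing $\Theta(\mu)$ to $\norm{\grad\fhardTFO(x)}^{2}$, which for $m \asymp \mu^{-1/2}$ swamps the target $O(\mu^{3/2})$.
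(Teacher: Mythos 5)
Your proposal is correct and follows essentially the same route as the paper's proof: both construct an explicit ramp from $1$ to $0$ over $m\asymp \mu^{-1/2}$ coordinates whose discrete second differences are $O(1/m^2)$ (the paper via an explicit piecewise-constant second-difference sequence $\delta_n$, you via sampling a $\mc{C}^2$ cubic with vanishing endpoint slopes), then bound each gradient coordinate by that second difference plus $\mu G$ and take the $\ell_2$ norm over the $O(\mu^{-1/2})$-sized support. The only differences are bookkeeping --- your use of $p'(0)=p'(1)=0$ replaces the paper's trick of setting $x_1=1$ exactly to neutralize the $\alpha$-weighted link, and your shorter ramp ($m$ rather than $2m+1$ coordinates) removes the need for the paper's separate small-$\T$ edge case --- and the constants land comfortably within $27+\sqrt{3}G$.
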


Let us discuss the lemma.  The condition that
$\linkfun'(0) = \bupsilon'(0) = 0$ is essential for
any zero-chain-based proof, as otherwise $\fhardTFO$ is not a
first-order zero-chain (if $\alpha=1$ then we may 
have $\linkfun'(0)\neq 0$; Lemma~\ref{lem:disc-tight} holds in this case as
well).
The requirement that the multiplier $\mu \ge 1/T^2$ on $\bupsilon$
is also benign, as in
our proofs require $\mu \gtrsim 1/\sqrt{T} \gg 1/T^2$ (further decreasing 
$\mu$ weakens the lower bound as it makes $\fhardTFO$ too smooth; 
inspection of the scaling argument in the proof of Theorem~\ref{thm:final}  
shows this rigorously).
The function $\linkfun$ must have Lipschitz derivatives with parameter
independent of $\mu,\T$, as otherwise $\fhardTFO$ cannot be scaled to meet
the smoothness requirements. Finally, the requirement $ \max_{z\in [0,1]} |
\bupsilon'(z) | < \infty$ holds for every $\mc{C}^\infty$ function.
Moreover, a calculation shows it holds with $G=\sqrt{10\smC{3}}$ 
independent of $r$ for
 every $\Ups$ that satisfies Lemma~\ref{lem:props}.

Summarizing, tightening our lower bounds seems to require a construction
that is not of the form~\eqref{eq:gen-construction}. This does not eliminate
more general (non-convex) interactions, \eg of the form $\linkfun(x_i, x_{i + 
1})$ rather than
$\linkfun(x_{i+1} - x_i)$. The proof technique of Lemma~\ref{lem:gradbound} 
should provide useful ``sanity checks'' when considering alternative 
constructions.

\subsection{A bound for randomized algorithms}\label{sec:disc-rand}

In Section~\exref{sec:fullder-random} we extend our lower bound for
$\TimeEps{\AlgDet\cup\AlgZR}{\Fclass{p}}$ to the broader class of randomized
algorithms $\AlgRand$ with access to all derivatives at query point  
$x$. We do
this by making our hard function insensitive: the
individual ``linking'' terms $\compactfunc(x_i)\gausscdf(x_{i+1})$ are 
identically zero for $x_i$ near 0. A natural question is whether the
same methodology (originally proposed in~\cite{WoodworthSr16}) can extend
Theorem~\ref{thm:final} to the class of randomized first-order algorithms,
$\AlgRandFO$. Direct application of that technique cannot work in our case, 
for a simple reason: it applies to randomized algorithms of \emph{any}
order. In other words, if we modify our hard instance
construction~\eqref{eq:fhard-def} to be a robust zero-chain
(Definition~\exref{def:robust-zero-chain}), any lower bounds it implies hold
for all algorithms in $\AlgRand$, where $\epsilon^{-(p+1)/p}$ rates
are achievable, so we could not provide sharper lower bounds than
Theorem~\exref{thm:fullder-final}.

 Nevertheless, the ideas introduced in Section~\exref{sec:fullder-random} 
 might
 still be of use. Specifically, consider a modification of the
 construction~\eqref{eq:fhard-def} where $\Ups(x)$ is identically zero for
 sufficiently small $x$, say $|x| < 0.05$, while still satisfying %
 Lemma~\ref{lem:props}, thus making the
 non-convex component of $\fhardFO$ insensitive.  As
 explained above, also making the convex quadratic component of 
 $\fhardFO$ insensitive (as
 \citet{WoodworthSr16} do) is unworkable in our setting, as it results in a 
 robust zero-chain 
 equally hard for all high-order algorithms. Instead, we may keep the 
 quadratic component unchanged---and hence sensitive---and try to carry out 
 the proof of Lemma~\exref{lem:fullder-rand-slow}. Doing so,  we see that the 
 inductive argument allows us to ignore the insensitive non-convex 
 component of $\fhardFO$, leaving us to contend only with the (randomly 
 rotated) quadratic chain. Thus, the difficulty here appears closely related to 
 proving a lower bound for randomized first-order methods applied for 
 optimization of convex 
 quadratics. Such lower bounds remain elusive, and we believe finding them 
 is an important open problem.

\section{Concluding remarks}
\label{sec:concluding-remarks}

Here we situate our work in the literature, discuss its implications, and provide a few 
possible extensions.

\subsection{Commentary on our results}\label{sec:conclusion-implications}

In conjunction with known upper bounds, our lower bounds characterize the optimal rates for finding stationary points.
Our lower bounds are sharp to within constant factors for algorithms with full 
derivative information~\cite{NclbPartI},
and (perhaps) slightly loose for
first-order algorithms. These characterizations yield a few insights.

\paragraph{First-order methods vs.\ high-order methods}

For the class $\Fclass{1}$ of $\SmGrad$-smooth functions, first-order
methods---specifically gradient descent---attain the optimal rate
$\SmGrad\DeltaF\epsilon^{-2}$; no higher-order randomized method can 
attain
improved performance over the entire function class. The intuition
here is that $\Fclass{1}$ contains functions whose Hessian and higher
order derivatives may vary arbitrarily sharply, and providing no useful
information for optimization.

When higher-order derivatives are also Lipschitz continuous the picture
changes fundamentally: there is a strict separation between (deterministic)
second-order and first-order methods. In particular, cubic regularization of
Newton's method~\cite{NesterovPo06} achieves $\epsilon$ dependence
$\epsilon^{-3/2}$ for functions with Lipschitz Hessian, while \emph{no}
deterministic first-order method can have better time complexity than
$\epsilon^{-8/5}$, regardless of how many derivatives are Lipschitz.
Note that when the Hessian is Lipschitz, our
definition of first-order algorithms allows for algorithms that rely on
Hessian-vector products, as they can be estimated to arbitrary accuracy in
two gradient evaluations.

\paragraph{The effect of high-order smoothness on first-order 
methods}

For $\FclassTwo{1}{2}$, the class of functions with Lipschitz gradient and
Hessian, our lower bound scales as
$\epsilon^{-12/7}$, while for the class $\FclassThree{1}{2}{3}$ of functions
with Lipschitz third order derivative our ``convex until proven guilty''
method~\cite{CarmonDuHiSi17} achieves the rate
$\epsilon^{-5/3}\log\frac{1}{\epsilon}$. As $\frac{5}{3}<\frac{12}{7}$, this
proves a separation between the optimal rate for first-order methods with
second- and third-order smoothness.

In contrast, orders of smoothness beyond the third offer limited room for
improvement in $\epsilon$ dependence; the lower bound $\epsilon^{-8/5}$
holds for all function classes $\FclassMany{p}$ with $p\ge 3$, while the
method~\cite{CarmonDuHiSi17} does not enjoy improved guarantees with
Lipschitz fourth-order derivatives. The ``robustness'' of the lower bound to
higher-order smoothness stems from the fact that our hard instance
$\fhardFO$ becomes a quartic polynomial in the limit $r\to\infty$, and we
choose $r$ inversely proportional to $\epsilon$. As we discuss in
\cite[Lemma~4]{CarmonDuHiSi17}, our guarantee 
$\epsilon^{-5/3}\log\frac{1}{\epsilon}$ cannot
improve using fourth-order smoothness because of symmetries in the
fourth-order Taylor expansion.
Quartic polynomials thus appear to play a central role in the complexity
of first-order methods for smooth optimization.

\paragraph{Convex vs.\ non-convex functions}

Our results show that convexity makes finding stationary points
fundamentally---and significantly---easier. For first-order methods and
functions with bounded initial sub-optimality, the rate
$\epsilon^{-1}\log\frac{1}{\epsilon}$ is achievable for first-order smooth
convex functions, while the lower bound $\epsilon^{-8/5}$ holds for
non-convex functions with arbitrarily high-order smoothness.  For methods
using higher-order derivatives, our lower bounds~\cite{NclbPartI} for
finding stationary points of non-convex functions are $\epsilon^{-(p+1)/p} \to \epsilon^{-1}$ as the order $p$ of smoothness grows. However,
similar to Appendix~\ref{sec:app-convex-upper-bound},
the results~\cite{ArjevaniShSh17,MonteiroBe13} can show that
for convex functions with Lipschitz Hessian, a second-order method achieves
the strictly better rate $\epsilon^{-6/7}\log \frac{1}{\epsilon}$.

Another striking difference between convex and non-convex functions is the
effect of replacing the bound on the initial function value (\ie
$f(x\ind{0})-\inf_x f(x) \le \DeltaF$) with a bound on the initial distance
to the global minimizer $x\opt$ (\ie $\norm{x\ind{0}-x\opt}\le
D$). For non-convex function classes, we show lower bounds with the same
$\epsilon$ dependence regardless of which type of bound is used. In
contrast, for convex function the optimal rates scale as
$\sqrt{\DeltaF}\epsilon^{-1}$ and $\sqrt{D}\epsilon^{-1/2}$, again
a gap in $\epsilon$ dependence. The rates are not
directly comparable; one can construct families of functions where
$D$ grows with the dimension while $\DeltaF$ remains constant.

Returning to $\DeltaF$-value-bounded function classes, we see one more large
difference between the convex and non-convex case; convex rates scale as
$\sqrt{\DeltaF}$ while all the non-convex rates scale linearly with
$\DeltaF$. This arises from fundamental differences in the
convergence ``mechanism'' for convex and non-convex optimization. The
analysis of non-convex optimization schemes typically~\cite{Nesterov04,
  NesterovPo06, CartisGoTo10, Nesterov12b, CarmonDuHiSi17} revolves around a
\emph{progress argument}, where one shows that, as long as $\norms{\grad f(x\ind{t})}>\epsilon$, the guarantee $f(x\ind{t + 1}) \le
f(x\ind{t}) - p_\epsilon$ holds for some quantity $p_\epsilon$ (\eg for gradient 
descent $p_\epsilon=\epsilon^2/(2\SmGrad)$). The number of iterations to 
find an $\epsilon$-stationary point $x_s$ is therefore at most 
$[f(x\ind{0})-f(x_s)]/p_\epsilon\le \DeltaF/p_\epsilon$, which scales linearly in 
$\DeltaF$.  By our
lower bounds, such progress arguments are, in a sense, optimal.
Conversely, in convex optimization we
may control either the gap $f(x\ind{t}) - f(x\opt)$ or the distance
$\norm{x\ind{t} - x\opt}$, and this interplay (see
Appendix~\ref{sec:app-convex-upper-bound}) allows stronger 
arguments than those  based purely on function progress.

\subsection{Further research}\label{sec:conclusion-future}

\paragraph{Closing the gap in first-order bounds}  

There exists a gap in polynomial $\epsilon$ dependence between our
lower bounds (Theorem~\ref{thm:final}) and the best known upper
bounds~\cite{CarmonDuHiSi17} for first-order methods with higher-order
smoothness.
We do not believe the upper bounds of~\cite{CarmonDuHiSi17} are improvable
by different analysis or by any algorithmic change that maintains the
general structure of alternating between accelerated gradient descent and
negative curvature exploitation. In conjunction with our arguments in
Section~\ref{sec:disc-tightness} about the structure of our lower bounds,
resolution of the optimal rate will likely provide either a method with a
substantially different approach to accelerating gradient descent in the
smooth non-convex setting or a new lower bound construction. 

\paragraph{Finite sum and stochastic problems}
Smooth, non-convex, finite-sum and stochastic optimization problems are
important, arising (for example) in the training of neural
networks. This motivates the design and analysis of efficient methods for
finding stationary points in such problems, and researchers have
successfully developed variance reduction and acceleration techniques for
these settings~\cite{ReddiHeSrPoSm16,AllenHa16,LeiJuChJo17,Allen17b}.
However, no
corresponding lower bounds are available. 
\citet{WoodworthSr16}, show how
to establish lower bounds for \emph{convex} finite sum problems. Combined
with the developments in our paper, we believe their techniques should
extend to finding stationary points of non-convex problems. An important
conclusion of~\cite{WoodworthSr16} is that randomized selection of the
component function is crucial to efficient convergence: in contrast to our
results, they show a separation between deterministic and randomized finite
sum complexity.

\paragraph{Second-order stationary points}
Approximate stationary points are not always close to local minima, and so it is interesting to consider stronger convergence guarantees. Second-order stationarity (also known as the second-order necessary
condition for local optimality) is the most popular example; for a function
$f$, a point $x$ is $(\epsilon_1, \epsilon_2)$-second-order stationary if
$\norm{\grad f(x)} \le \epsilon_1$ and $\deriv{2}f(x) \succeq -\epsilon_2
I$.  Efficient first-order methods for finding second-order stationary points
exist~\cite{AgarwalAlBuHaMa17,CarmonDuHiSi16,JinGeNeKaJo17}. Moreover, it is
possible to generically transform methods for finding $\epsilon$-stationary
points into methods that find $(\epsilon, O(\epsilon^s))$-second-order
stationary points, for some $0<s<1$, without changing the $\epsilon$
dependence of the complexity~\cite[Appendix~C]{CarmonDuHiSi17},
but such modifications introduce dependence logarithmic in the problem 
dimension $d$.
 
Clearly, lower bounds for finding $\epsilon_1$-stationary points also
apply to finding $(\epsilon_1, \epsilon_2)$-second-order stationary
points. However, attaining second-order stationarity with {first-order} methods
is fundamentally more difficult than attaining only stationarity.
There are no dimension-free guarantees: the results
of~\citet{SimchowitzAlRe17} imply $\Omega(\log d)$ dimension dependence for
all randomized first-order algorithms that escape saddle points. Moreover,
for deterministic first-order algorithms it is easy to construct a resisting
oracle that forces $\Omega(d)$ dimension dependence (consider $f(R
x)$ with $f(x) = -x_{1}^2$ and adversarially chosen rotation $R$), implying
strong separation between deterministic and randomized first-order methods
for finding second-order stationary points. It will
be interesting to investigate such issues further.

\section*{Acknowledgments}
OH was supported by the PACCAR INC fellowship. YC and JCD were partially
supported by the SAIL-Toyota Center for AI Research, NSF-CAREER award
1553086, and a Sloan Foundation Fellowship in Mathematics. YC was partially
supported by the Stanford Graduate Fellowship and the Numerical Technologies
Fellowship.

\bibliographystyle{abbrvnat}
\bibliography{bib,nclb}

\newpage
\appendix
\section{Additional results for convex functions}\label{sec:app-convex}

\subsection{An upper bound for finding stationary points of value-bounded functions}\label{sec:app-convex-upper-bound}

Here we give a first-order method that finds $\epsilon$-stationary points of a 
function $f\in\ConvClass$ in $O(\sqrt{\SmGrad 
\DeltaF}\epsilon^{-1}\log\frac{\SmGrad\DeltaF}{\epsilon^2})$ iterations. 
The method consists of Nesterov's accelerated gradient descent (AGD) 
applied on the sum of $f$ and a standard quadratic regularizer.

Our starting point is AGD for
\emph{strongly convex} functions; a function $f$ is $\StrConv$-strongly
convex if
\begin{equation*}
f(y) \ge f(x) + \inner{\grad f(x)}{y-x} + \frac{\StrConv}{2}\norm{y-x}^2,
\end{equation*}
for every $x,y$ in the domain of $f$. Let $\agd_{\StrConv, \SmGrad}\in \AlgZRFO\cap \AlgDetFO$ be the accelerated gradient scheme developed in \cite[\S 2.2.1]{Nesterov04} for $\sigma$-strongly convex functions with $\SmGrad$-Lipschitz gradient, initialized at $x\ind{1}=0$ (the exact step size scheme is not important). Adapting \cite[Thm. 2.2.2]{Nesterov04} to our notation, for any  $\StrConv$-strongly-convex $f$ with $\SmGrad$-Lipschitz gradient and (unique) global minimizer $x^\star_f$, the iterates $\{x\ind{t}\}_{t\in\N} = \agd_{\StrConv, \SmGrad}[f]$ of AGD satisfy
\begin{equation}\label{eq:agd-guarantee-base}
f(x\ind{t}) - f(x_f^\star) \le (1-\sqrt{\StrConv/\SmGrad})^{t-1} \SmGrad\norm{x_f^\star}^2
~~\mbox{and}~~
f(x\ind{t}) \le f(0)
~~\mbox{for every}~t\in\N.
\end{equation}
Moreover, for any such $f$ we have $\norm{\grad f(x)}^2 \le 2\SmGrad(f(x)-f(x^\star_f))$ \cite[Eq. (9.14)]{BoydVa04}, and consequently
\begin{equation*}
\norms{\grad f(x\ind{t})} \le (1-\sqrt{\StrConv/\SmGrad})^{(t-1)/2} \SmGrad\norm{x_f^\star}~~\mbox{for every}~t\in\N.
\end{equation*} 
Using our complexity notation, we may rewrite this as
\begin{equation}\label{eq:agd-guarantee}
\TimeEps{\agd_{\StrConv, \SmGrad}}{f} \le 
1+ 2\sqrt{\frac{\SmGrad}{\sigma}}\log_+\left(\frac{\SmGrad \norms{x_f^\star}}{\epsilon}\right),
\end{equation}
with $\log_+(x) \defeq \max\{0, \log x\}$. 

Now suppose that $f$ is convex with $\SmGrad$-Lipschitz gradient but not 
necessarily strongly-convex. We can add strong convexity to $f$ by means of 
a proximal term; for any $\sigma>0$, the function
\begin{equation*}
f_\StrConv(x)\defeq f(x) + \frac{\StrConv}{2}\norm{x}^2
\end{equation*}
is $\StrConv$-strongly-convex with $(\SmGrad+\StrConv)$-Lipschitz gradient. With this in mind, we define a proximal version of AGD as follows,
\begin{equation*}
\pagd_{\StrConv, \SmGrad}[f] \defeq \agd_{\StrConv, \SmGrad+\StrConv}[f_\StrConv] = \agd_{\StrConv, \SmGrad+\StrConv}\left[f(\cdot) + \frac{\StrConv}{2}\norm{\cdot}^2\right].
\end{equation*}
With a careful choice of $\sigma$, $\pagd_{\StrConv, \SmGrad}$ achieves the desired upper bound.

\begin{proposition}\label{prop:complexity-stationary-convex}
	Let $\DeltaF,\SmGrad$ and $\epsilon$ be positive, and let $\StrConv = \frac{\epsilon^2}{3\DeltaF}$. Then, algorithm $\pagd_{\StrConv, \SmGrad} \in \AlgDetFO$ satisfies
	\begin{equation*}
	\CompEps{\AlgDetFO}{\ConvClass} \le 
	\sup_{f\in\ConvClass} \TimeEps{\pagd_{\StrConv, \SmGrad}}{f} \le 1+5\frac{\sqrt{\SmGrad\DeltaF}}{\epsilon}\log_+ \left( \frac{25\SmGrad\DeltaF}{\epsilon^2}\right).
	\end{equation*}
\end{proposition}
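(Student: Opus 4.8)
The plan is to apply accelerated gradient descent to the strongly convex surrogate $f_\StrConv(x) = f(x) + \tfrac{\StrConv}{2}\norm{x}^2$ and then convert the function-value guarantee~\eqref{eq:agd-guarantee-base} into a gradient bound for $f$ itself, carefully controlling the proximal perturbation, since $\grad f(x\ind{t}) = \grad f_\StrConv(x\ind{t}) - \StrConv x\ind{t}$ and the surrogate's gradient is the only quantity the algorithm directly drives to zero.

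\emph{Bounding the surrogate minimizer.} Write $x^\star_{f_\StrConv}$ for the (unique) minimizer of the $\StrConv$-strongly-convex, $(\SmGrad+\StrConv)$-smooth function $f_\StrConv$. Strong convexity at $x^\star_{f_\StrConv}$ gives $\tfrac{\StrConv}{2}\norm{x^\star_{f_\StrConv}}^2 \le f_\StrConv(0) - f_\StrConv(x^\star_{f_\StrConv})$; since $f_\StrConv(0) = f(0)$ and $f_\StrConv(x^\star_{f_\StrConv}) \ge \inf_x f(x) \ge f(0) - \DeltaF$, this yields $\norm{x^\star_{f_\StrConv}} \le \sqrt{2\DeltaF/\StrConv}$. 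With the choice $\StrConv = \epsilon^2/(3\DeltaF)$ this gives the key margin inequality $\StrConv\norm{x^\star_{f_\StrConv}} \le \sqrt{2\StrConv\DeltaF} = \sqrt{2/3}\,\epsilon$, which is strictly below $\epsilon$ --- the role of the constant $3$ is precisely to leave a fixed fraction of $\epsilon$ of slack.

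\emph{Gradient decomposition and decay.} For the iterates $\{x\ind{t}\} = \pagd_{\StrConv,\SmGrad}[f] = \agd_{\StrConv,\SmGrad+\StrConv}[f_\StrConv]$, I would write
\begin{equation*}
  \norm{\grad f(x\ind{t})} \le \norm{\grad f_\StrConv(x\ind{t})} + \StrConv\norm{x^\star_{f_\StrConv}} + \StrConv\norm{x\ind{t} - x^\star_{f_\StrConv}},
\end{equation*}
and control the first and third terms through $G_t \defeq f_\StrConv(x\ind{t}) - f_\StrConv(x^\star_{f_\StrConv})$: smoothness gives $\norm{\grad f_\StrConv(x\ind{t})}^2 \le 2(\SmGrad+\StrConv)G_t$ (as in \cite[Eq. (9.14)]{BoydVa04}) and strong convexity gives $\tfrac{\StrConv}{2}\norm{x\ind{t} - x^\star_{f_\StrConv}}^2 \le G_t$, so their sum is at most $4(\SmGrad+\StrConv)\sqrt{\DeltaF/\StrConv}\,(1-\sqrt{\StrConv/(\SmGrad+\StrConv)})^{(t-1)/2}$ once we insert the AGD rate $G_t \le (1-\sqrt{\StrConv/(\SmGrad+\StrConv)})^{t-1}(\SmGrad+\StrConv)\norm{x^\star_{f_\StrConv}}^2$ from~\eqref{eq:agd-guarantee-base} together with the bound on $\norm{x^\star_{f_\StrConv}}$.

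\emph{Assembling the iteration count.} Using $1-x\le e^{-x}$ to turn the contraction factor into an exponential, it suffices to take $t$ so that the displayed geometric term drops below $(1-\sqrt{2/3})\,\epsilon$, which together with the margin inequality forces $\norm{\grad f(x\ind t)} \le \epsilon$; this holds as soon as $t \ge 1 + 2\sqrt{(\SmGrad+\StrConv)/\StrConv}\,\log\!\big(\tfrac{(\SmGrad+\StrConv)\sqrt{\DeltaF/\StrConv}}{\epsilon}\big)$. Substituting $\StrConv = \epsilon^2/(3\DeltaF)$ (so that $\sqrt{\DeltaF/\StrConv} = \sqrt3\,\DeltaF/\epsilon$ and $\sqrt{(\SmGrad+\StrConv)/\StrConv} \le 1 + \sqrt{3\SmGrad\DeltaF}/\epsilon$) and absorbing the resulting numerical factors yields the claimed bound $1 + 5\sqrt{\SmGrad\DeltaF}\,\epsilon^{-1}\log_+(25\SmGrad\DeltaF/\epsilon^2)$; when $\epsilon^2$ is so large that $\StrConv \ge \SmGrad$ (equivalently, $x\ind 1 = 0$ is already near-stationary since $\norm{\grad f(0)}^2 \le 2\SmGrad\DeltaF$) the stated bound holds trivially. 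Finally $\pagd_{\StrConv,\SmGrad}\in\AlgDetFO$ because adding the fixed quadratic $\tfrac{\StrConv}{2}\norm{\cdot}^2$ is a deterministic first-order operation and $\agd_{\StrConv,\SmGrad+\StrConv}\in\AlgDetFO$.

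The main obstacle is exactly the stray term $\StrConv x\ind{t}$: the algorithm never sees $\grad f$, only $\grad f_\StrConv$, and $\StrConv x\ind t$ does not vanish --- its limiting magnitude is the constant multiple $\sqrt{2/3}\,\epsilon$ of $\epsilon$. The whole argument rests on the choice $\StrConv = \epsilon^2/(3\DeltaF)$ keeping this limiting error strictly below $\epsilon$, so that the genuinely decaying part of the gradient can be pushed into the leftover $(1-\sqrt{2/3})\epsilon$; once that is arranged, the remaining work is routine constant-chasing to land on the stated $5$ and $25$.
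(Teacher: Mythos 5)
Your strategy is the same as the paper's: regularize to $f_\StrConv = f + \tfrac{\StrConv}{2}\norm{\cdot}^2$, run strongly convex AGD, bound $\norm{x^\star_{f_\StrConv}}\le\sqrt{2\DeltaF/\StrConv}$, and use $\grad f = \grad f_\StrConv - \StrConv\,(\cdot)$ with $\StrConv=\epsilon^2/(3\DeltaF)$ so that the non-decaying perturbation is $\sqrt{2\StrConv\DeltaF}=\sqrt{2/3}\,\epsilon<\epsilon$. The one structural difference is how you control $\StrConv\norm{x\ind{t}}$: you split it as $\StrConv\norm{x^\star_{f_\StrConv}}+\StrConv\norm{x\ind{t}-x^\star_{f_\StrConv}}$ and drive the second term to zero via strong convexity, whereas the paper uses the monotonicity guarantee $f_\StrConv(x\ind{t})\le f_\StrConv(0)$ in~\eqref{eq:agd-guarantee-base} to get $\norm{x\ind{t}}\le\sqrt{2\DeltaF/\StrConv}$ outright, leaving only $\norm{\grad f_\StrConv(x\ind{t})}\le\epsilon/6$ to be earned by the geometric decay. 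Both routes are valid, but yours pays for the extra decaying term: the quantity that must fall below the residual slack $(1-\sqrt{2/3})\epsilon\approx 0.18\epsilon$ carries a factor $4/(1-\sqrt{2/3})\approx 22$ rather than the paper's $6$, and your relaxation $\sqrt{(\SmGrad+\StrConv)/\StrConv}\le 1+\sqrt{3\SmGrad\DeltaF}/\epsilon$ gives a prefactor of about $6.3\sqrt{\SmGrad\DeltaF}/\epsilon$ rather than $2\sqrt{1+3X}\le 2\sqrt{5X}\le 5\sqrt{X}$ (valid once $X\defeq\SmGrad\DeltaF/\epsilon^2\ge 1/2$, the nontrivial regime you correctly isolate). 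So ``absorbing the resulting numerical factors'' does not actually land on the stated $5$ and $25$ — you prove the proposition with constants roughly $6.3$ and $63$. To recover the stated constants you would either adopt the paper's descent-based bound on $\norm{x\ind{t}}$ or tighten your estimates accordingly; everything else in your argument, including the trivial case $\epsilon^2\ge 2\SmGrad\DeltaF$, is sound.
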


\begin{proof}
  For any $f\in\ConvClass$, recall that $f_\StrConv(x) \defeq f(x)
  +\frac{\sigma}{2}\norm{x}^2$ and let
  \begin{equation*}
    \{x\ind{t}\}_{t\in\N} = \pagd_{\StrConv, \SmGrad}[f] = \agd_{\StrConv, \SmGrad+\StrConv}[f_\sigma]
  \end{equation*}
  be the sequence of iterates $\pagd_{\StrConv, \SmGrad}$ produces on
  $f$. Then by guarantee~\eqref{eq:agd-guarantee}, we have
  \begin{equation}\label{eq:app-convex-proxeps}
    \norm{\grad f_\sigma(x\ind{T})} \le \epsilon/6
  \end{equation}
  for some $T$ such that
  \begin{equation}\label{eq:app-convex-tdef}
    T \le  
    1+ 2\sqrt{1+\frac{\SmGrad}{\sigma}}\log_+\left(\frac{6(\SmGrad+\StrConv) \norms{x_{f_\sigma}^\star}}{\epsilon}\right).
  \end{equation}
  For any point $y$ such that $f_\StrConv(y) = f(y) + \frac{\sigma}{2}
  \norm{y}^2 \le f_\StrConv(0)=f(0)$, we have
  \begin{equation*}
    \norm{y}^2 \le \frac{2(f(0)-f(y))}{\sigma} \le \frac{2(f(0)-\inf_x f(x))}{\sigma} \le \frac{2\Delta}{\StrConv}.
  \end{equation*}
  Clearly, $f_\sigma(x_{f_\sigma}^\star) \le f_\sigma(0)$ and by
  guarantee~\eqref{eq:agd-guarantee-base} we also have $f_\sigma(x\ind{T})
  \le f_\sigma(0)$. Consequently,
  \begin{equation}\label{eq:app-convex-normeps}
    \max\left\{\norms{x\ind{T}}, \norms{x_{f_\sigma}^\star}
    \right\}\le \sqrt{\frac{2\DeltaF}{\StrConv}},
  \end{equation}
  and so
  \begin{equation*}
    \norms{\grad f(x\ind{T})}
    =\norms{\grad f_\sigma(x\ind{T})-\sigma \cdot x\ind{T}}
    \le  \norms{\grad f_\sigma(x\ind{T})} + \sigma \norms{x\ind{T}}
    \stackrel{(i)}{\le} \frac{\epsilon}{6} + \sqrt{2\StrConv\DeltaF}
    \stackrel{(ii)}{\le} \epsilon.
  \end{equation*}
  In inequality $(i)$ we substituted bounds~\eqref{eq:app-convex-proxeps}
  and~\eqref{eq:app-convex-normeps}, and in $(ii)$ we used $\sigma =
  \epsilon^2/(3\Delta)$. We conclude that
  $\TimeEps{\pagd_{\StrConv, \SmGrad}}{f} \le T$, and
  substituting~\eqref{eq:app-convex-normeps} and the definition of $\sigma$
  into~\eqref{eq:app-convex-tdef} we have
  \begin{equation*}
    T \le  
    1+ 2\sqrt{1+\frac{3\SmGrad\DeltaF}{\epsilon^2}}\log_+\left(6\sqrt{\frac{2}{3}}+\frac{6\sqrt{6}\SmGrad \DeltaF}{\epsilon^2}\right).
  \end{equation*}
  Without loss of generality, we may assume $\frac{2 \SmGrad
    \DeltaF}{\epsilon^2} \ge 1$, as otherwise
  $\TimeEps{\pagd_{\StrConv, \SmGrad}}{f} = 1$.
  We thus simplify the expression slightly to obtain the proposition.
\end{proof}

\subsection{The impossibility of approximate optimality without a 
bounded domain}
\label{sec:convex-takes-forever}

\begin{lemma}
  \label{lemma:convex-takes-forever}
  Let $\SmGrad,\DeltaF>0$ and $\epsilon < \DeltaF$. For any first-order
  algorithm $\alg\in\AlgDetFO \cup \AlgZRFO$ and any $\T\in\N$, there exists
  a function $f\in\QuadClass$ such that the iterates $\{x\ind{t}\}_{t \in
    \N} = \alg[f]$ satisfy
  \begin{equation*}
    \inf_{t \in \N} \left\{t \mid f(x\ind{t}) \le \inf_x f(x) + \epsilon
    \right\} > T.
  \end{equation*}
\end{lemma}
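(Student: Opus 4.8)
The plan is to recycle Nesterov's hard convex quadratic $\fhardConv$ on $\R^\T$ (see~\eqref{eq:fhard-conv-def}), rescaled to lie in $\QuadClass$, and to exploit that a first-order zero-respecting method started at $0$ cannot leave the hyperplane $\{x_\T = 0\}$ within $\T$ steps (Observation~\ref{obs:recap-zero-chain}) while on that hyperplane the value of $\fhardConv$ stays bounded away from $\inf_x \fhardConv(x) = 0$. The key estimate is the suboptimality analogue of Lemma~\ref{lem:convex-props}.\ref{item:convex-hard}: for every $x \in \R^\T$ with $x_\T = 0$,
\begin{equation*}
  \fhardConv(x) - \inf_z \fhardConv(z) = \fhardConv(x) \ge \frac{1}{2\left(\T - 1 + 1/\alpha\right)} .
\end{equation*}
This follows from the telescoping identity $1 = (1-x_1) + \sum_{i=1}^{\T-1}(x_i - x_{i+1})$ (using $x_\T=0$) followed by the weighted Cauchy--Schwarz inequality applied to these $\T$ terms against the coefficients $\tfrac{\alpha}{2}, \tfrac12, \dots, \tfrac12$ that weight the corresponding squares in $\fhardConv$; recall $\inf \fhardConv = 0$ is attained at $\ones$ by Lemma~\ref{lem:convex-props}.

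Next I rescale: put $f(x) \defeq \lambda\sigma^2 \fhardConv(x/\sigma)$ with $\lambda = \SmGrad/4$. By Lemma~\ref{lem:convex-props}, $f$ is a convex quadratic on $\R^\T$, a first-order zero-chain (positive rescalings preserve supports of iterates and gradients), has $\SmGrad$-Lipschitz gradient, satisfies $f(0) - \inf_x f(x) = \lambda\sigma^2\alpha/2$, and, by the estimate above, $f(x) - \inf_x f(x) \ge \lambda\sigma^2 / \bigl(2(\T - 1 + 1/\alpha)\bigr)$ whenever $x_\T = 0$. Membership $f \in \QuadClass$ demands $\lambda\sigma^2\alpha/2 \le \DeltaF$, whereas the argument needs $\lambda\sigma^2 / \bigl(2(\T - 1 + 1/\alpha)\bigr) > \epsilon$; a common $\sigma$ exists precisely when $\epsilon\bigl(\alpha(\T-1)+1\bigr) < \DeltaF$. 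Since $\epsilon < \DeltaF$, taking $\alpha = \min\{1,\,(\DeltaF-\epsilon)/(2\epsilon(\T-1))\} \in (0,1]$ (and $\alpha=1$ if $\T=1$) together with $\sigma^2 = 8\DeltaF/(\SmGrad\alpha)$ gives $f(0)-\inf_x f(x) = \DeltaF$ and $f(x)-\inf_x f(x) \ge \DeltaF/(\alpha(\T-1)+1) > \epsilon$ for all $x$ with $x_\T=0$. This is the one place $\epsilon < \DeltaF$ is used, and the mechanism is the expected one: $\alpha \to 0$ pushes the minimizer $\sigma\ones$ (of norm $\sqrt{8\DeltaF\T/(\SmGrad\alpha)}$) arbitrarily far from the origin while keeping $\SmGrad$ and $f(0)-\inf_x f(x)$ fixed, and with no bounded-domain constraint nothing prevents this.

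Finally, for $\alg \in \AlgZRFO$ the iterates satisfy $x\ind{1}=0$ and, by Observation~\ref{obs:recap-zero-chain} in dimension $d=\T$, $x\ind{t}_\T = 0$ for all $t \le \T$, so $f(x\ind{t}) > \inf_x f(x) + \epsilon$ for $t=1,\dots,\T$, i.e.\ $\inf\{t\in\N : f(x\ind{t}) \le \inf_x f(x)+\epsilon\} > \T$. For $\alg\in\AlgDetFO$ I would invoke the resisting-oracle / adversarial-rotation construction underlying Proposition~\ref{prop:recap-det-zr} (in the general form established in Part~I): $\QuadClass$ is orthogonally invariant and the progress measure ``first $t$ with $f(x\ind{t})\le\inf_x f(x)+\epsilon$'' is invariant under the rotations used there, so any deterministic first-order method can be forced to behave like a zero-respecting method on a rotated copy $f_U\in\QuadClass$ and the bound carries over verbatim. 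The only genuine thing to verify is that this general statement applies to the function-value progress measure rather than the $\epsilon$-stationarity one (it does, both being orthogonally invariant); otherwise the argument is routine, the sole calculation being the Cauchy--Schwarz estimate and the compatibility check on $\sigma$.
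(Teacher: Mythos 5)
Your proof is correct, but it takes a genuinely different route from the paper's. The paper does \emph{not} reuse $\fhardConv$; it builds a separate geometric-decay chain $f(x) = \lambda\bigl[(\sigma - \beta x_1)^2 + \sum_{i=1}^{\T-1}(x_i - \beta x_{i+1})^2\bigr]$ with $\beta = 1 - \sqrt{\epsilon/\DeltaF}$, whose minimizer has coordinates $\sigma\beta^{-i}$ growing geometrically, and then runs an induction showing that $f(x)\le\epsilon$ forces every $x_i$ to lie within $\frac{\beta^{-i}}{1-\beta}\sqrt{\epsilon/\lambda}$ of $\sigma\beta^{-i}$, hence to be nonzero --- so $x_\T=0$ is incompatible with $\epsilon$-optimality. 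You instead recycle the Nesterov chain already analyzed in Lemma~\ref{lem:convex-props} and prove the function-value analogue of its part iii via the telescoping identity $1=(1-x_1)+\sum_{i<\T}(x_i-x_{i+1})$ and weighted Cauchy--Schwarz, getting $\fhardConv(x)\ge \tfrac{1}{2}(\T-1+1/\alpha)^{-1}$ on $\{x_\T=0\}$; I checked the weights ($1/\alpha$ on the first term, $1$ on the rest) and the subsequent parameter bookkeeping ($\alpha(\T-1)+1 \le \frac{\DeltaF+\epsilon}{2\epsilon} < \DeltaF/\epsilon$), and they are right, including the $\T=1$ edge case. The trade-off: your argument is shorter and leans on infrastructure the paper already has (Lemma~\ref{lem:convex-props}, Observation~\ref{obs:recap-zero-chain}), and it makes the mechanism transparent --- shrinking $\alpha$ pushes the minimizer $\sigma\ones$ away only polynomially in $\T$ and $1/\alpha$; the paper's construction is self-contained and exhibits the more dramatic phenomenon that the minimizer can be exponentially far, at the cost of a slightly longer induction. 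Your handling of the deterministic case matches the paper's (both defer to the general form of the zero-respecting reduction from Part~I, and you correctly flag that the progress measure being reduced is the orthogonally invariant function-value one rather than the gradient-norm one).
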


\begin{proof}
  By Proposition~\ref{prop:recap-det-zr} it suffices to consider $\alg\in
  \AlgZRFO$ (see additional discussion of the generality of
  Proposition~\ref{prop:recap-det-zr} in Section 
  \exref{sec:anatomy-reduction}). Consider the function $f:\R^\T\to\R$,
  \begin{equation}
    f(x) = \lambda
    \left[(\sigma - \beta x_1)^2
      + \sum_{i = 1}^{\T-1} (x_i - \beta x_{i + 1})^2
      \right],
    \label{eqn:convex-takes-forever}
  \end{equation}
  where $0<\beta < 1$, and we take
  \begin{equation*}
    \lambda \defeq \frac{\SmGrad}{2(1+2\beta+\beta^2)} 
    ~~\mbox{and}~~
    \sigma \defeq \sqrt{\frac{\DeltaF}{\lambda}}. 
  \end{equation*}
  Since $f(x)$ is of the form $\lambda\norm{A x-b}^2$ where 
  $\opnorm{A}\le 1+\beta$, we have $\opnorm{\deriv{2}f(x)}\le 
  2\lambda \opnorm{A}^2$ for every $x\in\R^\T$ and therefore $f$ has $2 
  \lambda(1 + 2 
  \beta + \beta^2)$-Lipschitz gradient. Additionally, $f$ 
  satisfies $\inf_x f(x) = 0$ and $f(0) = \lambda \sigma^2$, ans so the above
  choices of $\lambda$ and $\sigma$ guarantee that $f\in \QuadClass$.
  Moreover, $f$ is a a first-order zero-chain
  (Definition~\ref{def:recap-zero-chain}), and thus for any
  $\alg\in\AlgZRFO$ and $\{x\ind{t}\}_{t \in \N} = \alg[f]$, we have
  $x\ind{t}_{\T} = 0$ for $t\le \T$
  (Observation~\ref{obs:recap-zero-chain}). Therefore, it suffices to show that $f(x) >
  \inf_y f(y) + \epsilon$ whenever $x_T =0$.

  We make the following inductive claim: if
  $f(x) \le \inf_y f(y) + \epsilon = \epsilon$, then
  \begin{equation} \label{eqn:induction-sucky-deltaf}
	\left| x_i - \sigma\beta^{-i} \right| \le  \sum_{j=1}^{i} \beta^{-j} \sqrt{\frac{\epsilon}{\lambda}} < \frac{\beta^{-i}}{1-\beta}\sqrt{\frac{\epsilon}{\lambda}}
  \end{equation}
  for all $i\le\T$. Indeed, each term in the sum~\eqref{eqn:convex-takes-forever} defining
  $f$ is non-negative, so for the base case of the induction $i=1$,
  we have $\lambda(\sigma - \beta x_1)^2 \le \epsilon$, or
  $\left| x_1 - \sigma \beta^{-1}\right| \le \beta^{-1} \sqrt{\epsilon / \lambda}$. For $i<\T$, assuming
  that $x_i$ satisfies the bound~\eqref{eqn:induction-sucky-deltaf},
  we have that $\lambda (x_i - \beta x_{i + 1})^2 \le \epsilon$, which implies
  \begin{flalign*}
    \left | x_{i + 1}-\sigma\beta^{-(i+1)} \right| & \le 
    	\left | x_{i + 1}-\beta^{-1}x_i \right| 
    	+ \beta^{-1}\left | x_i-\sigma\beta^{-i} \right| %
    \le \beta^{-1}\sqrt{\frac{\epsilon}{\lambda}} 
    + \sum_{j=1}^{i} \beta^{-(j+1)} \sqrt{\frac{\epsilon}{\lambda}} =  \sum_{j=1}^{i+1} \beta^{-j} \sqrt{\frac{\epsilon}{\lambda}}, 
  \end{flalign*}
  which is the desired claim~\eqref{eqn:induction-sucky-deltaf} for $x_{i+1}$. 
  
  The bound~\eqref{eqn:induction-sucky-deltaf} implies $x_i\ne 0$ for all $i\le\T$ whenever $\sigma \ge (1-\beta)^{-1}\sqrt{\epsilon/\lambda}$. Therefore, we choose $\beta$ to satisfy $\sigma = (1-\beta)^{-1}\sqrt{\epsilon/\lambda}$, that is
  \begin{equation*}
  \beta \defeq 1- \sqrt{\frac{\epsilon}{\lambda \sigma^2}} = 1- \sqrt{\frac{\epsilon}{\DeltaF}},
  \end{equation*}
  for which $0<\beta<1$ since we assume $\epsilon<\DeltaF$. Thus, we guarantee that when $x_T=0$ we must have $f(x) > \inf_y f(y) + \epsilon$, giving the result.
\end{proof}

\section{Technical results}
\label{sec:proofs}

\subsection{Proof of Lemma~\ref{lem:props}}\label{sec:props-proof}

\lemUpsProps*

\begin{proof}
	Parts~\ref{item:ups-grad-zero} and~\ref{item:ups-quasi-convex}
	are evident from inspection, as
	\begin{equation*}
	\Ups'(x) = 120 \frac{x^2 (x - 1)}{1 + (x/r)^2}.
	\end{equation*}
	To see the part~\ref{item:ups-min-one}, note that $\Ups$ is non-increasing
	for every $x<1$ and non-decreasing for every $x>1$ and therefore $x=1$ is
	its global minimum. That $\Ups(1)=0$ is immediate from its definition,
	and, for every $r$, $\Ups(0) = 120\int_0^1 \frac{t^2(1-t)}{1+(t/r)^2}dt
	\le 120\int_0^1 t^2(1-t)dt = 10$. To see part~\ref{item:ups-large-grad},
	note that $|\Ups'(x)| \ge |\Upsilon_1'(x)|$ for every $r \ge 1$, and a
	calculation shows $|\Upsilon_1'(x)| > 1$ for $x\in \openleft{-\infty}{-0.1}
	\cup [0.1,0.9]$ (see Figure~\ref{fig:construction}).
	
	To see the fifth part of the claim, note that
	\begin{equation*}
	\Ups'(x) = 120r^2(x-1)\left(  1 - \frac{1}{1+(x/r)^2}\right)
	= 120\left[r^2 (x-1) - r^3 \varphi_1(x/r) + r^2 \varphi_2(x/r)\right],
	\end{equation*}
	where the functions $\varphi_1$ and $\varphi_2$ are $\varphi_1(\xi) =
	\xi/(1+\xi^2)$ and $\varphi_2(\xi) = 1/(1+\xi^2)$.  We thus bound the
	derivatives of $\varphi_1$ and $\varphi_2$.  We begin with $\varphi_2$,
	which we can write as the composition $\varphi_2(x) = (h \circ g)(x)$
	where $h(x) = \frac{1}{x}$ and $g(x) = 1 + x^2$.
	Let $\mc{P}_{k,2}$ denote the collection of all partitions of
	$\{1, \ldots, k\}$ where each element of the partition has at most
	$2$ indices. That is, if $P \in \mc{P}_{k,2}$, then
	$P = (S_1, \ldots, S_l)$ for some $l \le k$, the $S_i$ are disjoint,
	$1 \le |S_i| \le 2$, and $\cup_i S_i = [k]$.
	The cardinality $|\mc{P}_{k,2}|$ is the number of matchings in the complete
	graph on $k$ vertices, or the $k$th telephone number,
	which has bound~\cite[Lemma 2]{ChowlaHeMo51}
	\begin{equation*}
	|\mc{P}_{k,2}| \le \exp\left(\frac{k}{2} \log k + k \log 2\right).
	\end{equation*}
	We may then apply Fa\`{a} di Bruno's formula for the chain rule to obtain
	\begin{equation*}
	\varphi_2^{(k)}(x)
	= \sum_{P \in \mc{P}_k}
	h^{(|P|)}(g(x))
	\prod_{S \in P} g^{(|S|)}(x)
	= \sum_{P \in \mc{P}_{k,2}}
	(-1)^{|P|} \frac{(|P| - 1)!}{(1 + x^2)^{|P|}}
	(2x)^{\countset_1(P)} 2^{\countset_2(P)},
	\end{equation*}
	where $\countset_i(P)$ denotes the number of sets in $P$ with precisely
	$i$ elements.  Of course, we have $|x|^{\countset_1(P)} / (1 + x^2)^{|P|}
	\le 1$, and thus 
	\begin{equation*}
	|\varphi_2^{(k)}(x)|
	\le \sum_{P \in \mc{P}_{k,2}}
	(|P| - 1)! 2^{|P|}
	\le |\mc{P}_{k,2}| \cdot  (k - 1)!  \cdot 2^{k}
	\le e^{\frac{3k}{2} \log k + 2k \log 2}.
	\end{equation*}
	The proof of the upper bound on
	$\varphi_1^{(k)}(x)$ is similar ($2\varphi_1(x)=\frac{d}{dx}[(\hat{h}\circ 
	g)(x)]$ with 
	$\hat h(x) = \log x$ and $g$ as defined above), so 
	for every $r \ge 1$ and $p \ge 1$,
	the $p+1$-th derivative of $\Ups$ has the bound
	\begin{align*}
	|\Ups^{(p+1)}(x)|
	& \le 120\left[r^2 \indic{p = 1}
	+ r^{3-p}|\varphi_1^{(p)}(x/r)| + r^{2-p}|\varphi_2^{(p)}(x/r)|
	\right] 
	\le 120 r^{3 - p}e^{\frac{3}{2}\log p  + cp},
	\end{align*}
	where $c < \infty$ is a numerical constant. 
\end{proof}

\subsection{Proof of Lemma~\ref{lem:gradbound}}
\label{sec:gradbound-proof}

\lemFirstorderGradbound*

\begin{proof}
  \begin{figure}
	\centering
	\includegraphics[width=0.48\textwidth]{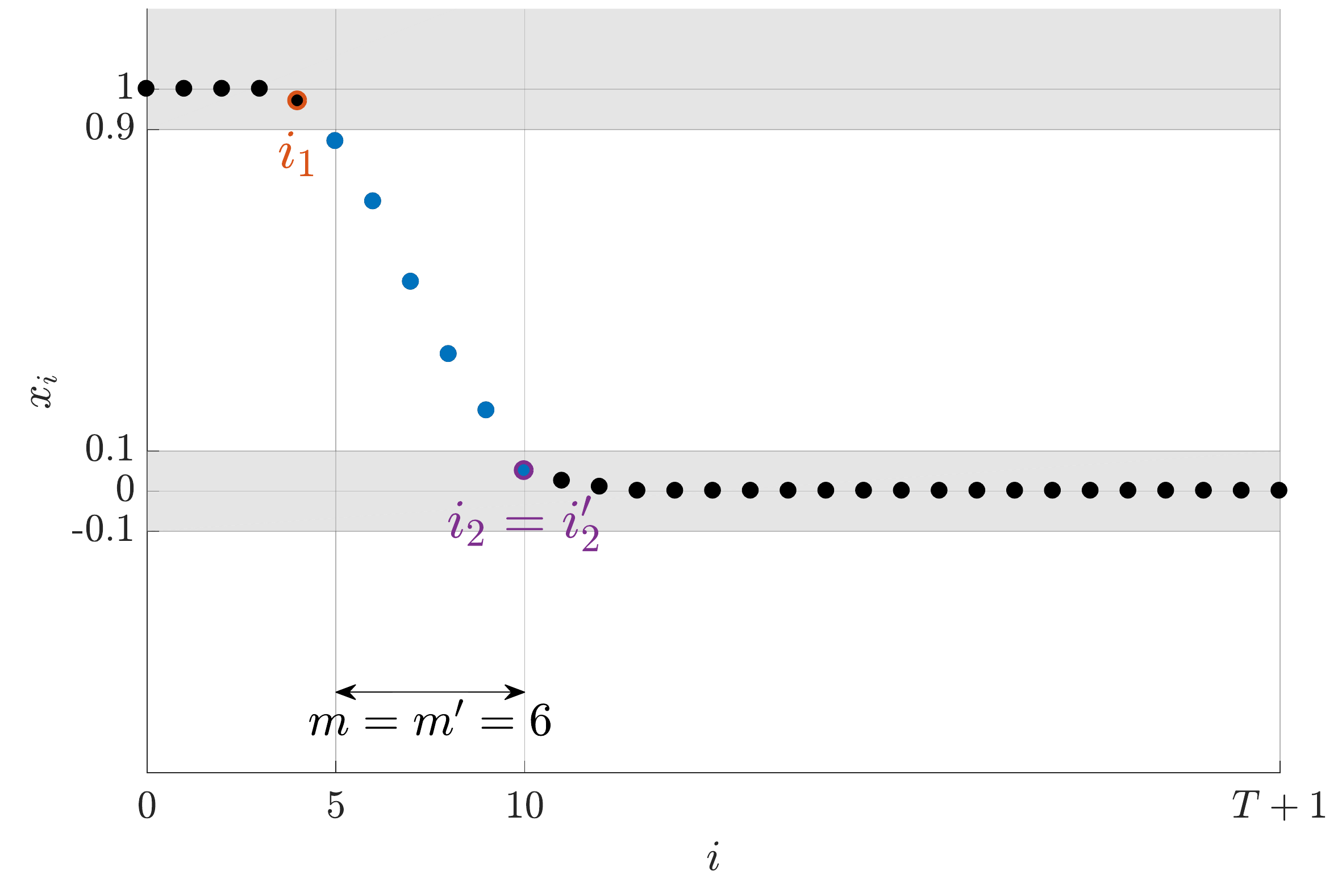} 
	\includegraphics[width=0.48\textwidth]{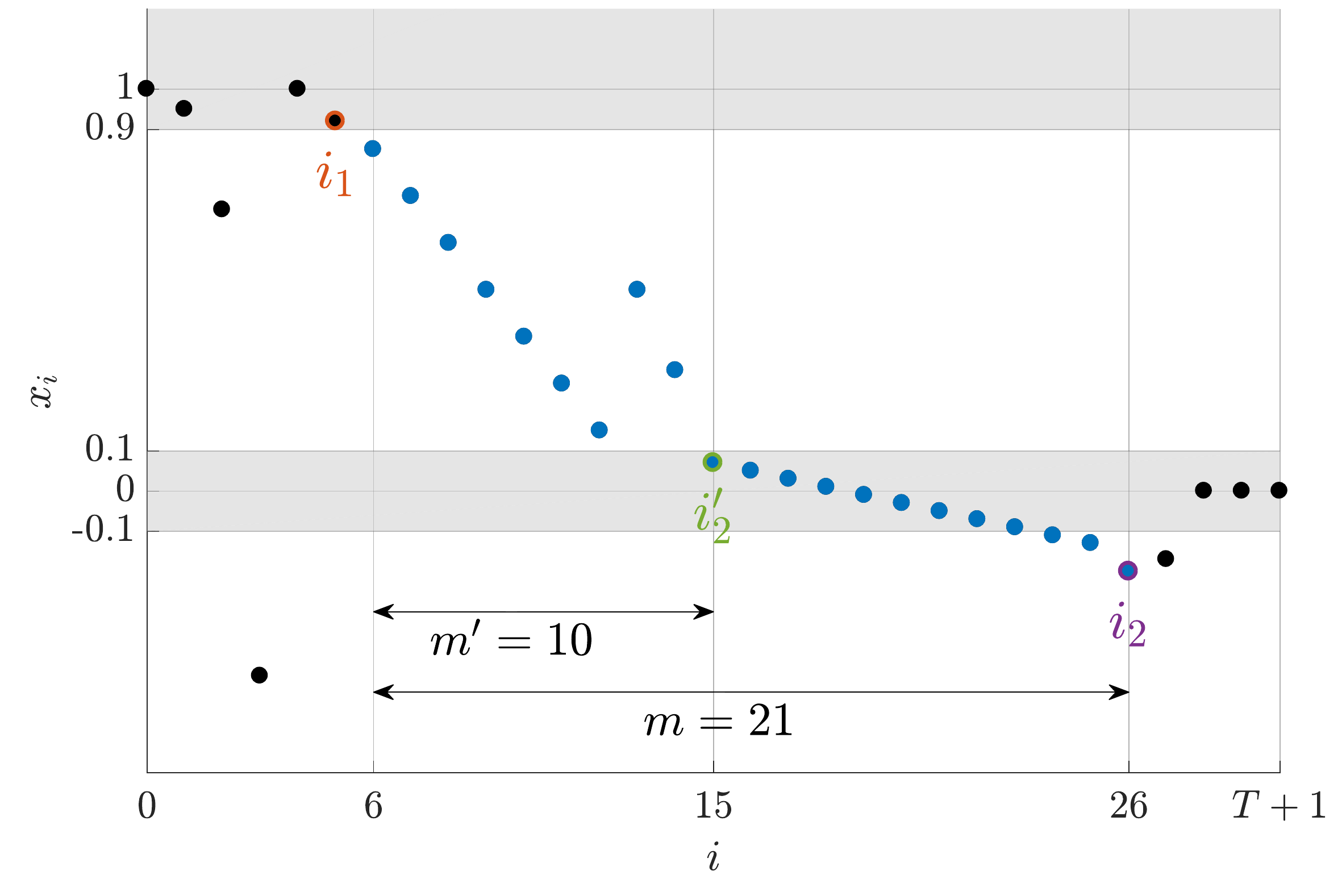} 
	\caption{\label{fig:transition-proof} The transition
		region~\eqref{eqn:trans-region} in the proof of
		Lemma~\ref{lem:gradbound}. Each plot shows the entries of a
		vector $x\in\R^{T+1}$ that satisfies $x_\T = x_{\T+1} = 0$. The
		entries of $x$ belonging to the transition region $\TransRegion$ are
		blue. }
\end{figure}
	
  Throughout the proof, we fix $x\in\R^{\T+1}$ such that $x_{T}=x_{T+1}=0$; for
  convenience in notation, we define $x_{0} \defeq 1$. Our strategy is to carefully pick two indices $\ilo \in \{0, \ldots, \T -1\}$ and $\ihi \in \{\ilo+1, \ldots, \T\}$, such that $\norm{\grad \fhardFO(x)}^2 \ge 
   \sum_{i=\ilo+1}^{\ihi} \left| \grad_i \fhardFO(x) \right|^2 > (\mu^{3/4}/4)^2$.
  We call the set of indices from $\ilo+1$ to $\ihi$ the \emph{transition region}, and construct it as follows.
  \begin{equation*}
    \mbox{Let $\ilo\ge0$ be the largest $i$ such that $x_{i}>0.9$},
  \end{equation*} 
  so that $x_{j}\le0.9$ for every $j>i$. Note that $\ilo=0$ when $x_i \le 0.9$ for every $i\in[\T+1]$. This is a somewhat special case due to the coefficient $\sqrt{\mu}\le 1$ of the first ``link'' in the quadratic chain term in~\eqref{eq:fhard-def}. To handle it cleanly we define
  \begin{equation*}
  \alpha \defeq 
  \begin{cases}
  1 & \ilo > 0 \\
  \sqrt{\mu} & \ilo = 0.
  \end{cases}
  \end{equation*}
  Continuing with construction of the transition region, we make the following definition.
  \begin{equation*}
    \mbox{
      Let $\imi\le\T$ be the smallest
      $j$ such that $j>\ilo$ and $x_{j}<0.1$, }
  \end{equation*}
  and let $m'=\imi-\ilo$, so $m' \ge 1$. Roughly, our transition
  region consists of the $m'$ indices $\ilo+1,\ldots,\imi$, but for technical
  reasons we attach to it the following decreasing 	`tail'.
  \begin{equation*}
    \mbox{Let $\ihi$ be the smallest $k$ such that $k\ge\imi$ and
      $x_{k+1}\ge x_{k}-\frac{0.2}{m'-1+1/\alpha}\indic{x_{k} > -0.1}$.}
  \end{equation*}
  With these definitions, $\ihi$ is well-defined and $0 \le \ilo < \ihi \le
  \T$, since $x_{T+1}-x_{T}=0$. We denote the transition region and 
  associated length by
  \begin{equation}
    \label{eqn:trans-region}
    \TransRegion
    \defeq \left\{ \ilo+1,\ldots,\ihi\right\}
    ~~ \mbox{and} ~~
    m \defeq \ihi-\ilo \ge 1.
  \end{equation}
  We illustrate our definition of the transition region in
  Figure~\ref{fig:transition-proof}.
	  
  Let us describe the transition region. In the ``head'' of the region, we
  have $0.1\le x_{i}\le0.9$ for every $i\in\left\{ \ilo+1,\ldots,\imi-1\right\}
  $; a total of $m'-1$ indices. The ``tail'' of the transition region is
  strictly decreasing, $x_{\ihi}<x_{\ihi-1}<\cdots<x_{\imi}$. Moreover, for any 
  $j \in \{\imi+1, \ldots \ihi-1\}$ such that $x_j > -0.1$, the decrease is rapid; 
  $x_j < x_{j - 1} - 0.2 / (m'-1+1/\alpha)$.
  This
  descriptions leads us to the following technical properties.
  \begin{lemma}
    \label{lemma:funny-transition}
    Let the transition region $\TransRegion$ be defined as
    above~\eqref{eqn:trans-region}.  Then
    \begin{enumerate}[i.]
    \item \label{item:tr-prop-edge}
      $x_{\ilo} > 0.9 > 0.1 > x_{\ihi}$ and
      $-x_{\ihi} + \left(m-1+\alpha^{-1}\right)\left(x_{\ihi+1}-x_{\ihi}\right) > -0.3$.
    \item \label{item:tr-prop-grad}
      $\Ups'\left(x_{i}\right)\le0$ for every
      $i\in\TransRegion$, and $\Ups'\left(x_{i}\right)<-1$ for at least
      $\left(m-\alpha^{-1}\right)/2$ indices in $\TransRegion$.
    \end{enumerate}
  \end{lemma}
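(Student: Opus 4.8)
I would prove both parts by patiently unwinding the definitions of $\ilo$, $\imi$, and $\ihi$; the only non-cosmetic ingredient is a telescoping estimate on the strictly decreasing ``tail'' of the transition region. Throughout, set $\delta \defeq 0.2/(m'-1+\alpha^{-1})$, so that $\ihi$ is by definition the smallest index $k \ge \imi$ with $x_{k+1} \ge x_k - \delta\indic{x_k > -0.1}$. The starting observation is that every $k \in \{\imi,\dots,\ihi-1\}$ \emph{violates} this condition, hence
\begin{equation*}
  x_{k+1} < x_k - \delta\indic{x_k > -0.1} \le x_k \qquad \text{for all } k \in \{\imi,\dots,\ihi-1\},
\end{equation*}
so that the tail $x_{\imi} > x_{\imi+1} > \cdots > x_{\ihi}$ is strictly decreasing and, moreover, drops by more than $\delta$ at every step on which $x_k > -0.1$.

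\noindent\textbf{Part~\ref{item:tr-prop-edge}.} The bound $x_{\ilo} > 0.9$ is immediate from the definition of $\ilo$ (using $x_0 = 1$ when $\ilo = 0$), and $x_{\ihi} < 0.1$ follows from $x_{\imi} < 0.1$ (definition of $\imi$) together with the monotonicity of the tail. For the remaining inequality I would apply the defining condition of $\ihi$ at $k = \ihi$ itself, namely $x_{\ihi+1} \ge x_{\ihi} - \delta\indic{x_{\ihi} > -0.1}$, and split into two cases. If $x_{\ihi} \le -0.1$ then $x_{\ihi+1} - x_{\ihi} \ge 0$ while $-x_{\ihi} \ge 0.1$, so (since $m-1+\alpha^{-1} \ge 1$) the whole expression is at least $0.1 > -0.3$. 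If instead $x_{\ihi} > -0.1$, strict monotonicity forces every entry $x_{\imi},\dots,x_{\ihi}$ to exceed $-0.1$, so each of the $m-m'$ steps from $\imi$ to $\ihi$ drops by more than $\delta$; this gives $x_{\ihi} < x_{\imi} - (m-m')\delta < 0.1 - (m-m')\delta$, and combining with $x_{\ihi+1} - x_{\ihi} \ge -\delta$ and $\delta(m'-1+\alpha^{-1}) = 0.2$,
\begin{align*}
  -x_{\ihi} + (m-1+\alpha^{-1})(x_{\ihi+1}-x_{\ihi})
  &> -0.1 + (m-m')\delta - (m-1+\alpha^{-1})\delta \\
  &= -0.1 - (m'-1+\alpha^{-1})\delta = -0.3 .
\end{align*}

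\noindent\textbf{Part~\ref{item:tr-prop-grad}.} Since every $i \in \TransRegion$ has index exceeding $\ilo$, we have $x_i \le 0.9 < 1$, so $\Ups'(x_i) \le 0$ by Lemma~\ref{lem:props}.\ref{item:ups-quasi-convex}. For the counting statement I would split $\TransRegion$ into the \emph{head} $\{\ilo+1,\dots,\imi-1\}$ (size $m'-1$) and the \emph{tail} $\{\imi,\dots,\ihi\}$ (size $m-m'+1$): every head entry lies in $[0.1,0.9]$ and every tail entry is strictly below $0.1$, so by Lemma~\ref{lem:props}.\ref{item:ups-large-grad} the only indices of $\TransRegion$ that can fail $\Ups'(x_i) < -1$ are tail entries lying in $(-0.1,0.1)$. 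By monotonicity these form an initial block $\{\imi,\dots,\imi+J-1\}$ of the tail; telescoping the per-step drop of more than $\delta$ across its first $J-1$ steps (on each of which $x_k > -0.1$) yields $-0.1 < x_{\imi+J-1} < x_{\imi} - (J-1)\delta < 0.1-(J-1)\delta$, hence $J < m' + \alpha^{-1}$. Since also $J \le m-m'+1$, the bound $\min\{a,b\}\le\tfrac12(a+b)$ gives $J \le \tfrac12(m+1+\alpha^{-1})$, so the number of indices of $\TransRegion$ with $\Ups'(x_i) < -1$ is at least $m-J \ge \tfrac12(m-1-\alpha^{-1})$; I would then recover the stated $\tfrac12(m-\alpha^{-1})$ by tracking the integrality of $J,m,m'$ (and, if a little more room is needed, by exploiting the slack between the thresholds $0.1,0.9$ used to define the transition region and the interval $[0.1,0.9]$ of Lemma~\ref{lem:props}.\ref{item:ups-large-grad}).

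\noindent\textbf{Anticipated obstacle.} The delicate part is the bookkeeping in Part~\ref{item:tr-prop-grad}: one must recognize that the entries of $x$ too small to force a large $|\Ups'|$ occupy a single contiguous block at the start of the tail, and then balance the two competing upper bounds on the length of that block against the guaranteed $m'-1$ ``good'' head entries. Part~\ref{item:tr-prop-edge} is otherwise a routine case split, with the one subtlety that the telescoped lower bound on $-x_{\ihi}$ is precisely what absorbs the possibly large factor $m-1+\alpha^{-1}$ multiplying $x_{\ihi+1}-x_{\ihi}$.
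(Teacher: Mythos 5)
Your proposal is correct and follows essentially the same route as the paper: the same two-case split on whether $x_{\ihi}\le -0.1$ for part~\ref{item:tr-prop-edge}, with the identical telescoping of the per-step drop $\delta=0.2/(m'-1+\alpha^{-1})$ along the tail, and for part~\ref{item:tr-prop-grad} the same head/tail decomposition with the average of the two competing counting bounds. The half-unit shortfall you flag at the end is closed exactly as you anticipate: the paper uses the telescoped strict inequality in the form $J-1<m'-1+\alpha^{-1}$ to assert that at most $m'-1+\alpha^{-1}$ indices of $\TransRegion$ satisfy $|x_i|<0.1$, so the two bounds $N\ge m'-1$ and $N\ge m-(m'-1+\alpha^{-1})$ average to precisely $(m-\alpha^{-1})/2$.
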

  \noindent
  We defer the proof of the lemma to the end of this section,
  continuing the proof assuming it.

  We now lower bound $\norms{\nabla \fhardFO(x)}$.
  For notational convenience,
  define $g_{i}=\mu\Ups'\left(x_{i}\right)$, and recalling that
  $x_{T}=x_{T+1}=0$, we see that the norm
  of the gradient of $\fhardFO$ is
  \begin{flalign}
    \norm{\grad\fhardFO\left(x\right)}^{2}
    & =\left((1+\sqrt{\mu})x_{1}-\sqrt{\mu}-x_2+g_1\right)^{2}+\sum_{i=1}^{\T}\left(2x_{i}-x_{i-1}-x_{i+1}+g_{i}\right)^{2}
    \nonumber \\ &
    \ge \left((1+\alpha)x_{\ilo+1}-\alpha x_{\ilo}-x_{\ilo+2}+g_{\ilo+1}\right)^{2}
    + \sum_{i=\ilo+2}^{\ihi}\left(2x_{i}-x_{i-1}-x_{i+1}+g_{i}\right)^{2},
    \label{eqn:quad-form-to-lower}
  \end{flalign}
  where we made use of the notation $\alpha \defeq 1$ if $\ilo > 0$ and $\alpha \defeq \sqrt{\mu}$ if $\ilo =0$.
  We obtain a lower bound for the final sum of $m$
  squares~\eqref{eqn:quad-form-to-lower} by fixing
  $x_{\ilo}$, $x_{\ihi}$, and $g_{\ilo+1}, \ldots, g_{\ihi}$,
  then minimizing the quadratic form explicitly over the
  $m - 1$ variables $x_{\ilo+1},\ldots,x_{\ihi-1}$.
  We obtain
  \begin{align*}
    \norm{\grad\fhardFO\left(x\right)}^{2}
    & \ge \inf_{v \in \R^{m-1}
    }\Big\{ \left((1+\alpha)v_{1}-\alpha x_{\ilo}-v_{2}+g_{\ilo+1}\right)^{2}
    + \sum_{j=2}^{m-2}\left(2v_{j}-v_{j-1}-v_{j+1} + g_{\ilo + j}
    \right)^{2} \\
    & \qquad\qquad~ ~ +\left(2v_{m-1}-v_{m-2}-x_{\ihi}+g_{\ihi-1}\right)^{2}
    + \left(2x_{\ihi}-v_{m}-x_{\ihi+1}+g_{\ihi}\right)^{2}\Big\}
    \\
    & =\inf_{v\in\R^{m-1}}
    \norm{Av-b}^{2}
    =b^\top \left(I-A\left(A^{\top}A\right)^{-1}A^{\top}\right)b
    =\left(z^{\top}b\right)^{2},
  \end{align*}
  where the matrix $A$ and vector $b$ have definitions
  \[
  A=\left[\begin{array}{ccccc}
      1+\alpha & -1\\
      -1 & 2 & -1\\
      & \ddots & \ddots & \ddots\\
      &  & -1 & 2 & -1\\
      &  &  & -1 & 2\\
      &  &  &  & -1
    \end{array}\right]\in\R^{m\times\left(m-1\right)}
  ~~ \mbox{and} ~~ 
  b=\left[\begin{array}{c}
      \alpha x_{\ilo}-g_{\ilo+1}\\
      -g_{\ilo+2}\\
      \vdots\\
      -g_{\ihi-2}\\
      x_{\ihi}-g_{\ihi-1}\\
      -2x_{\ihi}+x_{\ihi+1}-g_{\ihi}
    \end{array}\right]
  \in \R^{m},
  \]
  and $z \in \R^{m}$ is a unit-norm solution to $A^{\top}z=0$. The
  vector $z \in \R^m$ with
  \begin{equation*}
    z_{j}= \frac{j-1+\frac{1}{\alpha}}{\sqrt{\sum_{i=1}^{m}(i-1+\frac{1}{\alpha})^{2}}}
  \end{equation*}
   is such a solution.
   Thus
  \begin{flalign}
    \lefteqn{\norm{\grad\fhardFO\left(x\right)}^{2}} \nonumber \\
    & \ge\frac{\left(x_{\ilo}-
      \sum_{j=1}^{m}\left(j-1+\frac{1}{\alpha}\right)\cdot g_{\ilo+j}
      + \big(m-2+\frac{1}{\alpha}\big)\cdot x_{\ihi} +\left(m-1+\frac{1}{\alpha}\right)(-2x_{\ihi}+x_{\ihi+1})\right)^{2}}{\sum_{i=1}^{m}(i-1+\frac{1}{\alpha})^{2}}
    \nonumber \\
    &~=\frac{1}{\sum_{i=1}^{m}(i-1+\frac{1}{\alpha})^{2}}
    \bigg(x_{\ilo}-x_{\ihi}+\left(m-1+\frac{1}{\alpha}\right)\left(x_{\ihi+1}-x_{\ihi}\right)
    -\sum_{j=1}^{m}\left(j-1+\frac{1}{\alpha}\right)\cdot g_{\ilo+j}\bigg)^{2}.
    \label{eq:trans-region-gradlb}
  \end{flalign}
  We now bring to bear the properties of the transition region
  Lemma~\ref{lemma:funny-transition} supplies. By
  Lemma~\ref{lemma:funny-transition}.\ref{item:tr-prop-edge},
  \begin{equation}
  x_{\ilo}-x_{\ihi}+(m-1+\alpha^{-1})\left(x_{\ihi+1}-x_{\ihi}\right)\ge0.9-0.3=\frac{3}{5},
  \label{eq:trans-region-x-lb}
  \end{equation}
  and by Lemma~\ref{lemma:funny-transition}.\ref{item:tr-prop-grad},  using 
  $1 \le \alpha^{-1} \le 1/\sqrt{\mu}$,
	\begin{equation}
	-\sum_{j=1}^{m}(j-1+\alpha^{-1}) g_{\ilo+j} \ge \mu\sum_{j=1}^{\left(m-\alpha^{-1}\right)/2}(j-1+\alpha^{-1})\ge 
	\frac{\mu}{8}\left[m^2-\frac{1}{\alpha^2}\right]_+ \ge \frac{1}{8}\left[\mu m^2 - 1\right]_+.
	\label{eq:trans-region-g-lb}
	\end{equation}
  Substituting $\sum_{i=1}^{m}\left(i-1+\alpha^{-1}\right)^{2} \le 
	\half m\left(m+1/\sqrt{\mu}\right)\left(m+2/\sqrt{\mu}\right)$  and the bounds~\eqref{eq:trans-region-x-lb} and~\eqref{eq:trans-region-g-lb} into the gradient lower bound~\eqref{eq:trans-region-gradlb}, 
  we have that
  \begin{equation*}
    \norm{ \grad\fhardFO\left(x\right)} \ge
    \mu^{3/4} \cdot \zeta(m\sqrt{\mu})
    ~~\mbox{where}~~
    \zeta(t) \defeq \sqrt{\frac{2}{t(t+1)(t+2)}}\left(\frac{3}{5}+\frac{1}{8}\left[t^2 - 1\right]_+ \right).
  \end{equation*}
  A quick computation reveals that $\inf_{t > 0}\zeta(t) \approx 0.28 > 1/4$, which gives the result.
\end{proof}

\begin{proof-of-lemma}[\ref{lemma:funny-transition}]
  We have by definition that $x_{\ilo} > 0.9$ and $x_{\ihi} \le x_{\imi} <
  0.1$. To see that 
  \begin{equation*}
  -x_{\ihi} + \left(m-1+\alpha^{-1}\right)\left(x_{\ihi+1}-x_{\ihi}\right) \ge -0.3
  \end{equation*}
  holds,
  consider the two cases that $x_{\ihi} \le -0.1$ or $x_{\ihi} > -0.1$.
  In the first case that $x_{\ihi}\le-0.1$, by definition
  $x_{\ihi+1}\ge x_{\ihi}$ so
  $-x_{\ihi} + \left(m-1+\alpha^{-1}\right)\left(x_{\ihi+1}-x_{\ihi}\right)>0.1>-0.3$.  The second case that
  $x_{\ihi}>-0.1$ is a bit more subtle.
  By definition of the sequence $x_{\ihi}, \ldots, x_{\imi}$, we have
  \begin{equation}
    -0.1 < x_{\ihi} < x_{\ihi - 1}
    - \frac{0.2}{m'-1+\frac{1}{\alpha}} < \cdots \le x_{\imi}
    - \frac{0.2}{m'-1+\frac{1}{\alpha}}(\ihi - \imi) < 0.1 - 0.2\frac{m-m'}{m'-1+\frac{1}{\alpha}}.
    \label{eq:trans-zero-region-inequality-chain}
  \end{equation}
  Combining this bound on $x_{\ihi}$ and the inequality $x_{\ihi + 1} \ge x_{\ihi} - \frac{0.2}{m'-1+1/\alpha}$
  due to the construction of $\ihi$,
  we obtain
  \begin{equation*}
  -x_{\ihi} + \left(m-1+\alpha^{-1}\right)\left(x_{\ihi+1}-x_{\ihi}\right) > -0.1 + 0.2\frac{m-m'}{m'-1+\frac{1}{\alpha}} - 0.2\frac{m-1+\frac{1}{\alpha}}{m'-1+\frac{1}{\alpha}} = -0.3.
  \end{equation*}
  We note for
  the proof of property~\ref{item:tr-prop-grad} that the chain of inequalities~\eqref{eq:trans-zero-region-inequality-chain} is possible only for $m \le 2m'-1+1/\alpha$, which implies there are at most $m'-1+1/\alpha$ indices
  $i \in \TransRegion$ such that $|x_i| < 0.1$.

  The first part of property~\ref{item:tr-prop-grad} follows from
  Lemma~\ref{lem:props}.\ref{item:ups-quasi-convex}, since
  $x_{i}\le 0.9 \le 1$ for every $i\in\TransRegion$.  To see that the
  second part of the property holds, let $N$ be the number of indices in
  $i\in\TransRegion$ for which $\Ups'\left(x_{i}\right)<-1$.  By
  Lemma~\ref{lem:props}.\ref{item:ups-large-grad} and the
  fact that $0.1\le x_{i}\le0.9$ for every $i\in\left\{
  \ilo+1,\ldots,\imi-1\right\} $, $N\ge m'-1$.  Moreover, since there can be
  at most $m'-1+1/\alpha$ indices $i\in\TransRegion$ for which
  $\left|x_{i}\right|<0.1$, $N\ge m-(m'-1+1/\alpha)$. Averaging the two lower bounds
  gives $N\ge\left(m-1/\alpha\right)/2$.
\end{proof-of-lemma}

\subsection{Proof of Theorem~\ref{thm:final-d}}\label{sec:D-lb-proof}

\thmFirstorderFinalDistance*

\begin{proof}
The proof builds off of those of Theorems~\ref{thm:final} and 
\exref{thm:fullder-final-dist}. We begin by recalling the following bump 
function construction
\begin{equation}
  \label{eq:bumpy-bump}
  \hhard(x) \defeq
  \compactfunc\left(1 - \frac{25}{2} \normBig{x -
    \frac{4}{5} e\ind{\T}}^2 \right)
~~\mbox{where}~~\compactfunc(t) \defeq e \cdot \exp\left(-\frac{1}{\hinge{2 t - 1}^2}\right).
\end{equation}
Adding a scaled version of $-\hhard$ to our hard instance construction allows us to ``plant'' a global minimum that is both close to the origin and essentially invisible to zero-respecting method. For convenience, we restate Lemma~\exref{lem:fullder-hhard-props},
\begin{lemma}%
  \label{lem:hhard}
  The function $\hhard$ satisfies the following.
  \begin{enumerate}[i.]
  \item \label{item:hhard-bump} For all
    $x \in \R^\T$ we have $\hhard(x) \in [0, 1]$,
    and $\hhard(0.8 e\ind{\T}) = 1$.
  \item \label{item:hhard-radius} 
    On the set $\{x\in\R^d \mid x_{\T} \le \frac{3}{5} \}
    \cup \{x \mid \norm{x} \ge 1\}$, we have $\hhard(x)=0$.
  \item \label{item:hhard-lipschitz} For every $p \ge 1$, the $p$th order
    derivative of $\hhard$ is $\smCvar{p}$-Lipschitz continuous, where
    $\smCvar{p} \le e^{c p\log p + c}$ for a numerical constant $c <
    \infty$.
  \end{enumerate}
\end{lemma}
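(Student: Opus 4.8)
The plan is to verify the three parts of Lemma~\ref{lem:hhard} in sequence, with only part~(iii) requiring real work. Throughout write $q(x) \defeq 1 - \frac{25}{2}\norm{x - \frac{4}{5} e\ind{\T}}^2$, so $\hhard = \compactfunc \circ q$, and record two elementary facts about the one-dimensional cutoff $\compactfunc(t) = e\exp(-\hinge{2t-1}^{-2})$. First, since $\hinge{2t-1} = 0$ for $t \le \frac{1}{2}$, we have $\compactfunc(t) = 0$ precisely when $t \le \frac{1}{2}$. Second, for $t \in (\frac{1}{2},1]$ we have $0 < 2t-1 \le 1$, hence $\compactfunc(t) \in (0, e\cdot e^{-1}] = (0,1]$, with $\compactfunc(1) = 1$. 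Because $q(x) \le 1$ for every $x$, part~(i) follows at once: $\hhard(x) = \compactfunc(q(x)) \in [0,1]$, and $\hhard(\frac{4}{5} e\ind{\T}) = \compactfunc(1) = 1$. For part~(ii), $\compactfunc(q(x)) = 0$ iff $q(x) \le \frac{1}{2}$ iff $\norm{x - \frac{4}{5} e\ind{\T}} \ge \frac{1}{5}$; and $x_\T \le \frac{3}{5}$ forces $\norm{x - \frac{4}{5} e\ind{\T}} \ge \frac{4}{5} - x_\T \ge \frac{1}{5}$, while $\norm{x} \ge 1$ forces $\norm{x - \frac{4}{5} e\ind{\T}} \ge \norm{x} - \frac{4}{5} \ge \frac{1}{5}$, so $\hhard$ vanishes on the stated set.

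For part~(iii) I would first bound the one-dimensional derivatives of $\compactfunc$. Write $\compactfunc(t) = e\,\psi(2t-1)$ with $\psi(s) = \exp(-\hinge{s}^{-2})$ the classical mollifier building block, which is $\mc{C}^\infty(\R)$ with $\psi$ and all its derivatives identically zero on $(-\infty,0]$; hence $\hhard = \compactfunc \circ q \in \mc{C}^\infty$. For $s > 0$ an induction gives $\psi^{(k)}(s) = R_k(1/s)e^{-1/s^2}$ with $R_0 \equiv 1$ and $R_{k+1}(u) = 2u^3 R_k(u) - u^2 R_k'(u)$, a polynomial of degree $3k$. The sum of absolute coefficients obeys $\norm{R_{k+1}}_1 \le (3k+2)\norm{R_k}_1$, so $\norm{R_k}_1 \le (3k)^k$; together with $\sup_{u>0} u^{3k}e^{-u^2} \le (3k/2)^{3k/2}$ this yields $\sup_{s}|\psi^{(k)}(s)| \lesssim e^{ck\log k + ck}$ for a numerical constant $c$, and therefore $\norm{\compactfunc^{(k)}}_\infty = e\,2^k\norm{\psi^{(k)}}_\infty \lesssim e^{ck\log k + ck}$. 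Note also $\compactfunc^{(k)}(t) = 0$ for every $t \le \frac{1}{2}$ and every $k \ge 0$.

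Finally I would compose along directional projections, matching the paper's definition of a Lipschitz $p$th derivative. Fix $x \in \R^{\T}$ and a unit vector $v$; since $q$ is quadratic, $\phi(t) \defeq q(x+tv)$ satisfies $\phi'(0) = \inner{\nabla q(x)}{v}$ with $|\phi'(0)| \le \norm{\nabla q(x)} = 25\norm{x - \frac{4}{5} e\ind{\T}}$, $\phi''(0) = -25$, and $\phi^{(k)}(0) = 0$ for $k \ge 3$. By Fa\`{a} di Bruno, $(\hhard)_{x,v}^{(m)}(0) = \sum_{\pi} \compactfunc^{(|\pi|)}(\phi(0))\prod_{B\in\pi}\phi^{(|B|)}(0)$, where only partitions $\pi$ of $[m]$ into blocks of size one or two contribute (as $\phi^{(k)}\equiv 0$ for $k\ge3$). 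If $q(x) \le \frac{1}{2}$ every term vanishes; if $q(x) > \frac{1}{2}$ then $\norm{x - \frac{4}{5} e\ind{\T}} < \frac{1}{5}$, so $|\phi'(0)| < 5$ and each factor $|\phi^{(|B|)}(0)| \le 25$, giving $|(\hhard)_{x,v}^{(m)}(0)| \le N_m \cdot 25^m \cdot \max_{1\le j\le m}\norm{\compactfunc^{(j)}}_\infty$, where $N_m$ — the number of partitions of $[m]$ into blocks of size at most two — satisfies $N_m \le \exp(\frac{m}{2}\log m + m\log 2)$~\cite{ChowlaHeMo51}. Combining the bounds and recentering at an arbitrary point (the structure of $\phi$ is the same there) shows $\sup_{x,v,t}|(\hhard)_{x,v}^{(p+1)}(t)| \le e^{cp\log p + c}$ for a numerical constant $c$; hence $(\hhard)_{x,v}^{(p)}$ is $\smCvar{p}$-Lipschitz with $\smCvar{p} \le e^{cp\log p + c}$, i.e. the $p$th derivative of $\hhard$ is $\smCvar{p}$-Lipschitz. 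The only real obstacle is the bookkeeping in this last part: tracking the factorial-type growth of $\compactfunc^{(k)}$ and the matching-type combinatorics of the chain rule, which parallels the proof of Lemma~\ref{lem:props} and absorbs cleanly into the claimed $e^{cp\log p+c}$ bound.
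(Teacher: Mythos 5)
Your proof is correct. One point of comparison is impossible to make precisely: this paper never proves Lemma~\ref{lem:hhard} itself --- it is restated verbatim from Part I of the series (Lemma~\exref{lem:fullder-hhard-props}), so the in-paper ``proof'' is a citation. Your argument is therefore a self-contained verification, and it is sound: parts (i)--(ii) are exactly the intended elementary computations with $\compactfunc$ vanishing for arguments $\le \tfrac12$ and equal to $1$ at $1$; part (iii) follows the same toolkit this paper deploys for Lemma~\ref{lem:props} --- bound the one-dimensional derivatives of the cutoff (your recursion $R_{k+1}(u)=2u^3R_k(u)-u^2R_k'(u)$ and the coefficient/$\sup_u u^{3k}e^{-u^2}$ estimates are fine), then compose along directional projections via Fa\`a di Bruno, where the quadratic inner function kills all blocks of size $\ge 3$ so the telephone-number bound on matchings~\cite{ChowlaHeMo51} controls the combinatorics. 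Two small observations, neither a gap: your intermediate bounds have the shape $e^{cp\log p + cp}$ rather than $e^{cp\log p + c}$, but as you implicitly use, the linear-in-$p$ term absorbs into the $p\log p$ term for $p\ge 2$ (and $p=1$ is a constant), so the stated form of $\smCvar{p}$ holds; and the case split on $q(x)\le\tfrac12$ versus $q(x)>\tfrac12$, which gives you the uniform bound $|\phi'(t)|<5$ on the support, is exactly the step that makes the dimension-free Lipschitz constant possible, and you handle it correctly, including the recentering remark needed because Fa\`a di Bruno is applied at a general point of the line.
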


With this lemma in place, we follow the broad outline of the proof
of Theorem~\ref{thm:final}, with modifications to make sure the norm
of the minimizers of $f$ is small. Indeed,
letting $\lambda, \sigma > 0$, we define
our scaled hard instance $f:\R^{\T + 2}\to\R$ by
\begin{equation}
  f(x) = {\lambda\sigma^2}
  \fhardFO\left(x_1 / \sigma, \ldots, x_{\T + 1} / \sigma\right)
  - {\tilde{\lambda}} \hhardPlus\left({x}/{D}\right),
  \label{eq:hard-distance-instance}
\end{equation}
that is, the hard instance we construct in Theorem~\ref{thm:final} minus a
scaled bump function~\eqref{eq:bumpy-bump}.  For every $p \in \N$, we set
the parameters $\lambda, \sigma, \mu$ and $r$ as in the proof of
Theorem~\ref{thm:final}, so that we satisfy inequality~\eqref{eq:lb-lq}
except we replace $\Sm{q}$ with $\Sm{q}/2$ for every $q\in[p]$ (including in
the definitions of  $\lambda, \sigma, \mu$). Thus, as in
inequality~\eqref{eq:lb-lq}, for each $q \in \N$ the function $f_0(x) \defeq
\lambda\sigma^2\fhardFO\left(x/\sigma\right)$ has $\Sm{q}/2$-Lipschitz $q$th
order derivative and satisfies $\norm{\grad f_0(x)}>\epsilon$ for all
$x\in\R^{\T+1}$ with $x_{\T} = x_{\T+1} = 0$. By
Lemma~\ref{lem:hhard}.\ref{item:hhard-lipschitz},
setting
\begin{equation}
  \tilde{\lambda} = \min_{q\in[p]} \frac{1}{2\smCvar{q}}\Sm{q}D^{q+1}
  \label{eq:lb-d-lambda-tilde}
\end{equation}
guarantees that the function $x \mapsto - \tilde{\lambda}\cdot
\hhardPlus(x/D)$ also has $\Sm{q}/2$-Lipschitz $q$th order derivatives, so
that overall, for each $q \in [p]$ the function $f$ defined in
Eq.~\eqref{eq:hard-distance-instance} has $\Sm{q}$-Lipschitz $q$th order
derivative. 

We note that by Lemma~\ref{lem:hhard}.\ref{item:hhard-radius},
$\hhardPlus(x)$ is identically 0 at a neighborhood of any $x$ with
$x_{\T+2}=0$, which immediate implies that $\hhardPlus$ and $f$ are
zero-chains. Therefore for any $\alg\in\AlgZRFO$ producing iterates
$x\ind{1}=0,x\ind{2},x\ind{3},\ldots$ when operating on $f$, we have 
$x\ind{t}_\T =
x\ind{t}_{\T+1}=x\ind{t}_{\T+2}=0$ for any $t\le\T$. Thus, by our choices of
$\lambda,\sigma,\mu$ and $r$, $\norm{\grad f(x\ind{t})} = \norm{\grad
  f_0(x\ind{t})} > \epsilon$ for every $t\le\T$, and so
\begin{equation*}
  \TimeEps{\AlgZRFO}{\FclassDMany{p}} \ge \inf_{\alg\in\AlgZRFO}
  \TimeEps {\alg}{f} \ge T+1. 
\end{equation*}

To establish that $f\in\FclassDMany{p}$, it remains to show that every global 
minimizer of $f$ has norm at most $D$. Let $x^\star$ denote a global
minimizer of $f$, and temporarily assume that
\begin{equation}\label{eq:distance-assumption}
	f\left( 0.8D\cdot e\ind{\T+2}\right) < 0,
\end{equation}
Therefore, $f(x^\star)< f\left( 0.8D\cdot e\ind{\T+2}\right)<0$ and
$\hhardPlus(x^\star/D) \ne 0$, as otherwise we have the contradiction
$f(x^\star) = \lambda\sigma^2 \fhardFO(x^\star/\sigma) \ge 0$. By the
definition~\eqref{eq:bumpy-bump}, $\hhardPlus(x^\star/D) \ne 0$ implies that
$1-\frac{25}{2}\norm{x^\star/D - 0.8e\ind{T+2}}^2\ge0.5$, and therefore
$\norm{x^\star}\le D$. To verify the assumed
inequality~\eqref{eq:distance-assumption}, we use
Lemma~\ref{lem:hhard}.\ref{item:hhard-bump} to obtain
\begin{equation*}
  f\left( 0.8D \cdot e\ind{\T+2}\right) = {\lambda\sigma^2}\cdot \fhardFO\left(0\right) - {\tilde{\lambda}}\cdot \hhardPlus\left(0.8\cdot e\ind{T+2}\right) = \frac{\lambda\sqrt{\mu}\sigma^2}{2} + 10\lambda\sigma^2\mu\T - {\tilde{\lambda}}.
\end{equation*}
Therefore, if we set
\begin{equation}\label{eq:lb-d-genlb}
  T = \floor{\frac{\tilde{\lambda}-\lambda\sqrt{\mu}\sigma^2/2}{10\lambda\mu\sigma^2}}
\end{equation}
then inequality~\eqref{eq:distance-assumption} holds and $\norm{x^\star}\le
D$, and so $f\in\FclassDMany{p}$.  Comparing the
setting~\eqref{eq:lb-d-genlb} of $T$ above to the setting~\eqref{eq:lb-T} of
$T$ in the proof of Theorem~\ref{thm:final}, we see they are identical
except that we replace the term $\DeltaF$ in~\eqref{eq:lb-T} with
$\tilde{\lambda} \defeq \min_{q\in[p]} ({2\smCvar{q})^{-1}}\Sm{q}D^{q+1}$.
Thus, mimicking the proof of Theorem~\ref{thm:final} after
the step~\eqref{eq:lb-T}, \emph{mutatis mutandis}, yields the result.
\end{proof}

\subsection{Proof of Lemma~\ref{lem:disc-tight}}
\label{sec:proof-disc-tight}

\lemDistTight* 

\begin{proof}
We construct $x$ as follows. We let $x_1 = 1$, and for $n>1$ let (with $x_0 
\defeq 1$),
\begin{equation*}
  x_n = x_{n-1} - (x_{n-2}-x_{n-1}) - \delta_{n-1}   = 1 - \sum_{i=1}^{n-1} \sum_{j=1}^i \delta_i \,,
\end{equation*}
where we take 
\begin{equation*}
  \delta_n = \frac{1}{m(m+1)}
  \begin{cases}
    1 & n \le m \\
    0 & n=m+1 \text{ or } n > 2m +1 \\
    -1 & m+1 < n \le 2m +1
  \end{cases}
\end{equation*}
for some $m\in \N$ which we will later determine. The elements of $\grad \fhardTFO$ are given by
\begin{equation*}
  \grad_n \fhardTFO(x) = \linkfun'(x_n -x_{n-1}) - \linkfun'(x_{n+1} -x_n ) + \mu \bupsilon'(x_n),
\end{equation*}
where for $n=1$ we used $x_1=1$ and $\linkfun'(0)=0$ to write $\alpha \cdot \linkfun'(x_1-1) = 0 = \linkfun'(x_1-1)$. Since $\linkfun'$ is 1-Lipschitz, we have
\begin{equation*}
  \left| \linkfun'(x_n -x_{n-1}) - \linkfun'(x_{n+1} -x_n ) \right| \le 
  \left| (x_n -x_{n-1}) - (x_{n+1} -x_n )\right| = \left| \delta_n \right|.
\end{equation*}
Moreover, one can readily verify that $x_n\in [0,1]$ for every $n$ and that $x_n = 0$ for every $n > 2m+1$. Therefore, using using $\bupsilon'(0) = 0$ and $\max_{z\in [0,1]} | \bupsilon'(z) | \le G$ we have that $\left|\bupsilon'(x_n)\right| \le G\cdot \indic{n \le 2m+1}$, which gives the overall bound
\begin{equation*}
  \left|\grad_n \fhardTFO(x)\right| \le 
  \left| \delta_n \right| + \mu\left|\bupsilon(x_n)\right| \le
  \left( \frac{1}{m^2} + G\mu\right)\indic{n \le 2m+1},
\end{equation*}
and thus,
\begin{equation*}
  \norm{\grad \fhardTFO(x)} \le \sqrt{2m+1}\left(m^{-2} + G\mu\right) \le \sqrt{3}\left(m^{-3/2} + \sqrt{m}G\mu\right).
\end{equation*}
Taking $m = \ceil{\frac{1}{3\sqrt{\mu}}}$, we have
\begin{equation*}
  \norm{\grad \fhardTFO(x)} \le \sqrt{3}\left( \ceil{\frac{1}{3\sqrt{\mu}}}^{-3/2} + G\ceil{\frac{1}{3\sqrt{\mu}}}^{1/2}\mu \right) \le \left( 27 + \sqrt{3}G\right) \mu^{3/4},
\end{equation*}
where we have used $\ceil{1/(3\sqrt{\mu})} \le 1/\sqrt{\mu}$ since $\mu \le 
1$. Thus, $ \norm{\grad \fhardTFO(x)} \le C\mu^{3/4}$ holds for $C=27 + 
\sqrt{3}G$. For $T \ge 8$, since $\mu \ge \T^{-2}$, we have $2m+1 \le 
2\ceil{\T/3}+1 < \T$ and therefore  $x_\T = x_{\T+1}=0$ holds as required 
(since $x_n = 0$ for every $n > 2m+1$). In the edge case $\T \le 8$ we have 
$\mu \ge \T^{-2} \ge 1/64$ and therefore  $x=0$ yields $\norm{\grad 
\fhardTFO(x)} = \alpha \le 1 \le 27\cdot(1/64)^{3/4} \le C\mu^{3/4}$.
\end{proof}

\end{document}